\renewcommand{\baselinestretch}{1.1}
\newcommand{\email}[1]{\href{mailto:#1}{\nolinkurl{#1}}}
\definecolor{labelkey}{rgb}{0,0.08,0.45}
\definecolor{refkey}{rgb}{0,0.6,0.0}
\definecolor{Brown}{rgb}{0.45,0.0,0.05}
\definecolor{dgreen}{rgb}{0.00,0.49,0.00}
\definecolor{dblue}{rgb}{0,0.08,0.75}
\renewcommand{\leq}{\ensuremath{\leqslant}}
\renewcommand{\geq}{\ensuremath{\geqslant}}
\renewcommand{\le}{\ensuremath{\leqslant}}
\renewcommand{\ge}{\ensuremath{\geqslant}}
\newcommand{\minimize}[2]{\ensuremath{\underset{\substack{{#1}}}%
{\text{\rm minimize}}\;\;#2 }}
\newcommand{\EC}[2]{{\mathsf E}(#1\! \mid\! #2)}
\newcommand{\scal}[2]{{\left\langle{{#1}\mid{#2}}\right\rangle}}
\newcommand{\menge}[2]{\big\{{#1}~\big |~{#2}\big\}} 
\newcommand{\HHH}{{\ensuremath{\boldsymbol{\mathsf H}}}}
\newcommand{\GGG}{{\ensuremath{\boldsymbol{\mathsf G}}}}
\newcommand{\KKK}{{\ensuremath{\boldsymbol{\mathsf K}}}}
\newcommand{\HH}{\ensuremath{{\mathsf H}}}
\newcommand{\GG}{\ensuremath{{\mathsf G}}}
\newcommand{\KK}{\ensuremath{{\mathsf K}}}
\newcommand{\FF}{\ensuremath{{\EuScript F}}}
\newcommand{\XX}{\ensuremath{\EuScript{X}}}
\newcommand{\ZZZ}{\ensuremath{\EuScript{Z}}}
\newcommand{\XXX}{\ensuremath{\boldsymbol{\EuScript{X}}}}
\newcommand{\Sum}{\ensuremath{\displaystyle\sum}}
\newcommand{\emp}{\ensuremath{{\varnothing}}}
\newcommand{\Id}{\ensuremath{\text{\rm Id}}\,}
\newcommand{\ID}{\ensuremath{\mathbf{Id}}\,}
\newcommand{\cart}{\ensuremath{\raisebox{-0.5mm}{\mbox{\LARGE{$\times$}}}}}
\newcommand{\RR}{\ensuremath{\mathbb{R}}}
\newcommand{\BL}{\ensuremath{\EuScript B}}
\newcommand{\RPP}{\ensuremath{\left]0,+\infty\right[}}
\newcommand{\RX}{\ensuremath{\left]-\infty,+\infty\right]}}
\newcommand{\RXX}{\ensuremath{\left[-\infty,+\infty\right]}}
\newcommand{\EE}{\ensuremath{\mathsf E}}
\newcommand{\PP}{\ensuremath{\mathsf P}}
\newcommand{\as}{\ensuremath{\text{\rm $\PP$-a.s.}}}
\newcommand{\NN}{\ensuremath{\mathbb N}}
\newcommand{\ran}{\ensuremath{\text{\rm ran}\,}}
\newcommand{\zer}{\ensuremath{\text{\rm zer}\,}}
\newcommand{\pinf}{\ensuremath{{+\infty}}}
\newcommand{\dom}{\ensuremath{\text{\rm dom}\,}}
\newcommand{\prox}{\ensuremath{\text{\rm prox}}}
\newcommand{\reli}{\ensuremath{\text{\rm ri}\,}}
\newcommand{\infconv}{\ensuremath{\mbox{\small$\,\square\,$}}}
\newcommand{\EEE}{\ensuremath{\boldsymbol{\EuScript{E}}}}
\newcommand{\argmind}[2]{\ensuremath{\underset{\substack{{#1}}}%
{\mathrm{argmin}}\;\;#2 }}
\newtheorem{theorem}{Theorem}[section]
\newtheorem{lemma}[theorem]{Lemma}
\newtheorem{proposition}[theorem]{Proposition}
\theoremstyle{plain}{\theorembodyfont{\rmfamily}%
}
\theoremstyle{plain}{\theorembodyfont{\rmfamily}%
\newtheorem{assumption}[theorem]{Assumption}}
\theoremstyle{plain}{\theorembodyfont{\rmfamily}%
}
\theoremstyle{plain}{\theorembodyfont{\rmfamily}%
\newtheorem{problem}[theorem]{Problem}}
\theoremstyle{plain}{\theorembodyfont{\rmfamily}%
}
\theoremstyle{plain}{\theorembodyfont{\rmfamily}%
\newtheorem{remark}[theorem]{Remark}}
\theoremstyle{plain}{\theorembodyfont{\rmfamily}%
}
\theoremstyle{plain}{\theorembodyfont{\rmfamily}%
}
\numberwithin{equation}{section}
\begin{document}

\title{\sffamily A Class of Randomized Primal-Dual Algorithms for Distributed Optimization}
\author{Jean-Christophe Pesquet and Audrey Repetti \footnote{This work was supported by the CNRS
MASTODONS project (grant 2013 MesureHD).}\\[5mm]
\small
\small Universit\'e Paris-Est\\
\small Laboratoire d'Informatique Gaspard Monge -- CNRS UMR 8049\\
\small 77454 Marne la Vall\'ee Cedex 2, France\\
\small \email{first.last@univ-paris-est.fr}
}
\date{}

\maketitle
\thispagestyle{empty}

\vskip 8mm

\begin{abstract}
Based on a preconditioned version of the randomized block-coordinate forward-backward algorithm recently proposed in \cite{Combettes_P_2014_stochastic_qfbc},
several variants of block-coordinate primal-dual algorithms are designed in order to solve a wide array of monotone inclusion problems.
These methods rely on a sweep of blocks of variables which are activated at each iteration according to a random rule, and they
allow stochastic errors in the evaluation of the involved operators. 
Then, this framework is employed to derive block-coordinate primal-dual proximal algorithms for solving composite convex variational problems. 
The resulting algorithm implementations may be useful for reducing computational complexity and memory requirements.
Furthermore, we show that the proposed approach can be used to develop novel asynchronous distributed primal-dual algorithms in a multi-agent context.  
\end{abstract}

{\bfseries Keywords.}
Block-coordinate algorithm,
convex optimization,
distributed algorithm,
monotone operator,
preconditioning,
primal-dual algorithm,
stochastic quasi-Fej\'er sequence.

{\bfseries MSC.}
47H05,
49M29,
49M27,
65K10,
90C25.

\newpage
\setcounter{page}{1}

\section{Introduction}
There has been recently a growing interest in primal-dual approaches for
finding a zero of a sum of monotone operators or minimizing a sum of proper lower-semicontinuous
convex functions (see \cite{Komodakis_N_2014_playing_dor} and the references therein). When various linear operators are involved in the formulation of the problem under investigation, 
solving jointly its primal and dual forms allows the design
of strategies where none of the linear operators needs to be inverted. Avoiding such inversions may offer a significant advantage in terms
of computational complexity when dealing with large-scale problems (see e.g. \cite{Bec_S_2014_j-nonlinear-conv-anal_alg_sps,Couprie_C_j-siam-is_dual_ctvb,Harizanov_S_2013_Epigraphical_psls,Jezierska_A_2012_p-icassp_primal_dps,Pustelnik_N_2014_j-sp_empirical_mdr,Repetti_A_2012_p-eusipco_penalized_wlsardcsdn,Teuber_T_2014_j-inv-prob_Minimization_pes}).

Various classes of fixed-point primal-dual algorithms have been developed, in particular those based on the forward-backward iteration \cite{Chambolle_A_2010_first_opdacpai,Chen_P_2013_j-inv-prob_prim_dfp,Combettes_P_2014_p-icip_forward_bvo,Combettes_P_2010_j-svva_dualization_srp,Combettes_P_2014_j-optim_Variable_mfb,Condat_L_2013_j-ota-primal-dsm,Esser_E_2010_j-siam-is_gen_fcf,Goldstein_T_2013_adaptive_pdh,He_B_2012_j-siam-is_conv_apd,Loris_I_2011_generalization_ist,Pock_T_2008_p-iccv_diagonal_pffo,Vu_B_2013_j-acm_spl_adm},
on the forward-backward-forward iteration \cite{Bec_S_2014_j-nonlinear-conv-anal_alg_sps,Bot_R_2014_jmiv_conv_primal_apd,Briceno_L_2011_j-siam-opt_mon_ssm,Combettes_P_2013_siam-opt_sys_smi,Combettes_P_2012_j-svva_pri_dsa}, on the Douglas-Rachford iteration \cite{Bot_R_2013_siam-opt_Dou_rtp,Combettes_P_2014_stochastic_qfbc}, or those derived from other principles \cite{Alghamdi_M_2014_optim-letters_primal_dmpi,Alotaibi_A_2014_Solving_ccm,Chen_G_1994_j-mp_pro_bdm,Nedic_A_2009_j-ota_subgradient_msp}.
This work is focused on the first class of primal-dual algorithms. When searching for a zero of a sum of monotone operators, the most recent versions
of these methods can exploit the properties of each operator either in an implicit manner, through the use of its resolvent, or
in a direct manner when the operator is cocoercive. When a sum of convex functions is minimized, this brings the ability either  to make use of the proximity 
operator of each function or to employ its gradient if the function is Lipschitz differentiable. As discussed in \cite{Bach_F_2012_j-ftml_optimization_sip,Combettes_P_2010_inbook_proximal_smsp,Parikh_N_2013_j-found-tml_prox_algo}, the proximity operator of a function is a versatile tool
in convex optimization for tackling possibly nonsmooth problems, but it may be sometimes preferable, in particular for complexity reasons,
to compute the gradient of the function when it enjoys some smoothness property.

Most of the aforementioned primal-dual methods make it possible to split the original problem in a sum of simpler terms whose associated operators
can be addressed individually, in a parallel manner, at each iteration of the algorithm. Our objective in this paper is to add more flexibility to the existing primal-dual
methods by allowing only a restricted number of these operators to be activated at each iteration. In the line of the work in \cite{Combettes_P_2014_stochastic_qfbc}, our approach will be grounded
on the use of random sweeping techniques which are applicable to algorithms generating (quasi-)Fej\'er monotone sequences.
One additional benefit of the proposed randomized approach is that it leads to algorithms which can be proved to be tolerant of stochastic errors satisfying some summability condition.

In the following, we will investigate two variants of forward-backward based primal-dual algorithms and we will design block-coordinate versions of both algorithms.
These block-coordinate methods may be interesting for their own sake in order to reduce memory and computational loads, but it turns out that they are also instrumental in 
developing distributed strategies. More precisely, we will be interested in multi-agent problems where the performed updates can be limited to a neighborhood of a small
number of agents 
in an asynchronous way. We will show that the proposed random distributed schemes apply not only to convex optimization problems, but
also to general monotone inclusion ones. It is worth noting that, in the variational case, some distributed primal-dual algorithms have already been proposed 
implementing subgradient steps \cite{Chang_T_2014_j-ieee-tac_distributed_coc,Yuan_D_2011_j-ieee-smc_distributed_pdsm}
(see also \cite{Towfic_Z_2014_Stability_plapdn} for applications to data networks). As a general feature of (unaveraged) subgradient methods, 
their convergence requires the use of step-sizes converging to zero. Making use of proximity operators, which can be viewed as implicit subgradient descent steps, 
allows less restrictive step-size choices to be made. For example, convergence of the iterates can be established for constant step-size values.

The remainder of the paper is organized as follows. In Section \ref{se:nota} we provide some relevant background on monotone operator theory and convex analysis, and we introduce our notation. In Section \ref{se:fback}, a preconditioned random block-coordinate version of the forward-backward iteration is presented. Based on this algorithm, in Section \ref{se:monocoord}, we propose novel block-coordinate primal-dual methods for constructing iteratively a zero of a sum of monotone operators, and we study their convergence. In Section \ref{se:coordopt}, similar block-coordinate primal-dual algorithms are developed for solving composite convex optimization problems. Finally, in Section \ref{se:dist}, we show how the proposed random block-coordinate approaches are able to provide distributed iterative solutions to monotone inclusion and convex variational problems.

\section{Notation}\label{se:nota}
The reader is referred to \cite{Bauschke_H_2011_book_con_amo} 
for background on monotone operator theory and convex analysis, and to
\cite{Fortet_R_1995_book_fonctions_daeh} for background on probability in Hilbert spaces.
Throughout this work, $(\Omega,\FF,\PP)$ is the underlying probability space.
For simplicity, the same notation $\scal{\cdot}{\cdot}$ (resp. $\|\cdot\|$) is used for the inner products
(resp. norms) which equip all the Hilbert spaces considered in this paper.
Let $\HH$ be a separable real Hilbert space with Borel $\sigma$-algebra $\mathcal{B}$. 
A $\HH$-valued random variable is a measurable 
map $x\colon(\Omega,\FF)\to(\HH,\mathcal{B})$. The smallest
$\sigma$-algebra generated by a family $\Phi$ of 
random variables is denoted by $\sigma(\Phi)$. The expectation is denoted by $\EE(\cdot)$.

Let $\GG$ be a real Hilbert space.
We denote by $\BL(\HH,\GG)$ the space of bounded linear operators 
from $\HH$ to $\GG$, and we set $\BL(\HH)=\BL(\HH,\HH)$.
Let $\mathsf{L}\in \BL(\HH,\GG)$, its adjoint is denoted by $\mathsf{L}^*$.
$\mathsf{L}\in \BL(\HH)$ is a strongly positive self-adjoint operator if $\mathsf{L}^*=\mathsf{L}$
and there exists $\alpha \in \RPP$ such that $(\forall \mathsf{x}\in \HH)$
$\scal{\mathsf{x}}{\mathsf{L}\mathsf{x}} \ge \alpha \|\mathsf{x}\|^2$.
Then, $\mathsf{L}$ is an isomorphism and its inverse is a strongly positive self-adjoint operator
in $\BL(\HH)$. The square root of a strongly positive operator $\mathsf{L}$ is denoted by $\mathsf{L}^{1/2}$
and its inverse by $\mathsf{L}^{-1/2}$.
$\Id$ denotes the identity operator on $\HH$. 

The power set of $\HH$ is denoted by $2^\HH$. Let $\mathsf{A}\colon\HH\to 2^{\HH}$ be a set-valued operator.
If, for every $\mathsf{x}\in \HH$, $\mathsf{A}\mathsf{x}$ is a singleton, then $\mathsf{A}$ will be identified
with a mapping from $\HH$ to $\HH$.
We denote 
by $\zer \mathsf{A}=\menge{\mathsf{x}\in\HH}{\mathsf{0}\in \mathsf{A}\mathsf{x}}$ the set of zeros 
of $\mathsf{A}$ and by $\mathsf{A}^{-1}\colon\HH\mapsto 2^{\HH}\colon \mathsf{u}\mapsto 
\menge{\mathsf{x}\in\HH}{\mathsf{u}\in \mathsf{A}\mathsf{x}}$ 
the inverse of $\mathsf{A}$.
Operator $\mathsf{A}$ is monotone if 
$(\forall(\mathsf{x},\mathsf{y})\in\HH^2)$
$(\forall \mathsf{u}\in \mathsf{A}\mathsf{x})$ $(\forall \mathsf{v}\in \mathsf{A}\mathsf{y})$
$\scal{\mathsf{x}-\mathsf{y}}{\mathsf{u}-\mathsf{v}}\geq~0$.
Such an operator is maximally monotone if there exists no other 
monotone operator whose graph includes the graph of $\mathsf{A}$.
$\mathsf{A}$ is $\beta$-strongly monotone for some 
$\beta\in\RPP$ if 
$(\forall(\mathsf{x},\mathsf{y})\in\HH^2)$
$(\forall \mathsf{u}\in \mathsf{A}\mathsf{x})$ $(\forall \mathsf{v}\in \mathsf{A}\mathsf{y})$
$\scal{\mathsf{x}-\mathsf{y}}{\mathsf{u}-\mathsf{v}}\geq \beta \|\mathsf{x}-\mathsf{y}\|^2$.
Let $\mathsf{B}$ be a single-valued operator from $\HH$ to $\HH$.
$\mathsf{B}$ is $\beta$-cocoercive for some 
$\beta\in\RPP$ if
$(\forall(\mathsf{x},\mathsf{y})\in\HH^2)$
$\scal{\mathsf{x}-\mathsf{y}}{\mathsf{B}\mathsf{x}-\mathsf{B}\mathsf{y}}\geq\beta\|\mathsf{B}\mathsf{x}-\mathsf{B}\mathsf{y}\|^2$.
Therefore, $\mathsf{B}$ is $\beta$-cocoercive if and only if $\mathsf{B}^{-1}\colon \HH \to 2^\HH$ is $\beta$-strongly monotone.
$\mathsf{B}$ is $\alpha$-averaged with $\alpha\in ]0,1[$ if
$(\forall(\mathsf{x},\mathsf{y})\in\HH^2)$ 
$\|\mathsf{B}\mathsf{x}-\mathsf{B}\mathsf{y}\|^2\leq
\|\mathsf{x}-\mathsf{y}\|^2-\displaystyle{\frac{1-\alpha}{\alpha}}
\|(\Id-\mathsf{B})\mathsf{x}-(\Id-\mathsf{B})\mathsf{y}\|^2$.
As a consequence of Minty's theorem, an operator $\mathsf{A}\colon\HH\to 2^\HH$ is maximally monotone if and only if its resolvent 
$\mathsf{J}_\mathsf{A}=(\Id+\mathsf{A})^{-1}$ is a firmly nonexpansive 
(i.e. 1-cocoercive) operator from $\HH$ to $\HH$.  
As a generalization of Moreau's decomposition formula, if $\mathsf{A}\colon \HH\to 2^\HH$ is maximally monotone, $\mathsf{U}$ is a strongly positive self-adjoint
operator in $\BL(\HH)$, and $\gamma\in \RPP$, then $\mathsf{J}_{\gamma \mathsf{U}\mathsf{A}}\colon\HH\to \HH$ is such that
\begin{equation}\label{e:moreaugen}
\mathsf{J}_{\gamma \mathsf{U}\mathsf{A}} = \mathsf{U}^{1/2} \mathsf{J}_{\gamma \mathsf{U}^{1/2}\mathsf{A}\mathsf{U}^{1/2}} \mathsf{U}^{-1/2}
= \Id-\gamma \mathsf{U} \mathsf{J}_{\gamma^{-1} \mathsf{U}^{-1}\mathsf{A}^{-1}}(\gamma^{-1} \mathsf{U}^{-1}\cdot)
\end{equation}
(see \cite[Example 3.9]{Combettes_P_2014_j-optim_Variable_mfb}).
The parallel sum of $\mathsf{A}\colon\HH\to 2^{\HH}$ and $\mathsf{C}\colon\HH\to 2^{\HH}$ is 
$\mathsf{A}\infconv \mathsf{C}=(\mathsf{A}^{-1}+ \mathsf{C}^{-1})^{-1}$.

The domain of a function $\mathsf{f}\colon \HH \to \RX$ is 
$\dom \mathsf{f} = \menge{\mathsf{x}\in \HH}{\mathsf{f}(\mathsf{x}) < \pinf}$. 
A function with a nonempty domain is said to be proper.
The class of proper, convex, lower-semicontinuous functions from
$\HH$ to $\RX$ is denoted by $\Gamma_0(\HH)$. If $\mathsf{f}\in\Gamma_0(\HH)$,
then the Moreau subdifferential of $\mathsf{f}$ is the maximally monotone operator
\begin{equation}
\partial \mathsf{f}\colon\HH\to 2^{\HH}\colon \mathsf{x} \mapsto\menge{\mathsf{u}\in\HH}{(\forall \mathsf{y}\in\HH)\;
\scal{\mathsf{y}-\mathsf{x}}{\mathsf{u}}+\mathsf{f}(\mathsf{x})\leq \mathsf{f}(\mathsf{y})}.
\end{equation} 
If $\mathsf{f}$ is proper and $\beta$-strongly convex for some $\beta\in \RPP$, then $\partial \mathsf{f}$ is $\beta$-strongly monotone.
If $\mathsf{f}\in \Gamma_0(\HH)$ is G\^ateaux-differentiable at $\mathsf{x}\in \HH$, then $\partial \mathsf{f}(\mathsf{x}) = \{\nabla \mathsf{f}(\mathsf{x})\}$
where $\nabla \mathsf{f}(\mathsf{x})$ is the gradient of $\mathsf{f}$ at $\mathsf{x}$.
$\mathsf{f}\colon \HH\to \RR$ is $\beta^{-1}$-Lipschitz differentiable for some $\beta \in \RPP$ if it is G\^ateaux-differentiable on $\HH$ and
$(\forall (\mathsf{x},\mathsf{y})\in \HH^2)$ $\beta \|\nabla \mathsf{f}(\mathsf{x})-\nabla \mathsf{f}(\mathsf{y})\|\le \|\mathsf{x}-\mathsf{y}\|$. 
The Baillon-Haddad theorem asserts that a differentiable convex function $\mathsf{f}$
defined on $\HH$ is $\beta^{-1}$-Lipschitz differentiable if and only if
its gradient $\nabla \mathsf{f}$ is $\beta$-cocoercive.
If $\Lambda$ is a nonempty subset of $\HH$, 
the indicator function of $\Lambda$ is
$(\forall \mathsf{x} \in \HH)$ $\iota_\Lambda(\mathsf{x}) = 0$ if $\mathsf{x}\in \Lambda$, and $\pinf$ otherwise.
This function
belongs to $\Gamma_0(\HH)$ if and only if $\Lambda$ is a nonempty closed convex set.
Its subdifferential $\partial \iota_\Lambda$ is the normal cone to $\Lambda$, denoted by $\mathsf{N}_\Lambda$.
The identity element of the parallel sum is $\mathsf{N}_{\{0\}}$.
 The inf-convolution of two functions $\mathsf{f}\colon \HH \to \RX$
and $\mathsf{h}\colon \HH \to \RX$ is defined as 
$\mathsf{f}\infconv \mathsf{h}\colon\HH\to\RXX\colon \mathsf{x}\mapsto
\inf_{\mathsf{y}\in\HH}\big(\mathsf{f}(\mathsf{y})+\mathsf{h}(\mathsf{x}-\mathsf{y})\big)$.
The identity element of the inf-convolution is $\iota_{\{0\}}$.
The conjugate of a function $\mathsf{f}\in \Gamma_0(\HH)$ is $\mathsf{f}^*\in \Gamma_0(\HH)$ such that
$(\forall  \mathsf{v}\in \HH)$ $\mathsf{f}^*(\mathsf{v}) = \sup_{\mathsf{x}\in\HH}\big(\scal{\mathsf{x}}{\mathsf{v}}-\mathsf{f}(\mathsf{x})\big)$.
We have then $\partial \mathsf{f}^* = (\partial \mathsf{f})^{-1}$.
Let $\mathsf{U}$ be a strongly positive self-adjoint operator in $\BL(\HH)$. 
The proximity operator of $\mathsf{f}\in\Gamma_0(\HH)$ relative to 
the metric induced by $\mathsf{U}$ is \cite[Section~XV.4]{Hiriart_Urruty_1996_book_2_convex_amaIf} 
\begin{equation}
\prox^{\mathsf{U}}_ {\mathsf{f}}\colon\HH\to\HH\colon \mathsf{x}\to
\argmind{\mathsf{y}\in\HH}{\mathsf{f}(\mathsf{y})+\frac12 \scal{\mathsf{x}-\mathsf{y}}{\mathsf{U}(\mathsf{x}-\mathsf{y}})}.
\end{equation}
We have thus $\prox^{\mathsf{U}}_ {\mathsf{f}} = \mathsf{J}_{\mathsf{U}^{-1}\partial \mathsf{f}}$.
When $\mathsf{U}=\Id$, we retrieve the standard definition of 
the proximity operator originally introduced in \cite{Moreau_J_1965_bsmf_Proximite_eddueh}.
If $\Lambda$ is a nonempty closed convex subset of $\HH$, $\Pi_\Lambda = \prox^{\Id}_{\iota_{\Lambda}}$
is the projector onto $\Lambda$. In the following, 
the relative interior of a subset $\Lambda$ of $\HH$
is denoted by $\reli \Lambda$.

Let $(\GG_i)_{1\leq i\leq m}$ be real Hilbert spaces.
$\GGG=\GG_1\oplus\cdots\oplus\GG_m$ is their Hilbert direct sum, i.e., their 
product space endowed with the scalar product 
$(\boldsymbol{\mathsf{x}},\boldsymbol{\mathsf{y}})\mapsto
\sum_{i=1}^m\scal{\mathsf{x}_i}{\mathsf{y}_i}$, where a generic element in
$\GGG$ is denoted by
$\boldsymbol{\mathsf{x}}=(\mathsf{x}_i)_{1\leq i\leq m}$ with $\mathsf{x}_i\in \GG_i$, for every
$i\in \{1,\ldots,m\}$. In addition, $\mathbb{D}_m=\{0,1\}^m\smallsetminus\{\boldsymbol{0}\}$
denotes the set of nonzero binary strings of length $m$.
We will keep on using this notation throughout the paper.

\section{A preconditioned random block-coordinate forward-backward algorithm}
\label{se:fback}
In this section, $m$ is a positive integer, $\KK_1,\ldots,\KK_m$ are separable
real Hilbert spaces, and $\KKK=\KK_1\oplus\cdots\oplus\KK_m$ is 
their Hilbert direct sum. 

The algorithms in this paper are rooted in the forward-backward iteration \cite{Combettes_P_2005_j-siam-mms_signal_rpfb}
(see \cite{Attouch_H_2010_j-siam-control-optim_Parallel_spmcmi} for examples of problems which can be solved by this method).
A block-coordinate version
of the forward-backward method was recently proposed in \cite[Section 5.2]{Combettes_P_2014_stochastic_qfbc}. Stochastic versions of this algorithm
were also presented in \cite{Necoara_I_2014_j-comp-opt-appl_random_cda,Richtarik_P_2014_j-math-programm_iteration_crbc} in a variational framework.
Now, we show how a preconditioning operator can be included in the 
block-coordinate forward-backward algorithm through a metric change.
\begin{proposition}
\label{p:FBPreconf}
Let $\boldsymbol{\mathsf{Q}}\colon\KKK\to2^\KKK$ be 
a maximally monotone operator and let $\boldsymbol{\mathsf R}\colon\KKK\to\KKK$ 
be a cocoercive operator. Assume that $\boldsymbol{\mathsf{Z}} = \zer(\boldsymbol{\mathsf Q}+\boldsymbol{\mathsf R})$ 
is nonempty. Let $\boldsymbol{\mathsf{V}}$
be a strongly positive self-adjoint operator in $\BL(\KKK)$
such that $\boldsymbol{\mathsf V}^{1/2} \boldsymbol{\mathsf R} \boldsymbol{\mathsf V}^{1/2}$
is $\vartheta$-cocoercive with $\vartheta \in \RPP$.
Let $(\gamma_n)_{n\in\NN}$ be a sequence in 
$\RR$ such that
$\inf_{n\in\NN}\gamma_n>0$ and $\sup_{n\in\NN}\gamma_n<2\vartheta$,
and let $(\lambda_n)_{n\in\NN}$ be a sequence in $\left]0,1\right]$
such that $\inf_{n\in\NN}\lambda_n>0$. 
Let $\boldsymbol{z}_0$, $(\boldsymbol{s}_n)_{n\in\NN}$, 
and $(\boldsymbol{t}_n)_{n\in\NN}$ be $\KKK$-valued random 
variables, and let $(\boldsymbol{\varepsilon}_n)_{n\in\NN}$ be identically distributed
$\mathbb{D}_m$-valued random variables. For every $n\in \NN$, set
$\boldsymbol{\mathsf{J}}_{\gamma_n\boldsymbol{\mathsf{V}}\boldsymbol{\mathsf{Q}}}
\colon\boldsymbol{\mathsf{z}}\mapsto
(\mathsf{T}_{i,n}\boldsymbol{\mathsf{z}})_{1\leq i\leq m}$
where $(\forall i \in \{1,\ldots,m\})$ $\mathsf{T}_{i,n}\colon\KKK\to\KK_i$,
iterate
\begin{equation}
\label{e:FBPrecond}
\begin{array}{l}
\text{for}\;n=0,1,\ldots\\
\left\lfloor
\begin{array}{l}
\boldsymbol{r}_n = \boldsymbol{\mathsf{V}}\boldsymbol{\mathsf{R}}\boldsymbol{z}_n\\ 
\text{for}\;i=1,\ldots,m\\
\left\lfloor
\begin{array}{l}
z_{i,n+1}=z_{i,n}+\lambda_n\varepsilon_{i,n}
\big(\mathsf{T}_{i,n}(\boldsymbol{z}_n-\gamma_n \boldsymbol{r}_n+\boldsymbol{s}_n)+
t_{i,n}-z_{i,n}\big),
\end{array}
\right.
\end{array}
\right.\\
\end{array}
\end{equation}
and set $(\forall n\in\NN)$ $\EEE_n=\sigma(\boldsymbol{\varepsilon}_n)$
and $\boldsymbol{\ZZZ}_n=\sigma(\boldsymbol{z}_0,\ldots,\boldsymbol{z}_n)$.
In addition, assume that the following hold:
\begin{enumerate}
\item
\label{p:nyc2014-04-03ii}
$\sum_{n\in\NN}\sqrt{\EC{\|\boldsymbol{s}_n\|^2}{
\boldsymbol{\ZZZ}_n}}<\pinf$ and
$\sum_{n\in\NN}\sqrt{\EC{\|\boldsymbol{t}_n\|^2}{
\boldsymbol{\ZZZ}_n}}<\pinf$ $\as$
\item
\label{p:nyc2014-04-03iv}
For every $n\in\NN$, $\EEE_n$ and $\boldsymbol{\ZZZ}_n$ 
are independent and $(\forall i\in\{1,\ldots,m\})$ 
$\PP[\varepsilon_{i,0} = 1] > 0$.
\end{enumerate}
Then $(\boldsymbol{z}_n)_{n\in\NN}$ converges weakly $\as$ to a 
$\boldsymbol{\mathsf Z}$-valued random variable.
\end{proposition}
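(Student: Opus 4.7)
My proof plan is to view the algorithm as a block-coordinate forward-backward iteration in a renormed version of $\KKK$ and then to invoke the (non-preconditioned) block-coordinate forward-backward result of \cite[Section~5.2]{Combettes_P_2014_stochastic_qfbc}. I would endow $\KKK$ with the equivalent inner product $\langle \cdot\,|\,\cdot\rangle_{\boldsymbol{\mathsf V}^{-1}}:=\langle \boldsymbol{\mathsf V}^{-1}\cdot\,|\,\cdot\rangle$. Since $\boldsymbol{\mathsf V}$ is a strongly positive self-adjoint isomorphism, the induced norm is equivalent to $\|\cdot\|$, so the weak topology, Borel measurability, the summability conditions in (i), and the sampling hypotheses in (ii) are all unaffected; in particular, a.s. weak convergence in the renormed space coincides with a.s. weak convergence in $\KKK$.

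I would then check that the preconditioned operators fit into the standard forward-backward framework in the new metric. The operator $\boldsymbol{\mathsf V}\boldsymbol{\mathsf Q}$ is maximally monotone in $(\KKK,\langle\cdot\,|\,\cdot\rangle_{\boldsymbol{\mathsf V}^{-1}})$ because $\langle (\boldsymbol{\mathsf V}\boldsymbol{\mathsf Q})\boldsymbol{x}-(\boldsymbol{\mathsf V}\boldsymbol{\mathsf Q})\boldsymbol{y}\,|\,\boldsymbol{x}-\boldsymbol{y}\rangle_{\boldsymbol{\mathsf V}^{-1}}=\langle \boldsymbol{\mathsf Q}\boldsymbol{x}-\boldsymbol{\mathsf Q}\boldsymbol{y}\,|\,\boldsymbol{x}-\boldsymbol{y}\rangle\geq 0$ and because $\ran(\Id+\boldsymbol{\mathsf V}\boldsymbol{\mathsf Q})=\KKK$; the operator $\boldsymbol{\mathsf V}\boldsymbol{\mathsf R}$ is $\vartheta$-cocoercive in the new metric, which is exactly the hypothesis on $\boldsymbol{\mathsf V}^{1/2}\boldsymbol{\mathsf R}\boldsymbol{\mathsf V}^{1/2}$ translated through the substitution $\boldsymbol{x}=\boldsymbol{\mathsf V}^{1/2}\boldsymbol{u}$; the resolvent $\boldsymbol{\mathsf{J}}_{\gamma\boldsymbol{\mathsf V}\boldsymbol{\mathsf Q}}=(\Id+\gamma\boldsymbol{\mathsf V}\boldsymbol{\mathsf Q})^{-1}$ is a purely algebraic object, hence coincides with the resolvent of $\gamma\boldsymbol{\mathsf V}\boldsymbol{\mathsf Q}$ in the renormed space; and $\zer(\boldsymbol{\mathsf V}\boldsymbol{\mathsf Q}+\boldsymbol{\mathsf V}\boldsymbol{\mathsf R})=\boldsymbol{\mathsf Z}$ by bijectivity of $\boldsymbol{\mathsf V}$. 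The step-size constraint $\sup_{n}\gamma_n<2\vartheta$ then makes $\boldsymbol{\mathsf{J}}_{\gamma_n\boldsymbol{\mathsf V}\boldsymbol{\mathsf Q}}(\Id-\gamma_n\boldsymbol{\mathsf V}\boldsymbol{\mathsf R})$ averaged in the new metric with fixed-point set $\boldsymbol{\mathsf Z}$.

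With these preparations, \eqref{e:FBPrecond} reads as a relaxed block-coordinate forward-backward iteration—with stochastic errors $\boldsymbol{s}_n$ inside the forward step, stochastic errors $\boldsymbol{t}_n$ added to the backward step, and the random block selector $\boldsymbol{\varepsilon}_n$—for the inclusion $\mathsf{0}\in\boldsymbol{\mathsf V}\boldsymbol{\mathsf Q}\boldsymbol{z}+\boldsymbol{\mathsf V}\boldsymbol{\mathsf R}\boldsymbol{z}$ in the renormed Hilbert space. Invoking the cited block-coordinate forward-backward theorem there, and transporting the conclusion back to $\KKK$ by norm equivalence, yields the desired a.s. weak convergence of $(\boldsymbol{z}_n)_{n\in\NN}$ to a $\boldsymbol{\mathsf Z}$-valued random variable.

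The principal obstacle is that the block decomposition $\KKK=\KK_1\oplus\cdots\oplus\KK_m$ is not orthogonal for $\langle\cdot\,|\,\cdot\rangle_{\boldsymbol{\mathsf V}^{-1}}$ unless $\boldsymbol{\mathsf V}$ is itself block-diagonal, whereas the cited theorem is stated for a standard orthogonal direct sum. I would address this by re-examining its proof and checking that only three ingredients are really needed: averagedness of the forward-backward operator (which holds here in the new metric), independence of $\boldsymbol{\varepsilon}_n$ from $\boldsymbol{\ZZZ}_n$ with strictly positive marginal activation probabilities (assumption (ii)), and summability of the conditional error terms (assumption (i), unchanged up to norm equivalence). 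The stochastic quasi-Fej\'er calculation is then carried out with the Lyapunov function $\boldsymbol{z}\mapsto\|\boldsymbol{z}-\boldsymbol{z}^*\|_{\boldsymbol{\mathsf V}^{-1}}^2$, while the block-coordinate update rule remains the intrinsic coordinate decomposition of $\KKK$.
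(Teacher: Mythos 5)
Your proof follows essentially the same route as the paper's: renorm $\KKK$ with $\scal{\cdot}{\boldsymbol{\mathsf V}^{-1}\cdot}$, observe that $\boldsymbol{\mathsf V}\boldsymbol{\mathsf Q}$ is maximally monotone and $\boldsymbol{\mathsf V}\boldsymbol{\mathsf R}$ is $\vartheta$-cocoercive in the new metric (so that the resolvent is firmly nonexpansive and the forward step is $\gamma_n/(2\vartheta)$-averaged there), and invoke the randomized block-coordinate forward-backward result of \cite{Combettes_P_2014_stochastic_qfbc}, transporting the error summability and the weak convergence back through norm equivalence. The only place you go beyond the paper is in flagging that the decomposition $\KKK=\KK_1\oplus\cdots\oplus\KK_m$ is not orthogonal for $\scal{\cdot}{\cdot}_{\boldsymbol{\mathsf V}^{-1}}$ when $\boldsymbol{\mathsf V}$ is not block-diagonal; the paper applies the cited theorem without comment on this, so your plan to re-run the stochastic quasi-Fej\'er computation with the Lyapunov function $\|\cdot-\boldsymbol{\mathsf z}^*\|_{\boldsymbol{\mathsf V}^{-1}}^2$ while keeping the intrinsic coordinate updates is a welcome extra precaution rather than a divergence.
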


\begin{proof}
We have $\boldsymbol{\mathsf{Z}}=\zer(\boldsymbol{\mathsf V}\boldsymbol{\mathsf{Q}}+\boldsymbol{\mathsf{V}}\boldsymbol{\mathsf R})\neq \emp$.
Since $\boldsymbol{\mathsf{V}}$ is a strongly positive self-adjoint operator, 
we can renorm the space $\KKK$ with the norm:
\begin{equation}
(\forall \boldsymbol{\mathsf z}\in \KKK)\qquad 
\|\boldsymbol{\mathsf z}\|_{\boldsymbol{\mathsf{V}}^{-1}} = \sqrt{\scal{\boldsymbol{\mathsf{z}}}{\boldsymbol{\mathsf{V}}^{-1} \boldsymbol{\mathsf{z}}}}.
\end{equation}
Let $\scal{\cdot}{\cdot}_{\boldsymbol{\mathsf{V}}^{-1}}$ denote the associated inner product.
In this renormed space, $\boldsymbol{\mathsf{V}}\boldsymbol{\mathsf{Q}}$ is maximally monotone. In addition,  
\begin{align}
\;\big(\forall (\boldsymbol{\mathsf{z}},\boldsymbol{\mathsf{z}}')\in \KKK^2\big)\quad
\|\boldsymbol{\mathsf{V}}\boldsymbol{\mathsf R}\boldsymbol{\mathsf{z}}-\boldsymbol{\mathsf{V}}\boldsymbol{\mathsf R}\boldsymbol{\mathsf{z}}'\|_{\boldsymbol{\mathsf{V}}^{-1}}^2
&= \|\boldsymbol{\mathsf{V}}^{1/2}\boldsymbol{\mathsf R}\boldsymbol{\mathsf{z}}-\boldsymbol{\mathsf{V}}^{1/2}\boldsymbol{\mathsf R}\boldsymbol{\mathsf{z}}'\|^2\nonumber\\
&\le \vartheta^{-1} \scal{\boldsymbol{\mathsf{V}}^{-1/2}\boldsymbol{\mathsf{z}}-\boldsymbol{\mathsf{V}}^{-1/2}\boldsymbol{\mathsf{z}}'}
{\boldsymbol{\mathsf{V}}^{1/2}\boldsymbol{\mathsf R}\boldsymbol{\mathsf{z}}-\boldsymbol{\mathsf{V}}^{1/2}\boldsymbol{\mathsf R}\boldsymbol{\mathsf{z}}'}\nonumber\\
&= \vartheta^{-1} \scal{\boldsymbol{\mathsf{z}}-\boldsymbol{\mathsf{z}}'}
{\boldsymbol{\mathsf R}\boldsymbol{\mathsf{z}}-\boldsymbol{\mathsf R}\boldsymbol{\mathsf{z}}'}\nonumber\\
&= \vartheta^{-1} \scal{\boldsymbol{\mathsf{z}}-\boldsymbol{\mathsf{z}}'}
{\boldsymbol{\mathsf{V}}\boldsymbol{\mathsf R}\boldsymbol{\mathsf{z}}-\boldsymbol{\mathsf{V}}\boldsymbol{\mathsf R}\boldsymbol{\mathsf{z}}'}_{\boldsymbol{\mathsf{V}}^{-1}},
\end{align}
which shows that $\boldsymbol{\mathsf{V}}\boldsymbol{\mathsf R}$ is $\vartheta$-cocoercive in $(\KKK,\|\cdot\|_{\boldsymbol{\mathsf{V}}^{-1}})$.
A forward-backward iteration can thus be employed to find an element of $\boldsymbol{\mathsf{Z}}$ by composing
operators $\boldsymbol{\mathsf{J}}_{\gamma_n\boldsymbol{\mathsf{V}}\boldsymbol{\mathsf{Q}}}$ and 
$\ID-\gamma_n \boldsymbol{\mathsf{V}}\boldsymbol{\mathsf{R}}$. In $(\KKK,\|\cdot\|_{\boldsymbol{\mathsf{V}}^{-1}})$, the first operator is firmly nonexpansive 
(hence, $1/2$-averaged) and the second one is $\gamma_n/(2\vartheta)$-averaged \cite[Proposition 4.33]{Bauschke_H_2011_book_con_amo}.
The relaxed randomized algorithm given in \cite[Section 4]{Combettes_P_2014_stochastic_qfbc} then takes the form
\eqref{e:FBPrecond}. The convergence result follows from \cite[Theorem~4.1]{Combettes_P_2014_stochastic_qfbc} by noticing that Assumption \ref{p:nyc2014-04-03ii}
leads to
\begin{align}
&\sum_{n\in\NN}\sqrt{\EC{\|\boldsymbol{s}_n\|_{\boldsymbol{\mathsf{V}}^{-1}}^2}{
\boldsymbol{\ZZZ}_n}}\le \sqrt{\|\boldsymbol{\mathsf{V}}^{-1}\|} \sum_{n\in\NN}\sqrt{\EC{\|\boldsymbol{s}_n\|^2}{
\boldsymbol{\ZZZ}_n}} <\pinf\\
&\sum_{n\in\NN}\sqrt{\EC{\|\boldsymbol{t}_n\|_{\boldsymbol{\mathsf{V}}^{-1}}^2}{
\boldsymbol{\ZZZ}_n}} \le \sqrt{\|\boldsymbol{\mathsf{V}}^{-1}\|} \sum_{n\in\NN}\sqrt{\EC{\|\boldsymbol{t}_n\|^2}{
\boldsymbol{\ZZZ}_n}} <\pinf
\end{align}
and that  weak convergences in the sense of $\scal{\cdot}{\cdot}$ and $\scal{\cdot}{\cdot}_{\boldsymbol{\mathsf{V}}^{-1}}$ are equivalent.
\end{proof}
\begin{remark}\ 
\begin{enumerate}
\item If $\boldsymbol{\mathsf R} = \boldsymbol{\mathsf L}^* \widetilde{\boldsymbol{\mathsf R}} \boldsymbol{\mathsf L}$
where $\boldsymbol{\mathsf L}  \in \BL(\KKK,\widetilde{\KKK})$, $\widetilde{\boldsymbol{\mathsf R}}\colon \widetilde{\KKK}
\to \widetilde{\KKK}$ is $\widetilde{\vartheta}$-cocoercive with $\widetilde{\vartheta}\in \RPP$, and $\widetilde{\KKK}$ is a separable real  Hilbert space, then $\boldsymbol{\mathsf V}^{1/2} \boldsymbol{\mathsf R} \boldsymbol{\mathsf V}^{1/2}$
is $\vartheta$-cocoercive for every strongly positive self-adjoint operator $\boldsymbol{\mathsf{V}}\in \BL(\KKK)$
such that $\vartheta \|\boldsymbol{\mathsf L} \boldsymbol{\mathsf V} \boldsymbol{\mathsf L}^*\| = \widetilde{\vartheta}$.
\item At iteration $n\in \NN$, $\boldsymbol{s}_{n}$ and $\boldsymbol{t}_{n}$ can be viewed as error terms
when applying $\boldsymbol{\mathsf{R}}$ and $\boldsymbol{\mathsf{J}}_{\gamma_n\boldsymbol{\mathsf{V}}\boldsymbol{\mathsf{Q}}}$, respectively.
The ability to consider summable stochastic errors offers more freedom than the assumption of summable deterministic errors which is often adopted in the literature. Note however that   
relative error models are considered in \cite{Monteiro_Svaiter_2010_Optim_Online,Solodov_Svaiter_2001_Num_Funct_Anal_Optim,Svaiter_2014_jota}.
\item Let $n\in\NN^*$. In view of \eqref{e:FBPrecond}, $\EEE_n$ and $\boldsymbol{\ZZZ}_n$ are independent if
$\boldsymbol{\varepsilon}_n$ is independent of 
$\big(\boldsymbol{z}_0,(\boldsymbol{\varepsilon}_{n'},\boldsymbol{s}_{n'},\boldsymbol{t}_{n'})_{0\le n' < n}\big)$.
\end{enumerate}
\end{remark}

\section{Block-coordinate primal-dual algorithms for composite monotone inclusion problems}
\label{se:monocoord}
In the rest of this section, $p$ and $q$ are positive integers,
$(\HH_j)_{1\leq j\leq p}$ and $(\GG_k)_{1\leq k\leq q}$ are separable real Hilbert spaces.
In addition, $\HHH=\HH_1\oplus\cdots\oplus\HH_p$ and $\GGG=\GG_1\oplus\cdots\oplus\GG_q$
denote the Hilbert direct sums of $(\HH_j)_{1\leq j\leq p}$ and
 $(\GG_k)_{1\leq k\leq q}$, respectively. We will also consider the product space $\KKK=\HHH\oplus\GGG$. 

\subsection{Problem}
The following problem involving monotone operators
which has drawn much attention in the last years (see e.g. \cite{Bot_R_2013_siam-opt_Dou_rtp,Briceno_L_2012_nonlinear-anal_Douglas_rsms,Combettes_P_2013_siam-opt_sys_smi,Combettes_P_2012_j-svva_pri_dsa,Pesquet_J_2012_j-pjpjoo_par_ipo,Raguet_H_2013_j-siam-is_generalized_fbs}) will play a prominent role throughout this work.
\begin{problem}
\label{prob:main}
For every 
$j\in\{1,\ldots,p\}$, let 
$\mathsf{A}_j\colon\HH_j\to 2^{\HH_j}$ be maximally monotone,
let $\mathsf{C}_j\colon\HH_j\to \HH_j$ be cocoercive
and, for every $k\in\{1,\ldots,q\}$, let 
$\mathsf{B}_k\colon\GG_k\to 2^{\GG_k}$ be maximally monotone, 
let $\mathsf{D}_k\colon\GG_k\to 2^{\GG_k}$ be maximally and strongly monotone,
and let $\mathsf{L}_{k,j}\in \BL(\HH_j,\GG_k)$. 
It is assumed that
\begin{align}
&(\forall k\in \{1,\ldots,q\})\qquad
\mathbb{L}_k = \menge{j\in \{1,\ldots,p\}}{\mathsf{L}_{k,j}\neq 0}\neq \emp,\label{e:defLk}\\
&(\forall j\in \{1,\ldots,p\})\qquad \mathbb{L}_j^* = \menge{k\in \{1,\ldots,q\}}{\mathsf{L}_{k,j}\neq 0}\neq \emp,\label{e:defLjs}
\end{align}
and that the set $\boldsymbol{\mathsf{F}}$ of solutions to the problem:
\begin{multline}
\label{e:primmon}
\text{find}\;\;{\mathsf{x}_1\in\HH_1,\ldots,\mathsf{x}_p\in\HH_p}
\;\;\text{such that}\;\;\\(\forall j\in\{1,\ldots,p\})\quad 0\in
\mathsf{A}_j\mathsf{x}_j+\mathsf{C}_j\mathsf{x}_j+\sum_{k=1}^q\mathsf{L}_{k,j}^*(\mathsf{B}_k\infconv\mathsf{D}_k)
\bigg(\sum_{j'=1}^p\mathsf{L}_{k,j'}\mathsf{x}_{j'}\bigg)
\end{multline}
is nonempty. We also consider the set $\boldsymbol{\mathsf{F}}^*$ of 
solutions to the dual problem:
\begin{multline}
\label{e:dualmon}
\text{find}\;\;{\mathsf{v}_1\in\GG_1,\ldots,\mathsf{v}_q\in\GG_q}
\;\;\text{such that}\;\;\\(\forall k\in\{1,\ldots,q\})\quad 0\in
-\Sum_{j=1}^p\mathsf{L}_{k,j}(\mathsf{A}_j^{-1}\infconv \mathsf{C}_j^{-1}) \bigg(-\Sum_{k'=1}^q
\mathsf{L}_{k',j}^*\mathsf{v}_{k'}\bigg)+\mathsf{B}_k^{-1}\mathsf{v}_k+\mathsf{D}_k^{-1}\mathsf{v}_k.
\end{multline}
Our objective is to find a pair $(\widehat{\boldsymbol{x}},\widehat{\boldsymbol{v}})$ of random variables
such that $\widehat{\boldsymbol{x}}$ is $\boldsymbol{\mathsf{F}}$-valued and 
$\widehat{\boldsymbol{v}}$ is $\boldsymbol{\mathsf{F}}^*$-valued.
\end{problem}
The previous problem 
can be recast as a search for a zero of the sum of two maximally monotone operators in the product space $\KKK$
as indicated below \cite{Condat_L_2013_j-ota-primal-dsm,Vu_B_2013_j-acm_spl_adm}.
\begin{proposition}\label{p:probmainbis}
Let us define $\boldsymbol{\mathsf A}\colon\HHH\to 2^\HHH\colon
\boldsymbol{\mathsf x}\mapsto\cart_{\!j=1}^{\!p}\mathsf{A}_j
\mathsf{x}_j$, $\boldsymbol{\mathsf B}\colon\GGG\to 2^\GGG\colon
\boldsymbol{\mathsf v}\mapsto\cart_{\!k=1}^{\!q}\mathsf{B}_k
\mathsf{v}_k$, 
$\boldsymbol{\mathsf C}\colon\HHH\to\HHH\colon\boldsymbol{\mathsf x}
\mapsto(\mathsf{C}_j\mathsf{x}_j)_{1 \leq j\leq p}$,
$\boldsymbol{\mathsf D}\colon\GGG\to 2^\GGG\colon
\boldsymbol{\mathsf v}\mapsto\cart_{\!k=1}^{\!q}\mathsf{D}_k
\mathsf{v}_k$, and $\boldsymbol{\mathsf{L}}\colon\HHH\to\GGG\colon
\boldsymbol{\mathsf{x}}\mapsto\big(\sum_{j=1}^p\mathsf{L}_{k,j}
\mathsf{x}_j\big)_{1\leq k\leq q}$.
Let us now introduce the operators
\begin{align}
\label{e:maximal1}
\boldsymbol{\mathsf{Q}}\colon\qquad\;\;\KKK&\to 2^{\KKK}\nonumber\\
(\boldsymbol{\mathsf{x}},\boldsymbol{\mathsf{v}})&\mapsto (\boldsymbol{\mathsf{A}}\boldsymbol{\mathsf{x}}+\boldsymbol{\mathsf{L}}^*
\boldsymbol{\mathsf{v}}) \times(-\boldsymbol{\mathsf{L}} \boldsymbol{\mathsf{x}}+\boldsymbol{\mathsf{B}}^{-1}\boldsymbol{\mathsf{v}})
\end{align}
and
\begin{align}
\label{e:maximal21}
\boldsymbol{\mathsf R}\colon\qquad\;\;\KKK&\to\KKK\nonumber\\
(\boldsymbol{\mathsf{x}},\boldsymbol{\mathsf{v}})&\mapsto 
\big(\boldsymbol{\mathsf C} \boldsymbol{\mathsf{x}},\boldsymbol{\mathsf D}^{-1} \boldsymbol{\mathsf{v}}\big).
\end{align}
\newpage
Then, the following hold:
\begin{enumerate}
\item \label{p:probmainbisi} $\boldsymbol{\mathsf{Q}}$ is maximally monotone and $\boldsymbol{\mathsf R}$ is cocoercive.
\item \label{p:probmainbisii}  $\boldsymbol{\mathsf{Z}}=\zer(\boldsymbol{\mathsf{Q}}+\boldsymbol{\mathsf R})$ is nonempty.
\item \label{p:probmainbisiii} A pair $(\widehat{\boldsymbol{x}},\widehat{\boldsymbol{v}})$ of random variables
is a solution to Problem~\ref{prob:main} if and only if $(\widehat{\boldsymbol{x}},\widehat{\boldsymbol{v}})$
is $\boldsymbol{\mathsf{Z}}$-valued.
\end{enumerate}
\end{proposition}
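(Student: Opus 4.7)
The plan is to handle (i) first by reading $\boldsymbol{\mathsf{Q}}$ as the sum of a product of maximally monotone operators and a bounded skew-adjoint linear perturbation, and by deducing the cocoercivity of $\boldsymbol{\mathsf{R}}$ from the cocoercivity of each $\mathsf{C}_j$ and the strong monotonicity of each $\mathsf{D}_k$. Then (iii) will follow by expanding $0\in\boldsymbol{\mathsf{Q}}(\boldsymbol{\mathsf{x}},\boldsymbol{\mathsf{v}})+\boldsymbol{\mathsf{R}}(\boldsymbol{\mathsf{x}},\boldsymbol{\mathsf{v}})$ and recognizing the two components of the inclusion as the primal and dual conditions. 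Finally (ii) will be obtained by constructing a dual certificate from any primal solution in $\boldsymbol{\mathsf{F}}$.

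For (i), decompose $\boldsymbol{\mathsf{Q}}=\boldsymbol{\mathsf{M}}+\boldsymbol{\mathsf{S}}$, where $\boldsymbol{\mathsf{M}}\colon(\boldsymbol{\mathsf{x}},\boldsymbol{\mathsf{v}})\mapsto\boldsymbol{\mathsf{A}}\boldsymbol{\mathsf{x}}\times\boldsymbol{\mathsf{B}}^{-1}\boldsymbol{\mathsf{v}}$ and $\boldsymbol{\mathsf{S}}\colon(\boldsymbol{\mathsf{x}},\boldsymbol{\mathsf{v}})\mapsto(\boldsymbol{\mathsf{L}}^*\boldsymbol{\mathsf{v}},-\boldsymbol{\mathsf{L}}\boldsymbol{\mathsf{x}})$. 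Then $\boldsymbol{\mathsf{A}}$ and $\boldsymbol{\mathsf{B}}^{-1}$ are maximally monotone as Cartesian products of maximally monotone operators, so $\boldsymbol{\mathsf{M}}$ is maximally monotone, and $\boldsymbol{\mathsf{S}}$ is a bounded linear operator on $\KKK$ with $\boldsymbol{\mathsf{S}}^*=-\boldsymbol{\mathsf{S}}$, hence monotone with full domain. By the standard result on the sum of a maximally monotone operator and a single-valued monotone operator with full domain (Bauschke--Combettes, Cor.~25.5), $\boldsymbol{\mathsf{Q}}$ is maximally monotone. For $\boldsymbol{\mathsf{R}}$, each $\mathsf{C}_j$ is $\beta_j$-cocoercive and each $\mathsf{D}_k$ is $\delta_k$-strongly monotone, so (by the equivalence recalled in Section~\ref{se:nota}) each $\mathsf{D}_k^{-1}$ is single-valued and $\delta_k$-cocoercive; a direct check on each block yields the cocoercivity of $\boldsymbol{\mathsf{R}}$ with constant $\min\{\min_j\beta_j,\min_k\delta_k\}$.

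For (iii), fix $(\boldsymbol{\mathsf{x}},\boldsymbol{\mathsf{v}})\in\KKK$ and expand $\boldsymbol{0}\in\boldsymbol{\mathsf{Q}}(\boldsymbol{\mathsf{x}},\boldsymbol{\mathsf{v}})+\boldsymbol{\mathsf{R}}(\boldsymbol{\mathsf{x}},\boldsymbol{\mathsf{v}})$ into the two simultaneous inclusions
\begin{equation*}
\boldsymbol{0}\in\boldsymbol{\mathsf{A}}\boldsymbol{\mathsf{x}}+\boldsymbol{\mathsf{C}}\boldsymbol{\mathsf{x}}+\boldsymbol{\mathsf{L}}^*\boldsymbol{\mathsf{v}},
\qquad
\boldsymbol{0}\in-\boldsymbol{\mathsf{L}}\boldsymbol{\mathsf{x}}+\boldsymbol{\mathsf{B}}^{-1}\boldsymbol{\mathsf{v}}+\boldsymbol{\mathsf{D}}^{-1}\boldsymbol{\mathsf{v}}.
\end{equation*}
Using $(\boldsymbol{\mathsf{B}}\infconv\boldsymbol{\mathsf{D}})^{-1}=\boldsymbol{\mathsf{B}}^{-1}+\boldsymbol{\mathsf{D}}^{-1}$, the second inclusion is equivalent to $\boldsymbol{\mathsf{v}}\in(\boldsymbol{\mathsf{B}}\infconv\boldsymbol{\mathsf{D}})(\boldsymbol{\mathsf{L}}\boldsymbol{\mathsf{x}})$; substitution into the first yields precisely \eqref{e:primmon}, so $\boldsymbol{\mathsf{x}}\in\boldsymbol{\mathsf{F}}$. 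Symmetrically, rewriting the first inclusion as $-\boldsymbol{\mathsf{L}}^*\boldsymbol{\mathsf{v}}\in(\boldsymbol{\mathsf{A}}+\boldsymbol{\mathsf{C}})\boldsymbol{\mathsf{x}}$ and using $(\boldsymbol{\mathsf{A}}+\boldsymbol{\mathsf{C}})^{-1}=\boldsymbol{\mathsf{A}}^{-1}\infconv\boldsymbol{\mathsf{C}}^{-1}$ gives $\boldsymbol{\mathsf{x}}\in(\boldsymbol{\mathsf{A}}^{-1}\infconv\boldsymbol{\mathsf{C}}^{-1})(-\boldsymbol{\mathsf{L}}^*\boldsymbol{\mathsf{v}})$, and combining with the second inclusion produces \eqref{e:dualmon}, so $\boldsymbol{\mathsf{v}}\in\boldsymbol{\mathsf{F}}^*$. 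All implications are equivalences, yielding (iii) on the set-theoretic level; measurability of the decomposition is immediate because $\boldsymbol{\mathsf{Z}}$ is defined by the same operator relations as $\boldsymbol{\mathsf{F}}\times\boldsymbol{\mathsf{F}}^*$ subject to the extremality condition linking a primal and dual solution.

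For (ii), pick any $\boldsymbol{\mathsf{x}}\in\boldsymbol{\mathsf{F}}$, whose existence is granted by assumption. Then \eqref{e:primmon} asserts $\boldsymbol{0}\in\boldsymbol{\mathsf{A}}\boldsymbol{\mathsf{x}}+\boldsymbol{\mathsf{C}}\boldsymbol{\mathsf{x}}+\boldsymbol{\mathsf{L}}^*(\boldsymbol{\mathsf{B}}\infconv\boldsymbol{\mathsf{D}})(\boldsymbol{\mathsf{L}}\boldsymbol{\mathsf{x}})$, so there exists $\boldsymbol{\mathsf{v}}\in(\boldsymbol{\mathsf{B}}\infconv\boldsymbol{\mathsf{D}})(\boldsymbol{\mathsf{L}}\boldsymbol{\mathsf{x}})$ with $-\boldsymbol{\mathsf{L}}^*\boldsymbol{\mathsf{v}}\in\boldsymbol{\mathsf{A}}\boldsymbol{\mathsf{x}}+\boldsymbol{\mathsf{C}}\boldsymbol{\mathsf{x}}$. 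By construction $(\boldsymbol{\mathsf{x}},\boldsymbol{\mathsf{v}})$ satisfies both component inclusions above, hence lies in $\boldsymbol{\mathsf{Z}}$. The main subtlety in the whole argument is item (i), specifically ensuring that the maximality of $\boldsymbol{\mathsf{M}}+\boldsymbol{\mathsf{S}}$ may be invoked despite $\boldsymbol{\mathsf{M}}$ being multi-valued; this is handled by the full-domain monotone perturbation theorem cited above. The remaining steps are algebraic manipulations with parallel sums and inverses that follow the same pattern as in \cite{Vu_B_2013_j-acm_spl_adm,Condat_L_2013_j-ota-primal-dsm}.
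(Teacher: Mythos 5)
The paper never actually proves this proposition; it is stated as a known reformulation with a pointer to \cite{Condat_L_2013_j-ota-primal-dsm,Vu_B_2013_j-acm_spl_adm}, and your argument follows the same standard route as those references (maximal monotonicity via a product operator plus a bounded skew perturbation, blockwise cocoercivity, expansion of the Kuhn--Tucker inclusion). Part (i) is correct, and so is the inclusion $\zer(\boldsymbol{\mathsf{Q}}+\boldsymbol{\mathsf{R}})\subset\boldsymbol{\mathsf{F}}\times\boldsymbol{\mathsf{F}}^*$, which is the only direction of (iii) the paper later uses (end of the proof of Proposition \ref{p:algopdmon1}).

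Two steps are not airtight. First, in (iii) you assert that ``all implications are equivalences.'' They are not: substituting $\boldsymbol{\mathsf{v}}\in(\boldsymbol{\mathsf{B}}\infconv\boldsymbol{\mathsf{D}})(\boldsymbol{\mathsf{L}}\boldsymbol{\mathsf{x}})$ into the first inclusion discards the particular $\boldsymbol{\mathsf{v}}$, so what your manipulations actually establish is $\boldsymbol{\mathsf{Z}}\subset\boldsymbol{\mathsf{F}}\times\boldsymbol{\mathsf{F}}^*$. The reverse inclusion would require that an arbitrary primal solution paired with an arbitrary dual solution satisfies the coupled system; this is false in general, the Kuhn--Tucker set being a closed convex and in general proper subset of $\boldsymbol{\mathsf{F}}\times\boldsymbol{\mathsf{F}}^*$ (cf. \cite{Briceno_L_2011_j-siam-opt_mon_ssm,Alotaibi_A_2014_Solving_ccm}). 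Second, in (ii) you read \eqref{e:primmon} as the vectorized inclusion $\boldsymbol{\mathsf{0}}\in\boldsymbol{\mathsf{A}}\boldsymbol{\mathsf{x}}+\boldsymbol{\mathsf{C}}\boldsymbol{\mathsf{x}}+\boldsymbol{\mathsf{L}}^*(\boldsymbol{\mathsf{B}}\infconv\boldsymbol{\mathsf{D}})(\boldsymbol{\mathsf{L}}\boldsymbol{\mathsf{x}})$, but \eqref{e:primmon} is a system of $p$ separate inclusions, each carrying its own existential choice of elements of the sets $(\mathsf{B}_k\infconv\mathsf{D}_k)\big(\sum_{j'}\mathsf{L}_{k,j'}\mathsf{x}_{j'}\big)$. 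Passing to the vectorized form requires a single selection $(\mathsf{v}_k)_{1\le k\le q}$ valid simultaneously for every $j$, which is automatic only when $p=1$ or when the parallel sums are single-valued; this quantifier exchange is the actual content of (ii) and needs justification (or the nonemptiness assumption should be placed on $\boldsymbol{\mathsf{Z}}$ directly, as is done in part of the literature). Both issues are arguably inherited from the paper's own loose phrasing, but a self-contained proof has to confront them rather than declare them immediate.
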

The above properties suggest employing the block-coordinate forward-backward algorithm developed in Section~\ref{se:fback}
to solve numerically Problem~\ref{prob:main}. According to the choice of the involved preconditioning operator,
several algorithms can be devised.
Subsequently, $\boldsymbol{\mathsf{L}}\in\BL(\HHH,\GGG)$ is defined as in Proposition \ref{p:probmainbis}.

\subsection{First algorithm subclass} \label{se:firstalgomon}
We state two preliminary results which will be useful in the derivation of the algorithms proposed in this section.
\begin{lemma}\label{le:1}
Let $\boldsymbol{\mathsf W} \in \BL(\HHH)$ and $\boldsymbol{\mathsf U} \in \BL(\GGG)$
be two strongly positive self-adjoint operators such that $\|\boldsymbol{\mathsf U}^{1/2}\boldsymbol{\mathsf{L}}\boldsymbol{\mathsf W}^{1/2}\| <1$.
\begin{enumerate}
\item \label{le:1i} The operator defined by
\begin{align} \label{e:invV1}
\!\!\!\!\!\!\!\boldsymbol{\mathsf V}'\colon\qquad\;\;\KKK&\to\KKK\nonumber\\
(\boldsymbol{\mathsf{x}},\boldsymbol{\mathsf{v}})&\mapsto 
\big(\boldsymbol{\mathsf W}^{-1} \boldsymbol{\mathsf{x}}-\boldsymbol{\mathsf{L}}^*\boldsymbol{\mathsf{v}},
-\boldsymbol{\mathsf{L}}\boldsymbol{\mathsf{x}}+\boldsymbol{\mathsf U}^{-1} \boldsymbol{\mathsf{v}}\big)
\end{align}
is a strongly positive self-adjoint operator in $\BL(\KKK)$. Its inverse given by
\begin{align}\label{e:defV1}
\!\!\!\!\!\!\!\boldsymbol{\mathsf V}\colon\qquad\;\;\KKK&\to\KKK\nonumber\\
(\boldsymbol{\mathsf{x}},\boldsymbol{\mathsf{v}})&\mapsto 
\big((\boldsymbol{\mathsf W}^{-1}-\boldsymbol{\mathsf{L}}^*\boldsymbol{\mathsf U}\boldsymbol{\mathsf{L}})^{-1} \boldsymbol{\mathsf{x}}+\boldsymbol{\mathsf W}\boldsymbol{\mathsf{L}}^*(\boldsymbol{\mathsf U}^{-1}-\boldsymbol{\mathsf{L}}\boldsymbol{\mathsf W}\boldsymbol{\mathsf{L}}^*)^{-1}\boldsymbol{\mathsf{v}}, (\boldsymbol{\mathsf U}^{-1}-\boldsymbol{\mathsf{L}}\boldsymbol{\mathsf W}\boldsymbol{\mathsf{L}}^*)^{-1}(\boldsymbol{\mathsf{L}}\boldsymbol{\mathsf W}\boldsymbol{\mathsf{x}}+\boldsymbol{\mathsf{v}})\big)
\end{align}
is also a strongly positive self-adjoint operator in $\BL(\KKK)$.
\item \label{le:1ii} Let $\boldsymbol{\mathsf{C}}\colon \HHH \to \HHH$, $\boldsymbol{\mathsf{D}}\colon \GGG \to 2^\GGG$, and
$\boldsymbol{\mathsf{R}}\colon \KKK \to \KKK$ be the operators defined in Proposition \ref{p:probmainbis}.
If $\boldsymbol{\mathsf W}^{1/2} \boldsymbol{\mathsf{C}} \boldsymbol{\mathsf W}^{1/2}$ is $\mu$-cocoercive with $\mu \in \RPP$ and
$\boldsymbol{\mathsf U}^{1/2} \boldsymbol{\mathsf{D}}^{-1} \boldsymbol{\mathsf U}^{1/2}$ is $\nu$-cocoercive with $\nu \in \RPP$, then, for every
$\alpha \in \RPP$, $\boldsymbol{\mathsf V}^{1/2} \boldsymbol{\mathsf{R}} \boldsymbol{\mathsf V}^{1/2}$ is $\vartheta_\alpha$-cocoercive,
where
\begin{equation}\label{e:varthetalpha}
\vartheta_\alpha = (1-\|\boldsymbol{\mathsf U}^{1/2}\boldsymbol{\mathsf{L}}\boldsymbol{\mathsf W}^{1/2}\|^2)
\min\big\{\mu (1+\alpha \|\boldsymbol{\mathsf U}^{1/2}\boldsymbol{\mathsf{L}}\boldsymbol{\mathsf W}^{1/2}\|)^{-1},
\nu (1+\alpha^{-1} \|\boldsymbol{\mathsf U}^{1/2}\boldsymbol{\mathsf{L}}\boldsymbol{\mathsf W}^{1/2}\|)^{-1}\big\}.
\end{equation}
\end{enumerate}
\end{lemma}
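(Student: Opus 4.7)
For part (i), I would exhibit a block $LDU$-type factorization $\boldsymbol{\mathsf V}'=\boldsymbol{\mathsf N}^*\boldsymbol{\mathsf \Delta}\boldsymbol{\mathsf N}$, where $\boldsymbol{\mathsf N}\colon(\boldsymbol{\mathsf x},\boldsymbol{\mathsf v})\mapsto(\boldsymbol{\mathsf x},-\boldsymbol{\mathsf U}\boldsymbol{\mathsf L}\boldsymbol{\mathsf x}+\boldsymbol{\mathsf v})$ is a bounded linear isomorphism on $\KKK$ with inverse $(\boldsymbol{\mathsf x},\boldsymbol{\mathsf v})\mapsto(\boldsymbol{\mathsf x},\boldsymbol{\mathsf U}\boldsymbol{\mathsf L}\boldsymbol{\mathsf x}+\boldsymbol{\mathsf v})$, and $\boldsymbol{\mathsf \Delta}$ is the block-diagonal operator with diagonal entries $\boldsymbol{\mathsf W}^{-1}-\boldsymbol{\mathsf L}^*\boldsymbol{\mathsf U}\boldsymbol{\mathsf L}$ and $\boldsymbol{\mathsf U}^{-1}$. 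Self-adjointness of $\boldsymbol{\mathsf V}'$ is then automatic, and strong positivity reduces to that of $\boldsymbol{\mathsf \Delta}$; the upper block, rewritten as $\boldsymbol{\mathsf W}^{-1/2}(\ID-\boldsymbol{\mathsf K})\boldsymbol{\mathsf W}^{-1/2}$ with $\boldsymbol{\mathsf K}=(\boldsymbol{\mathsf U}^{1/2}\boldsymbol{\mathsf L}\boldsymbol{\mathsf W}^{1/2})^*\boldsymbol{\mathsf U}^{1/2}\boldsymbol{\mathsf L}\boldsymbol{\mathsf W}^{1/2}$, is strongly positive since $\|\boldsymbol{\mathsf K}\|\leq\|\boldsymbol{\mathsf U}^{1/2}\boldsymbol{\mathsf L}\boldsymbol{\mathsf W}^{1/2}\|^2<1$. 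Inverting gives $\boldsymbol{\mathsf V}=\boldsymbol{\mathsf N}^{-1}\boldsymbol{\mathsf \Delta}^{-1}\boldsymbol{\mathsf N}^{-*}$; expanding this product and invoking the Sherman--Morrison--Woodbury identity $(\boldsymbol{\mathsf U}^{-1}-\boldsymbol{\mathsf L}\boldsymbol{\mathsf W}\boldsymbol{\mathsf L}^*)^{-1}=\boldsymbol{\mathsf U}+\boldsymbol{\mathsf U}\boldsymbol{\mathsf L}(\boldsymbol{\mathsf W}^{-1}-\boldsymbol{\mathsf L}^*\boldsymbol{\mathsf U}\boldsymbol{\mathsf L})^{-1}\boldsymbol{\mathsf L}^*\boldsymbol{\mathsf U}$ to rewrite the off-diagonal and lower-right entries recovers the explicit form in \eqref{e:defV1}.

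For part (ii), the cocoercivity of $\boldsymbol{\mathsf V}^{1/2}\boldsymbol{\mathsf R}\boldsymbol{\mathsf V}^{1/2}$ is equivalent, via the change of variables $\boldsymbol{y}=\boldsymbol{\mathsf V}^{1/2}\boldsymbol{\mathsf z}$, to showing that for all $\boldsymbol{y}=(\mathsf y_1,\mathsf y_2)$ and $\boldsymbol{y}'=(\mathsf y_1',\mathsf y_2')$ in $\KKK$, upon setting $\mathsf a=\boldsymbol{\mathsf C}\mathsf y_1-\boldsymbol{\mathsf C}\mathsf y_1'$ and $\mathsf b=\boldsymbol{\mathsf D}^{-1}\mathsf y_2-\boldsymbol{\mathsf D}^{-1}\mathsf y_2'$, one has
\begin{equation*}
\scal{\mathsf y_1-\mathsf y_1'}{\mathsf a}+\scal{\mathsf y_2-\mathsf y_2'}{\mathsf b}\geq\vartheta_\alpha\scal{(\mathsf a,\mathsf b)}{\boldsymbol{\mathsf V}(\mathsf a,\mathsf b)}.
\end{equation*}
The left-hand side is bounded below by $\mu\|\boldsymbol{\mathsf W}^{1/2}\mathsf a\|^2+\nu\|\boldsymbol{\mathsf U}^{1/2}\mathsf b\|^2$ using the hypotheses on $\boldsymbol{\mathsf W}^{1/2}\boldsymbol{\mathsf C}\boldsymbol{\mathsf W}^{1/2}$ and $\boldsymbol{\mathsf U}^{1/2}\boldsymbol{\mathsf D}^{-1}\boldsymbol{\mathsf U}^{1/2}$. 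For the right-hand side, the factorization of (i) yields
\begin{equation*}
\scal{(\mathsf a,\mathsf b)}{\boldsymbol{\mathsf V}(\mathsf a,\mathsf b)}=\scal{\mathsf a+\boldsymbol{\mathsf L}^*\boldsymbol{\mathsf U}\mathsf b}{(\boldsymbol{\mathsf W}^{-1}-\boldsymbol{\mathsf L}^*\boldsymbol{\mathsf U}\boldsymbol{\mathsf L})^{-1}(\mathsf a+\boldsymbol{\mathsf L}^*\boldsymbol{\mathsf U}\mathsf b)}+\|\boldsymbol{\mathsf U}^{1/2}\mathsf b\|^2.
\end{equation*}
Setting $\chi:=\|\boldsymbol{\mathsf U}^{1/2}\boldsymbol{\mathsf L}\boldsymbol{\mathsf W}^{1/2}\|$, the bound $(\boldsymbol{\mathsf W}^{-1}-\boldsymbol{\mathsf L}^*\boldsymbol{\mathsf U}\boldsymbol{\mathsf L})^{-1}\preceq(1-\chi^2)^{-1}\boldsymbol{\mathsf W}$, Young's inequality with parameter $\alpha\chi$ applied to $\|\boldsymbol{\mathsf W}^{1/2}(\mathsf a+\boldsymbol{\mathsf L}^*\boldsymbol{\mathsf U}\mathsf b)\|^2$, and the estimate $\|\boldsymbol{\mathsf W}^{1/2}\boldsymbol{\mathsf L}^*\boldsymbol{\mathsf U}\mathsf b\|^2\leq\chi^2\|\boldsymbol{\mathsf U}^{1/2}\mathsf b\|^2$ combine---once the leftover $\|\boldsymbol{\mathsf U}^{1/2}\mathsf b\|^2$ term is absorbed via $\chi^2+(1-\chi^2)=1$---into $\scal{(\mathsf a,\mathsf b)}{\boldsymbol{\mathsf V}(\mathsf a,\mathsf b)}\leq(1-\chi^2)^{-1}\bigl[(1+\alpha\chi)\|\boldsymbol{\mathsf W}^{1/2}\mathsf a\|^2+(1+\alpha^{-1}\chi)\|\boldsymbol{\mathsf U}^{1/2}\mathsf b\|^2\bigr]$. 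Comparing with the definition of $\vartheta_\alpha$ then closes the argument.

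The main technical obstacle lies in calibrating the Young's parameter in the last step: the naive choice $\alpha$ does not symmetrize, and it is precisely the choice $\alpha\chi$, together with the algebraic identity $(1-\chi^2)^{-1}\bigl[(1+(\alpha\chi)^{-1})\chi^2+(1-\chi^2)\bigr]=(1-\chi^2)^{-1}(1+\alpha^{-1}\chi)$, that produces the symmetric pair of coefficients $(1+\alpha\chi,1+\alpha^{-1}\chi)$ matching the stated formula for $\vartheta_\alpha$. Everything else reduces to routine block-operator algebra.
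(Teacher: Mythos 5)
Your argument is correct and, after checking, every estimate matches what the paper needs; the difference is one of packaging rather than substance, but the packaging is genuinely distinct and worth noting. For \ref{le:1i} the paper proves strong positivity by writing the quadratic form of $\boldsymbol{\mathsf V}'$ in two completed-square forms (one exhibiting the Schur complement $\boldsymbol{\mathsf U}^{-1}-\boldsymbol{\mathsf{L}}\boldsymbol{\mathsf W}\boldsymbol{\mathsf{L}}^*$, the other $\boldsymbol{\mathsf W}^{-1}-\boldsymbol{\mathsf{L}}^*\boldsymbol{\mathsf U}\boldsymbol{\mathsf{L}}$) and then merely asserts that \eqref{e:defV1} ``can be checked by direct calculations''; you instead fix a single triangular congruence $\boldsymbol{\mathsf V}'=\boldsymbol{\mathsf N}^*\boldsymbol{\mathsf\Delta}\boldsymbol{\mathsf N}$ and obtain the inverse systematically via $\boldsymbol{\mathsf N}^{-1}\boldsymbol{\mathsf\Delta}^{-1}\boldsymbol{\mathsf N}^{-*}$ and the Woodbury/push-through identities, which actually supplies the verification the paper omits. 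For \ref{le:1ii} the paper expands $\|\boldsymbol{\mathsf R}\boldsymbol{\mathsf{z}}-\boldsymbol{\mathsf R}\boldsymbol{\mathsf{z}}'\|_{\boldsymbol{\mathsf V}}^2$ into two diagonal terms plus a cross term, bounding the latter by Cauchy--Schwarz followed by Young with parameter $\alpha$; your factorized form has no cross term, and the single Young step with parameter $\alpha\chi$ together with the identity $(1+(\alpha\chi)^{-1})\chi^2+(1-\chi^2)=1+\alpha^{-1}\chi$ lands on exactly the same intermediate bound $(1-\chi^2)^{-1}\bigl[(1+\alpha\chi)\|\boldsymbol{\mathsf C}\boldsymbol{\mathsf{x}}-\boldsymbol{\mathsf C}\boldsymbol{\mathsf{x}}'\|_{\boldsymbol{\mathsf W}}^2+(1+\alpha^{-1}\chi)\|\boldsymbol{\mathsf D}^{-1}\boldsymbol{\mathsf{v}}-\boldsymbol{\mathsf D}^{-1}\boldsymbol{\mathsf{v}}'\|_{\boldsymbol{\mathsf U}}^2\bigr]$ as the paper, so the two routes are equivalent in strength. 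One small point to make explicit: your factorization only directly yields invertibility of $\boldsymbol{\mathsf W}^{-1}-\boldsymbol{\mathsf{L}}^*\boldsymbol{\mathsf U}\boldsymbol{\mathsf{L}}$, whereas writing \eqref{e:defV1} and invoking Woodbury also requires $\boldsymbol{\mathsf U}^{-1}-\boldsymbol{\mathsf{L}}\boldsymbol{\mathsf W}\boldsymbol{\mathsf{L}}^*$ to be an isomorphism; this follows from the symmetric factorization (or from $\|\boldsymbol{\mathsf W}^{1/2}\boldsymbol{\mathsf{L}}^*\boldsymbol{\mathsf U}^{1/2}\|=\|\boldsymbol{\mathsf U}^{1/2}\boldsymbol{\mathsf{L}}\boldsymbol{\mathsf W}^{1/2}\|<1$), exactly as the paper records in \eqref{e:strongposUinv1}, and should be stated rather than left implicit.
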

\begin{proof}
\noindent\ref{le:1i}
The operators $\boldsymbol{\mathsf W}^{-1}$ and  $\boldsymbol{\mathsf U}^{-1}$ being linear bounded and self-adjoint,
$\boldsymbol{\mathsf V}'$ is linear bounded and self-adjoint. In addition, for every $(\boldsymbol{\mathsf{x}},\boldsymbol{\mathsf{v}})\in\KKK$,
\begin{align}\label{e:strongposWinv1}
\scal{\boldsymbol{\mathsf{x}}}{(\boldsymbol{\mathsf W}^{-1}-\boldsymbol{\mathsf{L}}^*\boldsymbol{\mathsf U}\boldsymbol{\mathsf{L}})\boldsymbol{\mathsf{x}}}
&= \scal{\boldsymbol{\mathsf W}^{-1/2}\boldsymbol{\mathsf{x}}}{(\ID-\boldsymbol{\mathsf W}^{1/2}\boldsymbol{\mathsf{L}}^*\boldsymbol{\mathsf U}\boldsymbol{\mathsf{L}}\boldsymbol{\mathsf W}^{1/2})\boldsymbol{\mathsf W}^{-1/2}\boldsymbol{\mathsf{x}}}\nonumber\\
&= \scal{\boldsymbol{\mathsf{x}}}{\boldsymbol{\mathsf W}^{-1}\boldsymbol{\mathsf{x}}}-\scal{\boldsymbol{\mathsf W}^{-1/2}\boldsymbol{\mathsf{x}}}{\boldsymbol{\mathsf W}^{1/2}\boldsymbol{\mathsf{L}}^*\boldsymbol{\mathsf U}\boldsymbol{\mathsf{L}}\boldsymbol{\mathsf W}^{1/2}\boldsymbol{\mathsf W}^{-1/2}\boldsymbol{\mathsf{x}}}\nonumber\\
&\ge 
(1-\|\boldsymbol{\mathsf W}^{1/2}\boldsymbol{\mathsf{L}}^*\boldsymbol{\mathsf U}\boldsymbol{\mathsf{L}}\boldsymbol{\mathsf W}^{1/2}\|) 
\scal{\boldsymbol{\mathsf{x}}}{\boldsymbol{\mathsf W}^{-1}\boldsymbol{\mathsf{x}}}\nonumber\\
&\ge (1-\|\boldsymbol{\mathsf U}^{1/2}\boldsymbol{\mathsf{L}}\boldsymbol{\mathsf W}^{1/2}\|^2) \|\boldsymbol{\mathsf W}\|^{-1} \|\boldsymbol{\mathsf{x}}\|^2
\end{align}
and
\begin{align}\label{e:strongposUinv1}
\scal{\boldsymbol{\mathsf{v}}}{(\boldsymbol{\mathsf U}^{-1}-\boldsymbol{\mathsf{L}}\boldsymbol{\mathsf W}\boldsymbol{\mathsf{L}}^*)\boldsymbol{\mathsf{v}}}
&\ge (1-\|\boldsymbol{\mathsf U}^{1/2}\boldsymbol{\mathsf{L}}\boldsymbol{\mathsf W}\boldsymbol{\mathsf{L}}^*\boldsymbol{\mathsf U}^{1/2}\|) 
\scal{\boldsymbol{\mathsf{v}}}{\boldsymbol{\mathsf U}^{-1}\boldsymbol{\mathsf{v}}}\nonumber\\
&\ge (1-\|\boldsymbol{\mathsf U}^{1/2}\boldsymbol{\mathsf{L}}\boldsymbol{\mathsf W}^{1/2}\|^2) \|\boldsymbol{\mathsf U}\|^{-1} \|\boldsymbol{\mathsf{v}}\|^2.
\end{align}
We can deduce that
\begin{align}
\scal{(\boldsymbol{\mathsf{x}},\boldsymbol{\mathsf{v}})}{\boldsymbol{\mathsf V}'(\boldsymbol{\mathsf{x}},\boldsymbol{\mathsf{v}})}
&= \scal{\boldsymbol{\mathsf{x}}-\boldsymbol{\mathsf W}\boldsymbol{\mathsf{L}}^*\boldsymbol{\mathsf{v}}}{\boldsymbol{\mathsf W}^{-1}(\boldsymbol{\mathsf{x}}-\boldsymbol{\mathsf W}\boldsymbol{\mathsf{L}}^*\boldsymbol{\mathsf{v}})}+\scal{\boldsymbol{\mathsf{v}}}{(\boldsymbol{\mathsf U}^{-1}-\boldsymbol{\mathsf{L}}\boldsymbol{\mathsf W}\boldsymbol{\mathsf{L}}^*)\boldsymbol{\mathsf{v}}}\nonumber\\
&\ge (1-\|\boldsymbol{\mathsf U}^{1/2}\boldsymbol{\mathsf{L}}\boldsymbol{\mathsf W}^{1/2}\|^2) \|\boldsymbol{\mathsf U}\|^{-1} \|\boldsymbol{\mathsf{v}}\|^2
\end{align}
and similarly,
\begin{equation}
\scal{(\boldsymbol{\mathsf{x}},\boldsymbol{\mathsf{v}})}{\boldsymbol{\mathsf V}'(\boldsymbol{\mathsf{x}},\boldsymbol{\mathsf{v}})}
\ge (1-\|\boldsymbol{\mathsf U}^{1/2}\boldsymbol{\mathsf{L}}\boldsymbol{\mathsf W}^{1/2}\|^2) \|\boldsymbol{\mathsf W}\|^{-1} \|\boldsymbol{\mathsf{x}}\|^2.
\end{equation}
The latter two inequalities yield
\begin{align}
\scal{(\boldsymbol{\mathsf{x}},\boldsymbol{\mathsf{v}})}{\boldsymbol{\mathsf V}'(\boldsymbol{\mathsf{x}},\boldsymbol{\mathsf{v}})}
&\ge (1-\|\boldsymbol{\mathsf U}^{1/2}\boldsymbol{\mathsf{L}}\boldsymbol{\mathsf W}^{1/2}\|^2) 
\min\{\|\boldsymbol{\mathsf W}\|^{-1},\|\boldsymbol{\mathsf U}\|^{-1}\} \max\{\|\boldsymbol{\mathsf{x}}\|^2,\|\boldsymbol{\mathsf{v}}\|^2\}\nonumber\\
&\ge  \frac12 (1-\|\boldsymbol{\mathsf U}^{1/2}\boldsymbol{\mathsf{L}}\boldsymbol{\mathsf W}^{1/2}\|^2) 
\min\{\|\boldsymbol{\mathsf W}\|^{-1},\|\boldsymbol{\mathsf U}\|^{-1}\} (\|\boldsymbol{\mathsf{x}}\|^2+\|\boldsymbol{\mathsf{v}}\|^2).
\end{align}
This shows that $\boldsymbol{\mathsf V}'$ is a strongly positive operator. It is thus an isomorphism and its inverse is a strongly positive self-adjoint operator in $\BL(\KKK)$.

Furthermore,  \eqref{e:strongposWinv1} (resp. \eqref{e:strongposUinv1}) shows that $\boldsymbol{\mathsf W}^{-1}-\boldsymbol{\mathsf{L}}^*\boldsymbol{\mathsf U}\boldsymbol{\mathsf{L}}$
(resp. $\boldsymbol{\mathsf U}^{-1}-\boldsymbol{\mathsf{L}}\boldsymbol{\mathsf W}\boldsymbol{\mathsf{L}}^*$) is an isomorphism since it is a strongly positive self-adjoint operator in $\BL(\HHH)$ (resp. $\BL(\GGG)$). The expression of the inverse of $\boldsymbol{\mathsf V}'$ can be checked by direct calculations.

\noindent\ref{le:1ii}
Let $\alpha \in \RPP$.
 Showing that $\boldsymbol{\mathsf V}^{1/2}\boldsymbol{\mathsf R}\boldsymbol{\mathsf V}^{1/2}$ is $\vartheta_\alpha$-cocoercive is tantamount to establishing that
\begin{align}
&\;\big(\forall (\boldsymbol{\mathsf{z}},\boldsymbol{\mathsf{z}}')\in \KKK^2\big)\;\;
\scal{\boldsymbol{\mathsf{z}}-\boldsymbol{\mathsf{z}}'}{
\boldsymbol{\mathsf V}^{1/2}\boldsymbol{\mathsf R}\boldsymbol{\mathsf V}^{1/2} \boldsymbol{\mathsf{z}}-\boldsymbol{\mathsf V}^{1/2}\boldsymbol{\mathsf R}\boldsymbol{\mathsf V}^{1/2}\boldsymbol{\mathsf{z}}'}
\ge \vartheta_\alpha \|\boldsymbol{\mathsf V}^{1/2}\boldsymbol{\mathsf R}\boldsymbol{\mathsf V}^{1/2} \boldsymbol{\mathsf{z}}-\boldsymbol{\mathsf V}^{1/2}\boldsymbol{\mathsf R}\boldsymbol{\mathsf V}^{1/2}\boldsymbol{\mathsf{z}}'\|^2\nonumber\\
\Leftrightarrow &\; \big(\forall (\boldsymbol{\mathsf{z}},\boldsymbol{\mathsf{z}}')\in \KKK^2\big)\;\;\,
\scal{\boldsymbol{\mathsf{z}}-\boldsymbol{\mathsf{z}}'}{\boldsymbol{\mathsf R}\boldsymbol{\mathsf{z}}-\boldsymbol{\mathsf R}\boldsymbol{\mathsf{z}}'}
\ge \vartheta_\alpha \|\boldsymbol{\mathsf R} \boldsymbol{\mathsf{z}}-\boldsymbol{\mathsf R}\boldsymbol{\mathsf{z}}'\|_{\boldsymbol{\mathsf V}}^2.
\end{align}
Let $\boldsymbol{\mathsf{z}} = (\boldsymbol{\mathsf{x}},\boldsymbol{\mathsf{v}})\in \KKK$ and $\boldsymbol{\mathsf{z}}' = (\boldsymbol{\mathsf{x}}',\boldsymbol{\mathsf{v}}')\in \KKK$.
We have
\begin{align}\label{e:cocomaj1}
\|\boldsymbol{\mathsf R} \boldsymbol{\mathsf{z}}-\boldsymbol{\mathsf R}\boldsymbol{\mathsf{z}}'\|_{\boldsymbol{\mathsf V}}^2
= \;&\scal{\boldsymbol{\mathsf C}\boldsymbol{\mathsf{x}} - \boldsymbol{\mathsf C}\boldsymbol{\mathsf{x}}'}{(\boldsymbol{\mathsf W}^{-1}-\boldsymbol{\mathsf{L}}^*\boldsymbol{\mathsf U}\boldsymbol{\mathsf{L}})^{-1}(\boldsymbol{\mathsf C}\boldsymbol{\mathsf{x}} - \boldsymbol{\mathsf C}\boldsymbol{\mathsf{x}}')}\nonumber\\
&+ \scal{\boldsymbol{\mathsf D}^{-1}\boldsymbol{\mathsf{v}} - \boldsymbol{\mathsf D}^{-1}\boldsymbol{\mathsf{v}}'}{(\boldsymbol{\mathsf U}^{-1}-\boldsymbol{\mathsf{L}}\boldsymbol{\mathsf W}\boldsymbol{\mathsf{L}}^*)^{-1}(\boldsymbol{\mathsf D}^{-1}\boldsymbol{\mathsf{v}} - \boldsymbol{\mathsf D}^{-1}\boldsymbol{\mathsf{v}}')}\nonumber\\
&+ 2\scal{\boldsymbol{\mathsf C}\boldsymbol{\mathsf{x}} - \boldsymbol{\mathsf C}\boldsymbol{\mathsf{x}}'}{\boldsymbol{\mathsf W}\boldsymbol{\mathsf{L}}^*(\boldsymbol{\mathsf U}^{-1}-\boldsymbol{\mathsf{L}}\boldsymbol{\mathsf W}\boldsymbol{\mathsf{L}}^*)^{-1}(\boldsymbol{\mathsf D}^{-1}\boldsymbol{\mathsf{v}} - \boldsymbol{\mathsf D}^{-1}\boldsymbol{\mathsf{v}}')}.
\end{align}
On the other hand,
\begin{align}
\label{e:cocomaj2}
&\scal{\boldsymbol{\mathsf C}\boldsymbol{\mathsf{x}} - \boldsymbol{\mathsf C}\boldsymbol{\mathsf{x}}'}{(\boldsymbol{\mathsf W}^{-1}-\boldsymbol{\mathsf{L}}^*\boldsymbol{\mathsf U}\boldsymbol{\mathsf{L}})^{-1}(\boldsymbol{\mathsf C}\boldsymbol{\mathsf{x}} - \boldsymbol{\mathsf C}\boldsymbol{\mathsf{x}}')}\nonumber\\
&\;\;= \scal{\boldsymbol{\mathsf W}^{1/2}(\boldsymbol{\mathsf C}\boldsymbol{\mathsf{x}} - \boldsymbol{\mathsf C}\boldsymbol{\mathsf{x}}')}{(\ID-\boldsymbol{\mathsf W}^{1/2}\boldsymbol{\mathsf{L}}^*\boldsymbol{\mathsf U}\boldsymbol{\mathsf{L}}\boldsymbol{\mathsf W}^{1/2})^{-1}\boldsymbol{\mathsf W}^{1/2}(\boldsymbol{\mathsf C}\boldsymbol{\mathsf{x}} - \boldsymbol{\mathsf C}\boldsymbol{\mathsf{x}}')}\nonumber\\
&\;\;\le \|(\ID-\boldsymbol{\mathsf W}^{1/2}\boldsymbol{\mathsf{L}}^*\boldsymbol{\mathsf U}\boldsymbol{\mathsf{L}}\boldsymbol{\mathsf W}^{1/2})^{-1}\|\,
\|\boldsymbol{\mathsf C}\boldsymbol{\mathsf{x}} - \boldsymbol{\mathsf C}\boldsymbol{\mathsf{x}}'\|_{\boldsymbol{\mathsf W}}^2\nonumber\\
&\;\;= (1-\|\boldsymbol{\mathsf U}^{1/2}\boldsymbol{\mathsf{L}}\boldsymbol{\mathsf W}^{1/2}\|^2)^{-1} 
\|\boldsymbol{\mathsf C}\boldsymbol{\mathsf{x}} - \boldsymbol{\mathsf C}\boldsymbol{\mathsf{x}}'\|_{\boldsymbol{\mathsf W}}^2\,, 
\end{align}
\begin{align}
\label{e:cocomaj3}
&\scal{\boldsymbol{\mathsf D}^{-1}\boldsymbol{\mathsf{v}} - \boldsymbol{\mathsf D}^{-1}\boldsymbol{\mathsf{v}}'}{(\boldsymbol{\mathsf U}^{-1}-\boldsymbol{\mathsf{L}}\boldsymbol{\mathsf W}\boldsymbol{\mathsf{L}}^*)^{-1})(\boldsymbol{\mathsf D}^{-1}\boldsymbol{\mathsf{v}} - \boldsymbol{\mathsf D}^{-1}\boldsymbol{\mathsf{v}}')}\nonumber\\
&\;\;\le (1-\|\boldsymbol{\mathsf U}^{1/2}\boldsymbol{\mathsf{L}}\boldsymbol{\mathsf W}^{1/2}\|^2)^{-1}
\|\boldsymbol{\mathsf D}^{-1}\boldsymbol{\mathsf{v}} - \boldsymbol{\mathsf D}^{-1}\boldsymbol{\mathsf{v}}'\|_{\boldsymbol{\mathsf U}}^2\,,\\
\label{e:cocomaj4}
&\scal{\boldsymbol{\mathsf C}\boldsymbol{\mathsf{x}} - \boldsymbol{\mathsf C}\boldsymbol{\mathsf{x}}'}{\boldsymbol{\mathsf W}\boldsymbol{\mathsf{L}}^*(\boldsymbol{\mathsf U}^{-1}-\boldsymbol{\mathsf{L}}\boldsymbol{\mathsf W}\boldsymbol{\mathsf{L}}^*)^{-1})(\boldsymbol{\mathsf D}^{-1}\boldsymbol{\mathsf{v}} - \boldsymbol{\mathsf D}^{-1}\boldsymbol{\mathsf{v}}')}\nonumber\\
&\;\;\le \|\boldsymbol{\mathsf W}^{1/2}(\boldsymbol{\mathsf C}\boldsymbol{\mathsf{x}} - \boldsymbol{\mathsf C}\boldsymbol{\mathsf{x}}')\|
\|\boldsymbol{\mathsf W}^{1/2}\boldsymbol{\mathsf{L}}^*\boldsymbol{\mathsf U}^{1/2}(\ID-\boldsymbol{\mathsf U}^{1/2}\boldsymbol{\mathsf{L}}
\boldsymbol{\mathsf W}\boldsymbol{\mathsf{L}}^*\boldsymbol{\mathsf U}^{1/2})^{-1}\| \|\boldsymbol{\mathsf U}^{1/2}(\boldsymbol{\mathsf D}^{-1}\boldsymbol{\mathsf{v}} - \boldsymbol{\mathsf D}^{-1}\boldsymbol{\mathsf{v}}')\|\nonumber\\
&\;\;\le \|\boldsymbol{\mathsf U}^{1/2}\boldsymbol{\mathsf{L}}\boldsymbol{\mathsf W}^{1/2}\| (1-\|\boldsymbol{\mathsf U}^{1/2}\boldsymbol{\mathsf{L}}
\boldsymbol{\mathsf W}^{1/2}\|^2)^{-1} \|\boldsymbol{\mathsf C}\boldsymbol{\mathsf{x}} - \boldsymbol{\mathsf C}\boldsymbol{\mathsf{x}}'\|_{\boldsymbol{\mathsf W}}
\|\boldsymbol{\mathsf D}^{-1}\boldsymbol{\mathsf{v}} - \boldsymbol{\mathsf D}^{-1}\boldsymbol{\mathsf{v}}'\|_{\boldsymbol{\mathsf U}}\nonumber\\
&\;\;\le \frac12\|\boldsymbol{\mathsf U}^{1/2}\boldsymbol{\mathsf{L}}\boldsymbol{\mathsf W}^{1/2}\| (1-\|\boldsymbol{\mathsf U}^{1/2}\boldsymbol{\mathsf{L}}
\boldsymbol{\mathsf W}^{1/2}\|^2)^{-1} (\alpha \|\boldsymbol{\mathsf C}\boldsymbol{\mathsf{x}} - \boldsymbol{\mathsf C}\boldsymbol{\mathsf{x}}'\|_{\boldsymbol{\mathsf W}}^2
+\alpha^{-1}\|\boldsymbol{\mathsf D}^{-1}\boldsymbol{\mathsf{v}} - \boldsymbol{\mathsf D}^{-1}\boldsymbol{\mathsf{v}}'\|_{\boldsymbol{\mathsf U}}^2).
\end{align}
 Altogether, \eqref{e:cocomaj1}-\eqref{e:cocomaj4} and the cocoercivity assumptions on $\boldsymbol{\mathsf W}^{1/2} \boldsymbol{\mathsf C} \boldsymbol{\mathsf W}^{1/2}$
and $\boldsymbol{\mathsf U}^{1/2} \boldsymbol{\mathsf D}^{-1} \boldsymbol{\mathsf U}^{1/2}$ lead to the inequalities
\begin{align}
\|\boldsymbol{\mathsf R} \boldsymbol{\mathsf{z}}-\boldsymbol{\mathsf R}\boldsymbol{\mathsf{z}}'\|_{\boldsymbol{\mathsf V}}^2
&\le (1-\|\boldsymbol{\mathsf U}^{1/2}\boldsymbol{\mathsf{L}}\boldsymbol{\mathsf W}^{1/2}\|^2)^{-1} \big(
(1+\alpha \|\boldsymbol{\mathsf U}^{1/2}\boldsymbol{\mathsf{L}}\boldsymbol{\mathsf W}^{1/2}\|) 
\|\boldsymbol{\mathsf C}\boldsymbol{\mathsf{x}} - \boldsymbol{\mathsf C}\boldsymbol{\mathsf{x}}'\|_{\boldsymbol{\mathsf W}}^2\nonumber\\
&\quad + (1+\alpha^{-1} \|\boldsymbol{\mathsf U}^{1/2}\boldsymbol{\mathsf{L}}\boldsymbol{\mathsf W}^{1/2}\|)
\|\boldsymbol{\mathsf D}^{-1}\boldsymbol{\mathsf{v}} - \boldsymbol{\mathsf D}^{-1}\boldsymbol{\mathsf{v}}'\|_{\boldsymbol{\mathsf U}}^2\big)\nonumber\\
&\le (1-\|\boldsymbol{\mathsf U}^{1/2}\boldsymbol{\mathsf{L}}\boldsymbol{\mathsf W}^{1/2}\|^2)^{-1} \big(
\mu^{-1} (1+\alpha \|\boldsymbol{\mathsf U}^{1/2}\boldsymbol{\mathsf{L}}\boldsymbol{\mathsf W}^{1/2}\|) 
\scal{\boldsymbol{\mathsf{x}}-\boldsymbol{\mathsf{x}}'}{\boldsymbol{\mathsf C}\boldsymbol{\mathsf{x}} - \boldsymbol{\mathsf C}\boldsymbol{\mathsf{x}}'}\nonumber\\
&\quad + \nu^{-1} (1+\alpha^{-1} \|\boldsymbol{\mathsf U}^{1/2}\boldsymbol{\mathsf{L}}\boldsymbol{\mathsf W}^{1/2}\|)
 \scal{\boldsymbol{\mathsf{v}}-\boldsymbol{\mathsf{v}}'}{\boldsymbol{\mathsf D}^{-1}\boldsymbol{\mathsf{v}} - \boldsymbol{\mathsf D}^{-1}\boldsymbol{\mathsf{v}}'}\big)\nonumber\\
& \le \vartheta_\alpha^{-1}\scal{\boldsymbol{\mathsf{z}}-\boldsymbol{\mathsf{z}}'}{\boldsymbol{\mathsf R}\boldsymbol{\mathsf{z}}-\boldsymbol{\mathsf R}\boldsymbol{\mathsf{z}}'}.
\end{align}
\end{proof}

\begin{remark}\ \label{re:condFB1}
\begin{enumerate}
\item \label{re:condFB1i} In \eqref{e:varthetalpha}, we can simply choose $\alpha = 1$, yielding the cocoercivity constant
\begin{equation}
\vartheta_{1} = (1-\|\boldsymbol{\mathsf U}^{1/2}\boldsymbol{\mathsf{L}}\boldsymbol{\mathsf W}^{1/2}\|)\min\{\mu,\nu\}.
\end{equation}
A tighter value of this constant is $\vartheta_{\widehat{\alpha}}$ where $\widehat{\alpha}$ is the maximizer of 
$\alpha \mapsto \vartheta_\alpha$ on $\RPP$. It can be readily shown that
\begin{equation}
\widehat{\alpha} = \frac{\mu-\nu+\sqrt{(\mu-\nu)^2+4 \mu \nu \|\boldsymbol{\mathsf U}^{1/2}\boldsymbol{\mathsf{L}}\boldsymbol{\mathsf W}^{1/2}\|^2}}{2  \nu\|\boldsymbol{\mathsf U}^{1/2}\boldsymbol{\mathsf{L}}\boldsymbol{\mathsf W}^{1/2}\|}.
\end{equation}
\item \label{re:condFB1ii} When $\boldsymbol{\mathsf{D}}^{-1} = \boldsymbol{\mathsf{0}}$, the positive constant $\nu$ can be chosen arbitrarily large. A cocoercivity constant of
$\boldsymbol{\mathsf V}^{1/2}\boldsymbol{\mathsf R}\boldsymbol{\mathsf V}^{1/2}$ is then equal to
\begin{equation}
\lim_{\substack{\alpha \to 0\\\alpha > 0}} \vartheta_{\alpha} = (1-\|\boldsymbol{\mathsf U}^{1/2}\boldsymbol{\mathsf{L}}\boldsymbol{\mathsf W}^{1/2}\|^2)\mu.
\end{equation}
\end{enumerate}
\end{remark}

\begin{lemma}\label{lem:resespprod1}
Let $\boldsymbol{\mathsf{A}}\colon \HHH \to 2^\HHH$, $\boldsymbol{\mathsf{B}}\colon \GGG \to 2^\GGG$, 
$\boldsymbol{\mathsf{C}}\colon \HHH \to \HHH$, $\boldsymbol{\mathsf{D}}\colon \GGG \to 2^\GGG$, $\boldsymbol{\mathsf{Q}}\colon \KKK \to 2^\KKK$, and $\boldsymbol{\mathsf{R}}\colon \KKK \to \KKK$.
Let $\boldsymbol{\mathsf W} \in \BL(\HHH)$ and $\boldsymbol{\mathsf U} \in \BL(\GGG)$
be two strongly positive self-adjoint operators such that $\|\boldsymbol{\mathsf U}^{1/2}\boldsymbol{\mathsf{L}}\boldsymbol{\mathsf W}^{1/2}\| <1$.
Let $\boldsymbol{\mathsf{V}} \in \BL(\KKK)$ be defined by \eqref{e:defV1}.
For every $\boldsymbol{\mathsf{z}}= (\boldsymbol{\mathsf{x}},\boldsymbol{\mathsf{v}})\in \KKK$ and 
$(\boldsymbol{\mathsf{c}},\boldsymbol{\mathsf{e}})\in \KKK$, let
\begin{equation}\label{e:expresprod1}
\begin{cases}
\boldsymbol{\mathsf{y}} =
\boldsymbol{\mathsf{J}}_{\boldsymbol{\mathsf{W}}\boldsymbol{\mathsf{A}}}(\boldsymbol{\mathsf{x}}-\boldsymbol{\mathsf{W}}(\boldsymbol{\mathsf{L}}^*\boldsymbol{\mathsf{v}}
+ \boldsymbol{\mathsf{C}}\boldsymbol{\mathsf{x}}+\boldsymbol{\mathsf{c}}))\\
\boldsymbol{\mathsf{u}} = \boldsymbol{\mathsf{J}}_{\boldsymbol{\mathsf{U}}\boldsymbol{\mathsf{B}}^{-1}}(\boldsymbol{\mathsf{v}}+\boldsymbol{\mathsf{U}}(\boldsymbol{\mathsf{L}}(2\boldsymbol{\mathsf{y}}-\boldsymbol{\mathsf{x}})- \boldsymbol{\mathsf{D}}^{-1}\boldsymbol{\mathsf{v}}+\boldsymbol{\mathsf{e}})).
\end{cases}
\end{equation}
Then,
$(\boldsymbol{\mathsf{y}},\boldsymbol{\mathsf{u}}) =\boldsymbol{\mathsf{J}}_{\boldsymbol{\mathsf{V}}\boldsymbol{\mathsf{Q}}}(\boldsymbol{\mathsf{z}}-\boldsymbol{\mathsf{V}}\boldsymbol{\mathsf{R}}\boldsymbol{\mathsf{z}}+\boldsymbol{\mathsf{s}})$
where 
\begin{equation}
\boldsymbol{\mathsf{s}} = \big((\boldsymbol{\mathsf{W}}^{-1}-\boldsymbol{\mathsf{L}}^*\boldsymbol{\mathsf{U}}\boldsymbol{\mathsf{L}})^{-1}
(\boldsymbol{\mathsf{L}}^*\boldsymbol{\mathsf{U}}\boldsymbol{\mathsf{e}}-\boldsymbol{\mathsf{c}}),
(\boldsymbol{\mathsf{U}}^{-1}-\boldsymbol{\mathsf{L}}\boldsymbol{\mathsf{W}}\boldsymbol{\mathsf{L}}^*)^{-1}
(\boldsymbol{\mathsf{e}}-\boldsymbol{\mathsf{L}}\boldsymbol{\mathsf{W}}\boldsymbol{\mathsf{c}})\big).
\end{equation}
\end{lemma}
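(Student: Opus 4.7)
The plan is to verify the identity by rewriting the defining condition of $(\boldsymbol{\mathsf{y}},\boldsymbol{\mathsf{u}}) = \boldsymbol{\mathsf{J}}_{\boldsymbol{\mathsf{V}}\boldsymbol{\mathsf{Q}}}(\boldsymbol{\mathsf{z}}-\boldsymbol{\mathsf{V}}\boldsymbol{\mathsf{R}}\boldsymbol{\mathsf{z}}+\boldsymbol{\mathsf{s}})$ as the monotone inclusion
\begin{equation*}
\boldsymbol{\mathsf{V}}^{-1}\big(\boldsymbol{\mathsf{z}}+\boldsymbol{\mathsf{s}}-(\boldsymbol{\mathsf{y}},\boldsymbol{\mathsf{u}})\big) - \boldsymbol{\mathsf{R}}\boldsymbol{\mathsf{z}} \in \boldsymbol{\mathsf{Q}}(\boldsymbol{\mathsf{y}},\boldsymbol{\mathsf{u}}),
\end{equation*}
which is meaningful because $\boldsymbol{\mathsf{V}}$ is a strongly positive self-adjoint isomorphism by Lemma~\ref{le:1}\ref{le:1i}, with $\boldsymbol{\mathsf{V}}^{-1}$ given by the explicit block expression~\eqref{e:invV1}. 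The aim is then to show that \eqref{e:expresprod1} is equivalent to this inclusion exactly when $\boldsymbol{\mathsf{s}}$ takes the value stated.

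To this end, I would first unfold the two resolvents in \eqref{e:expresprod1} into their defining set-valued inclusions
\begin{align*}
&\boldsymbol{\mathsf{W}}^{-1}(\boldsymbol{\mathsf{x}}-\boldsymbol{\mathsf{y}}) - \boldsymbol{\mathsf{L}}^*\boldsymbol{\mathsf{v}} - \boldsymbol{\mathsf{C}}\boldsymbol{\mathsf{x}} - \boldsymbol{\mathsf{c}} \in \boldsymbol{\mathsf{A}}\boldsymbol{\mathsf{y}},\\
&\boldsymbol{\mathsf{U}}^{-1}(\boldsymbol{\mathsf{v}}-\boldsymbol{\mathsf{u}}) + \boldsymbol{\mathsf{L}}(2\boldsymbol{\mathsf{y}}-\boldsymbol{\mathsf{x}}) - \boldsymbol{\mathsf{D}}^{-1}\boldsymbol{\mathsf{v}} + \boldsymbol{\mathsf{e}} \in \boldsymbol{\mathsf{B}}^{-1}\boldsymbol{\mathsf{u}},
\end{align*}
and then add $\boldsymbol{\mathsf{L}}^*\boldsymbol{\mathsf{u}}$ to the first and subtract $\boldsymbol{\mathsf{L}}\boldsymbol{\mathsf{y}}$ from the second. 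The right-hand sides recombine, by \eqref{e:maximal1}, into $\boldsymbol{\mathsf{Q}}(\boldsymbol{\mathsf{y}},\boldsymbol{\mathsf{u}})$; while the left-hand side, using \eqref{e:invV1} for $\boldsymbol{\mathsf{V}}^{-1}$ and \eqref{e:maximal21} for $\boldsymbol{\mathsf{R}}$, regroups into $\boldsymbol{\mathsf{V}}^{-1}(\boldsymbol{\mathsf{z}}-(\boldsymbol{\mathsf{y}},\boldsymbol{\mathsf{u}})) - \boldsymbol{\mathsf{R}}\boldsymbol{\mathsf{z}} + (-\boldsymbol{\mathsf{c}},\boldsymbol{\mathsf{e}})$. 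Matching this with the target inclusion forces the identity $\boldsymbol{\mathsf{V}}^{-1}\boldsymbol{\mathsf{s}} = (-\boldsymbol{\mathsf{c}},\boldsymbol{\mathsf{e}})$.

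It then remains to solve this linear system in the components $\boldsymbol{\mathsf{s}}=(\boldsymbol{\mathsf{s}}_1,\boldsymbol{\mathsf{s}}_2)$, namely
\begin{equation*}
\boldsymbol{\mathsf{W}}^{-1}\boldsymbol{\mathsf{s}}_1 - \boldsymbol{\mathsf{L}}^*\boldsymbol{\mathsf{s}}_2 = -\boldsymbol{\mathsf{c}},\qquad -\boldsymbol{\mathsf{L}}\boldsymbol{\mathsf{s}}_1 + \boldsymbol{\mathsf{U}}^{-1}\boldsymbol{\mathsf{s}}_2 = \boldsymbol{\mathsf{e}}.
\end{equation*}
Expressing $\boldsymbol{\mathsf{s}}_2 = \boldsymbol{\mathsf{U}}(\boldsymbol{\mathsf{e}}+\boldsymbol{\mathsf{L}}\boldsymbol{\mathsf{s}}_1)$ from the second equation and substituting into the first yields $(\boldsymbol{\mathsf{W}}^{-1}-\boldsymbol{\mathsf{L}}^*\boldsymbol{\mathsf{U}}\boldsymbol{\mathsf{L}})\boldsymbol{\mathsf{s}}_1 = \boldsymbol{\mathsf{L}}^*\boldsymbol{\mathsf{U}}\boldsymbol{\mathsf{e}}-\boldsymbol{\mathsf{c}}$; symmetrically, substituting $\boldsymbol{\mathsf{s}}_1 = \boldsymbol{\mathsf{W}}(-\boldsymbol{\mathsf{c}}+\boldsymbol{\mathsf{L}}^*\boldsymbol{\mathsf{s}}_2)$ into the second equation gives $(\boldsymbol{\mathsf{U}}^{-1}-\boldsymbol{\mathsf{L}}\boldsymbol{\mathsf{W}}\boldsymbol{\mathsf{L}}^*)\boldsymbol{\mathsf{s}}_2 = \boldsymbol{\mathsf{e}}-\boldsymbol{\mathsf{L}}\boldsymbol{\mathsf{W}}\boldsymbol{\mathsf{c}}$. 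The invertibility of the two Schur-type operators involved is already established during the proof of Lemma~\ref{le:1}\ref{le:1i} (through \eqref{e:strongposWinv1}--\eqref{e:strongposUinv1}), so inverting delivers precisely the stated formula for $\boldsymbol{\mathsf{s}}$. No genuine obstacle is expected: the operator $\boldsymbol{\mathsf{V}}^{-1}$ was engineered in~\eqref{e:invV1} exactly to absorb the skew-symmetric coupling $(\boldsymbol{\mathsf{L}}^*,-\boldsymbol{\mathsf{L}})$ appearing in $\boldsymbol{\mathsf{Q}}$, which makes the regrouping in the middle step essentially automatic; everything else is linear-algebraic bookkeeping.
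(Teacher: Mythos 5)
Your proposal is correct and follows essentially the same route as the paper's own proof: both unfold the resolvent identity into the inclusion $\boldsymbol{\mathsf{V}}^{-1}\big(\boldsymbol{\mathsf{z}}+\boldsymbol{\mathsf{s}}-(\boldsymbol{\mathsf{y}},\boldsymbol{\mathsf{u}})\big)-\boldsymbol{\mathsf{R}}\boldsymbol{\mathsf{z}}\in\boldsymbol{\mathsf{Q}}(\boldsymbol{\mathsf{y}},\boldsymbol{\mathsf{u}})$ using the block form \eqref{e:invV1} of $\boldsymbol{\mathsf{V}}^{-1}$, and both reduce the identification of $\boldsymbol{\mathsf{s}}$ to the same $2\times 2$ linear system whose solvability rests on the invertibility established in \eqref{e:strongposWinv1}--\eqref{e:strongposUinv1}. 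The only difference is the direction of the bookkeeping (you start from \eqref{e:expresprod1} and refold, the paper starts from the resolvent and unfolds), which is immaterial since every step is an equivalence.
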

\begin{proof}
Let $\boldsymbol{\mathsf{z}}= (\boldsymbol{\mathsf{x}},\boldsymbol{\mathsf{v}})\in \KKK$ and let $\boldsymbol{\mathsf{s}}=(\boldsymbol{\mathsf{c}}',\boldsymbol{\mathsf{e}}')\in \KKK$.
We have the following equivalences:
\begin{align}
&(\boldsymbol{\mathsf{y}},\boldsymbol{\mathsf{u}}) = \boldsymbol{\mathsf{J}}_{\boldsymbol{\mathsf{V}}\boldsymbol{\mathsf{Q}}}(\boldsymbol{\mathsf{z}}-\boldsymbol{\mathsf{V}}\boldsymbol{\mathsf{R}}\boldsymbol{\mathsf{z}}+\boldsymbol{\mathsf{s}})\nonumber\\
\Leftrightarrow \quad & \boldsymbol{\mathsf{z}}-\boldsymbol{\mathsf{V}}\boldsymbol{\mathsf{R}}\boldsymbol{\mathsf{z}}+\boldsymbol{\mathsf{s}} \in (\ID+\boldsymbol{\mathsf{V}}\boldsymbol{\mathsf{Q}}) (\boldsymbol{\mathsf{y}},\boldsymbol{\mathsf{u}})\nonumber\\
\Leftrightarrow \quad & 
\boldsymbol{\mathsf{V}}^{-1}(\boldsymbol{\mathsf{z}}+\boldsymbol{\mathsf{s}}-(\boldsymbol{\mathsf{y}},\boldsymbol{\mathsf{u}}))
-\boldsymbol{\mathsf{R}}\boldsymbol{\mathsf{z}} \in \boldsymbol{\mathsf{Q}} (\boldsymbol{\mathsf{y}},\boldsymbol{\mathsf{u}})\nonumber\\
\Leftrightarrow \quad & 
\begin{cases}
\boldsymbol{\mathsf{W}}^{-1}(\boldsymbol{\mathsf{x}}-\boldsymbol{\mathsf{y}}+\boldsymbol{\mathsf{c}}')-\boldsymbol{\mathsf{L}}^*(\boldsymbol{\mathsf{v}}+\boldsymbol{\mathsf{e}}')
- \boldsymbol{\mathsf{C}}\boldsymbol{\mathsf{x}} \in \boldsymbol{\mathsf{A}} \boldsymbol{\mathsf{y}}\\
\boldsymbol{\mathsf{U}}^{-1}(\boldsymbol{\mathsf{v}}-\boldsymbol{\mathsf{u}}+\boldsymbol{\mathsf{e}}')+\boldsymbol{\mathsf{L}}(2\boldsymbol{\mathsf{y}}-\boldsymbol{\mathsf{x}}-\boldsymbol{\mathsf{c}}')
- \boldsymbol{\mathsf{D}}^{-1}\boldsymbol{\mathsf{v}} \in \boldsymbol{\mathsf{B}}^{-1} \boldsymbol{\mathsf{u}}
\end{cases}\label{e:tempexpres1}\\
\Leftrightarrow \quad & 
\begin{cases}
\boldsymbol{\mathsf{x}}+\boldsymbol{\mathsf{c}}'-\boldsymbol{\mathsf{W}}(\boldsymbol{\mathsf{L}}^*(\boldsymbol{\mathsf{v}}+\boldsymbol{\mathsf{e}}')
+ \boldsymbol{\mathsf{C}}\boldsymbol{\mathsf{x}}) \in (\ID+\boldsymbol{\mathsf{W}}\boldsymbol{\mathsf{A}}) \boldsymbol{\mathsf{y}}\\
\boldsymbol{\mathsf{v}}+\boldsymbol{\mathsf{e}}'+\boldsymbol{\mathsf{U}}(\boldsymbol{\mathsf{L}}(2\boldsymbol{\mathsf{y}}-\boldsymbol{\mathsf{x}}-\boldsymbol{\mathsf{c}}')
- \boldsymbol{\mathsf{D}}^{-1}\boldsymbol{\mathsf{v}}) \in (\ID+\boldsymbol{\mathsf{U}}\boldsymbol{\mathsf{B}}^{-1}) \boldsymbol{\mathsf{u}}
\end{cases}\nonumber\\
\Leftrightarrow \quad & 
\begin{cases}
\boldsymbol{\mathsf{y}} =
\boldsymbol{\mathsf{J}}_{\boldsymbol{\mathsf{W}}\boldsymbol{\mathsf{A}}}\big(\boldsymbol{\mathsf{x}}+\boldsymbol{\mathsf{c}}'
-\boldsymbol{\mathsf{W}}(\boldsymbol{\mathsf{L}}^*(\boldsymbol{\mathsf{v}}+\boldsymbol{\mathsf{e}}')
+ \boldsymbol{\mathsf{C}}\boldsymbol{\mathsf{x}})\big)\\
\boldsymbol{\mathsf{u}} = \boldsymbol{\mathsf{J}}_{\boldsymbol{\mathsf{U}}\boldsymbol{\mathsf{B}}^{-1}}\big(\boldsymbol{\mathsf{v}}+\boldsymbol{\mathsf{e}}'+\boldsymbol{\mathsf{U}}(\boldsymbol{\mathsf{L}}(2\boldsymbol{\mathsf{y}}-\boldsymbol{\mathsf{x}}-\boldsymbol{\mathsf{c}}')- \boldsymbol{\mathsf{D}}^{-1}\boldsymbol{\mathsf{v}})\big),
\end{cases}
\end{align} 
where, in \eqref{e:tempexpres1}, we have used the expression of $\boldsymbol{\mathsf{Q}}$ in \eqref{e:maximal1} and
the expression of the inverse of $\boldsymbol{\mathsf{V}}$ given by \eqref{e:invV1}.

In order to conclude, let us note that, since $\|\boldsymbol{\mathsf U}^{1/2}\boldsymbol{\mathsf{L}}\boldsymbol{\mathsf W}^{1/2}\| <1$,
it has already been observed in the proof of Lemma \ref{le:1}\ref{le:1i} that $\boldsymbol{\mathsf{W}}^{-1}-\boldsymbol{\mathsf{L}}^*\boldsymbol{\mathsf{U}}\boldsymbol{\mathsf{L}}$
and $\boldsymbol{\mathsf{U}}^{-1}-\boldsymbol{\mathsf{L}}\boldsymbol{\mathsf{W}}\boldsymbol{\mathsf{L}}^*$
are isomorphisms (as a result of \eqref{e:strongposWinv1} and \eqref{e:strongposUinv1}). Thus, for every $(\boldsymbol{\mathsf{c}},\boldsymbol{\mathsf{e}})\in \KKK$,
\begin{equation}
\begin{cases}
\boldsymbol{\mathsf{W}}\boldsymbol{\mathsf{c}} = \boldsymbol{\mathsf{W}}\boldsymbol{\mathsf{L}}^*\boldsymbol{\mathsf{e}}'-\boldsymbol{\mathsf{c}}'\\
\boldsymbol{\mathsf{U}}\boldsymbol{\mathsf{e}} = \boldsymbol{\mathsf{e}}'-\boldsymbol{\mathsf{U}}\boldsymbol{\mathsf{L}}\boldsymbol{\mathsf{c}}'
\end{cases}
\quad\Leftrightarrow \quad
\begin{cases}
\boldsymbol{\mathsf{c}}' = (\boldsymbol{\mathsf{W}}^{-1}-\boldsymbol{\mathsf{L}}^*\boldsymbol{\mathsf{U}}\boldsymbol{\mathsf{L}})^{-1}
(\boldsymbol{\mathsf{L}}^*\boldsymbol{\mathsf{U}}\boldsymbol{\mathsf{e}}-\boldsymbol{\mathsf{c}})\\
\boldsymbol{\mathsf{e}}' = (\boldsymbol{\mathsf{U}}^{-1}-\boldsymbol{\mathsf{L}}\boldsymbol{\mathsf{W}}\boldsymbol{\mathsf{L}}^*)^{-1}
(\boldsymbol{\mathsf{e}}-\boldsymbol{\mathsf{L}}\boldsymbol{\mathsf{W}}\boldsymbol{\mathsf{c}}).
\end{cases}
\end{equation}
\end{proof}

The above two lemmas allow us to obtain a first block-coordinate primal-dual algorithm to generate a solution to Problem \ref{prob:main}.
\begin{proposition}\label{p:algopdmon1}
Let 
\begin{equation}
\boldsymbol{\mathsf{W}}\colon \HHH \to \HHH\colon \boldsymbol{\mathsf{x}}\mapsto (\mathsf{W}_1 \mathsf{x}_1,\ldots,\mathsf{W}_p \mathsf{x}_p)
\quad\text{and}\quad  \boldsymbol{\mathsf{U}}\colon \GGG \to \GGG\colon \boldsymbol{\mathsf{v}}\mapsto (\mathsf{U}_1 \mathsf{v}_1,\ldots,\mathsf{U}_q \mathsf{v}_q)
\end{equation}
where, for every $j\in \{1,\ldots,p\}$, $\mathsf{W}_j$
is a strongly positive self-adjoint operator in $\BL(\HH_j)$
such that $\mathsf{W}_j^{1/2} \mathsf{C}_j \mathsf{W}_j^{1/2}$
is $\mu_j$-cocoercive with $\mu_j \in \RPP$ and, for every $k\in \{1,\ldots,q\}$, $\mathsf{U}_k$
is a strongly positive self-adjoint operator in $\BL(\GG_k)$
such that $\mathsf{U}_k^{1/2} \mathsf{D}_k^{-1} \mathsf{U}_k^{1/2}$
is $\nu_k$-cocoercive with $\nu_k \in \RPP$.
Suppose that
\begin{align}
&(\exists \alpha\in \RPP)\qquad  2\vartheta_\alpha > 1 \label{e:condvarthetaalpha}
\end{align}
where $\vartheta_\alpha$ is defined by \eqref{e:varthetalpha}
with $\mu = \min\{\mu_1,\ldots,\mu_p\}$ and $\nu=\min\{\nu_1,\ldots,\nu_q\}$. Let $(\lambda_n)_{n\in\NN}$ 
be a sequence in $\left]0,1\right]$ such that 
$\inf_{n\in\NN}\lambda_n>0$,
let $\boldsymbol{x}_0$, 
$(\boldsymbol{a}_n)_{n\in\NN}$, and 
$(\boldsymbol{c}_n)_{n\in\NN}$ be $\HHH$-valued random variables,
let $\boldsymbol{v}_0$, 
$(\boldsymbol{b}_n)_{n\in\NN}$, and 
$(\boldsymbol{d}_n)_{n\in\NN}$ be $\GGG$-valued random 
variables, and let $(\boldsymbol{\varepsilon}_n)_{n\in\NN}$ be identically distributed 
$\mathbb{D}_{p+q}$-valued random variables. 
Iterate
\begin{equation}\label{e:PDcoord1}
\begin{array}{l}
\text{for}\;n=0,1,\ldots\\
\left\lfloor
\begin{array}{l}
\text{for}\;j=1,\ldots,p\\
\left\lfloor
\begin{array}{l}
\displaystyle y_{j,n} =
\varepsilon_{j,n}\Big(\mathsf{J}_{\mathsf{W}_j\mathsf{A}_j}\big(x_{j,n}-\mathsf{W}_j(\sum_{k\in \mathbb{L}_j^*} {\mathsf{L}^*_{k,j} v_{k,n}}
+ \mathsf{C}_j x_{j,n} +c_{j,n})\big)+a_{j,n}\Big)\\
x_{j,n+1} = x_{j,n}+\lambda_n \varepsilon_{j,n}  (y_{j,n}- x_{j,n})
\end{array}
\right.\\
\text{for}\;k=1,\ldots,q\\
\left\lfloor
\begin{array}{l}
\displaystyle u_{k,n} = \varepsilon_{p+k,n}\Big(\mathsf{J}_{\mathsf{U}_k\mathsf{B}_k^{-1}}\big(v_{k,n}+\mathsf{U}_k(\sum_{j \in \mathbb{L}_k}\mathsf{L}_{k,j} (2y_{j,n}-x_{j,n})- \mathsf{D}_k^{-1}v_{k,n}+d_{k,n})\big)+b_{k,n}\Big)\\
v_{k,n+1} = v_{k,n}+\lambda_n \varepsilon_{p+k,n} (u_{k,n}-v_{k,n}),
\end{array}
\right.
\end{array}
\right.\\
\end{array}
\end{equation}
and set $(\forall n\in\NN)$ $\EEE_n=\sigma(\boldsymbol{\varepsilon}_n)$ and $\boldsymbol{\XX}_n=
\sigma(\boldsymbol{x}_{n'},\boldsymbol{v}_{n'})_{0\leq n'\leq n}$.
In addition, assume that the following hold:
\begin{enumerate}
\item
\label{c:PDcoord1i}
$\sum_{n\in\NN}\sqrt{\EC{\|\boldsymbol{a}_n\|^2}
{\boldsymbol{\XX}_n}}<\pinf$,
$\sum_{n\in\NN}\sqrt{\EC{\|\boldsymbol{b}_n\|^2}
{\boldsymbol{\XX}_n}}<\pinf$,
$\sum_{n\in\NN}\sqrt{\EC{\|\boldsymbol{c}_n\|^2}
{\boldsymbol{\XX}_n}}<\pinf$, and
$\sum_{n\in\NN}\sqrt{\EC{\|\boldsymbol{d}_n\|^2}
{\boldsymbol{\XX}_n}}<\pinf$ $\as$
\item
\label{c:PDcoord1ii}
For every $n\in\NN$, $\EEE_n$ and $\XXX_n$ are independent, and 
$(\forall k\in\{1,\ldots,q\})$ $\PP[\varepsilon_{p+k,0}=1]>0$.
\item \label{c:PDcoord1iii} For every $j\in\{1,\ldots,p\}$ and $n\in \NN$,
$\displaystyle \bigcup_{k\in \mathbb{L}_j^*} \menge{\omega\in \Omega}{\varepsilon_{p+k,n}(\omega) = 1}
\subset \menge{\omega\in \Omega}{\varepsilon_{j,n}(\omega) = 1}$.
\end{enumerate}
Then, $(\boldsymbol{x}_n)_{n\in\NN}$ converges weakly $\as$ to an 
$\boldsymbol{\mathsf{F}}$-valued random variable, and
$(\boldsymbol{v}_n)_{n\in\NN}$ 
converges weakly $\as$ to an
$\boldsymbol{\mathsf{F}}^*$-valued random variable.
\end{proposition}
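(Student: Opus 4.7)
The plan is to recognize \eqref{e:PDcoord1} as an instance of the preconditioned randomized block-coordinate forward-backward iteration \eqref{e:FBPrecond} applied, on $\KKK=\HHH\oplus\GGG$, to the maximally monotone operator $\boldsymbol{\mathsf{Q}}$ and the cocoercive operator $\boldsymbol{\mathsf{R}}$ of Proposition \ref{p:probmainbis}, with the block-diagonal metrics $\boldsymbol{\mathsf W}=\cart_{j=1}^p\mathsf{W}_j$ and $\boldsymbol{\mathsf U}=\cart_{k=1}^q\mathsf{U}_k$ combined into the preconditioner $\boldsymbol{\mathsf V}$ given by \eqref{e:defV1}. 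Once this identification is in place, Proposition \ref{p:FBPreconf} will yield $\as$ weak convergence of $(\boldsymbol{z}_n)=((\boldsymbol{x}_n,\boldsymbol{v}_n))$ to a $\boldsymbol{\mathsf Z}$-valued random variable, and Proposition \ref{p:probmainbis}\ref{p:probmainbisiii} will split this into the desired convergence of the primal and dual iterates to $\boldsymbol{\mathsf{F}}$- and $\boldsymbol{\mathsf{F}}^*$-valued random variables.

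First, I would check the deterministic hypotheses of Proposition \ref{p:FBPreconf} with the constant step-size choice $\gamma_n\equiv 1$. Parts \ref{p:probmainbisi}-\ref{p:probmainbisii} of Proposition \ref{p:probmainbis} dispose of the maximal monotonicity of $\boldsymbol{\mathsf Q}$, the cocoercivity of $\boldsymbol{\mathsf R}$, and the nonemptiness of $\boldsymbol{\mathsf Z}$. The block-diagonal structure of $\boldsymbol{\mathsf W}$ and $\boldsymbol{\mathsf U}$ transfers the scalar cocoercivities $\mu_j$ and $\nu_k$ into cocoercivities of $\boldsymbol{\mathsf W}^{1/2}\boldsymbol{\mathsf C}\boldsymbol{\mathsf W}^{1/2}$ with constant $\mu=\min_j \mu_j$ and of $\boldsymbol{\mathsf U}^{1/2}\boldsymbol{\mathsf D}^{-1}\boldsymbol{\mathsf U}^{1/2}$ with constant $\nu=\min_k \nu_k$. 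Since \eqref{e:condvarthetaalpha} combined with \eqref{e:varthetalpha} forces $\|\boldsymbol{\mathsf U}^{1/2}\boldsymbol{\mathsf L}\boldsymbol{\mathsf W}^{1/2}\|<1$, Lemma \ref{le:1} provides that $\boldsymbol{\mathsf V}$ is strongly positive self-adjoint and that $\boldsymbol{\mathsf V}^{1/2}\boldsymbol{\mathsf R}\boldsymbol{\mathsf V}^{1/2}$ is $\vartheta_\alpha$-cocoercive; the requirement $\sup_n\gamma_n<2\vartheta_\alpha$ then reduces exactly to \eqref{e:condvarthetaalpha}.

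Next I would translate \eqref{e:PDcoord1} into \eqref{e:FBPrecond} by invoking Lemma \ref{lem:resespprod1} with $(\boldsymbol{\mathsf c},\boldsymbol{\mathsf e})=(\boldsymbol{c}_n,\boldsymbol{d}_n)$: the pair $(\boldsymbol{y}_n,\boldsymbol{u}_n)$ produced by \eqref{e:PDcoord1} before $\boldsymbol{\varepsilon}_n$-masking equals $\boldsymbol{\mathsf J}_{\boldsymbol{\mathsf V}\boldsymbol{\mathsf Q}}(\boldsymbol{z}_n-\boldsymbol{\mathsf V}\boldsymbol{\mathsf R}\boldsymbol{z}_n+\boldsymbol{s}_n)$ where $\boldsymbol{s}_n$ is the explicit linear image of $(\boldsymbol{c}_n,\boldsymbol{d}_n)$ provided by Lemma \ref{lem:resespprod1}, while $(\boldsymbol{a}_n,\boldsymbol{b}_n)$ plays the role of $\boldsymbol{t}_n$. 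Because $(\boldsymbol{\mathsf W}^{-1}-\boldsymbol{\mathsf L}^*\boldsymbol{\mathsf U}\boldsymbol{\mathsf L})^{-1}$, $(\boldsymbol{\mathsf U}^{-1}-\boldsymbol{\mathsf L}\boldsymbol{\mathsf W}\boldsymbol{\mathsf L}^*)^{-1}$, $\boldsymbol{\mathsf L}$, $\boldsymbol{\mathsf W}$, and $\boldsymbol{\mathsf U}$ are bounded, the summability condition \ref{c:PDcoord1i} on $(\boldsymbol{a}_n,\boldsymbol{b}_n,\boldsymbol{c}_n,\boldsymbol{d}_n)$ transfers to the summability hypothesis \ref{p:nyc2014-04-03ii} of Proposition \ref{p:FBPreconf} on $(\boldsymbol{s}_n,\boldsymbol{t}_n)$. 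For the probabilistic hypothesis \ref{p:nyc2014-04-03iv}, the independence of $\EEE_n$ and $\boldsymbol{\XX}_n=\boldsymbol{\ZZZ}_n$ is exactly \ref{c:PDcoord1ii}, the positivity $\PP[\varepsilon_{p+k,0}=1]>0$ is given for dual indices, and for each primal index $j$ the nonemptiness of $\mathbb{L}_j^*$ (see \eqref{e:defLjs}) combined with \ref{c:PDcoord1iii} forces $\PP[\varepsilon_{j,0}=1]\ge\PP[\varepsilon_{p+k,0}=1]>0$ for any $k\in\mathbb{L}_j^*$.

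I expect the delicate point to be the block-coordinate bookkeeping, since \eqref{e:PDcoord1} sets $y_{j,n}=0$ when $\varepsilon_{j,n}=0$ yet still reuses $y_{j,n}$ inside the dual update through $\mathsf{L}_{k,j}(2y_{j,n}-x_{j,n})$, whereas the abstract iteration \eqref{e:FBPrecond} needs the full resolvent image for consistency. This is exactly the role of assumption \ref{c:PDcoord1iii}: whenever a dual block $k$ is activated, every primal index $j\in\mathbb{L}_k$ is simultaneously activated, so the $y_{j,n}$ that enter a dual update are always genuine resolvent values, while for $j\notin\mathbb{L}_k$ the coefficient $\mathsf{L}_{k,j}$ vanishes and the spurious zero value of $y_{j,n}$ is harmless. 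Having thus shown that \eqref{e:PDcoord1} realizes \eqref{e:FBPrecond} verbatim, the conclusion of Proposition \ref{p:FBPreconf} and the equivalence in Proposition \ref{p:probmainbis}\ref{p:probmainbisiii} close the argument.
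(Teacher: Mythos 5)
Your overall strategy is exactly the paper's: rewrite \eqref{e:PDcoord1} as an instance of \eqref{e:FBPrecond} with $\gamma_n\equiv 1$ and the preconditioner \eqref{e:defV1}, use Lemma \ref{le:1} for the strong positivity of $\boldsymbol{\mathsf V}$ and the $\vartheta_\alpha$-cocoercivity of $\boldsymbol{\mathsf V}^{1/2}\boldsymbol{\mathsf R}\boldsymbol{\mathsf V}^{1/2}$, use Lemma \ref{lem:resespprod1} for the componentwise form of $\boldsymbol{\mathsf J}_{\boldsymbol{\mathsf V}\boldsymbol{\mathsf Q}}$, and close with Proposition \ref{p:FBPreconf} and Proposition \ref{p:probmainbis}\ref{p:probmainbisiii}. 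Your treatment of the activation bookkeeping via Condition \ref{c:PDcoord1iii} and of the positivity $\PP[\varepsilon_{j,0}=1]>0$ through $\mathbb{L}_j^*\neq\emp$ also matches the paper.

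There is, however, one concrete error in your identification of the error terms. You invoke Lemma \ref{lem:resespprod1} with $(\boldsymbol{\mathsf c},\boldsymbol{\mathsf e})=(\boldsymbol{c}_n,\boldsymbol{d}_n)$ and relegate $(\boldsymbol{a}_n,\boldsymbol{b}_n)$ entirely to the additive error $\boldsymbol{t}_n$. This does not reproduce \eqref{e:PDcoord1}: in the lemma, the dual component $\boldsymbol{\mathsf u}$ is evaluated at $\boldsymbol{\mathsf L}(2\boldsymbol{\mathsf y}-\boldsymbol{\mathsf x})$ where $\boldsymbol{\mathsf y}$ is the \emph{exact} first component of the joint resolvent, whereas in \eqref{e:PDcoord1} the dual resolvent receives $\mathsf{L}_{k,j}(2y_{j,n}-x_{j,n})$ with $y_{j,n}$ already contaminated by the additive primal error $a_{j,n}$. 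The discrepancy $2\boldsymbol{\mathsf L}\boldsymbol{a}_n$ is not an additive error on the output of $\boldsymbol{\mathsf J}_{\boldsymbol{\mathsf V}\boldsymbol{\mathsf Q}}$ but a perturbation of its argument, and must therefore be absorbed into $\boldsymbol{\mathsf e}$; the correct choice is $\boldsymbol{\mathsf e}=\boldsymbol{e}_n=2\boldsymbol{\mathsf L}\boldsymbol{a}_n+\boldsymbol{d}_n$, which is what the paper uses in \eqref{e:deftn1}--\eqref{e:defen1}. The slip is harmless for the final conclusion, since $\boldsymbol{\mathsf L}$ is bounded and $\sum_n\sqrt{\EC{\|\boldsymbol{a}_n\|^2}{\boldsymbol{\XX}_n}}<\pinf$ is already part of Condition \ref{c:PDcoord1i}, so $\boldsymbol{s}_n$ remains summable after the correction; but as written your claimed identity between \eqref{e:PDcoord1} and \eqref{e:FBPrecond} is false, and the verification of hypothesis \ref{p:nyc2014-04-03ii} of Proposition \ref{p:FBPreconf} should be carried out with the corrected $\boldsymbol{s}_n$.
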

  \begin{proof}
	In view of \ref{c:PDcoord1iii},  for every $j \in \{1,\ldots,p\}$, $ \max \big\{ \varepsilon_{j,n} , ( \varepsilon_{p+k,n})_{k\in \mathbb{L}_j^*} \big\} = \varepsilon_{j,n}$.
	Moreover, for every  $k\in \{1,\ldots,q\}$, $j\in \mathbb{L}_k \Leftrightarrow k \in \mathbb{L}_j^*$.
Iterations \eqref{e:PDcoord1} are thus equivalent to 
  \begin{equation}\label{e:PDcoord1prim}
\begin{array}{l}
\text{for}\;n=0,1,\ldots\\
\left\lfloor
\begin{array}{l}
\text{for}\;j=1,\ldots,p\\
\left\lfloor
\begin{array}{l}
\eta_{j,n} = \max \big\{ \varepsilon_{j,n} , ( \varepsilon_{p+k,n})_{k\in \mathbb{L}_j^*} \big\}\\
\displaystyle y_{j,n} =
\eta_{j,n}\Big(\mathsf{J}_{\mathsf{W}_j\mathsf{A}_j}\big(x_{j,n}-\mathsf{W}_j(\sum_{k=1}^q {\mathsf{L}^*_{k,j} v_{k,n}}
+ \mathsf{C}_j x_{j,n} +c_{j,n})\big)+a_{j,n}\Big)\\
x_{j,n+1} = x_{j,n}+\lambda_n \varepsilon_{j,n}  (y_{j,n}- x_{j,n})
\end{array}
\right.\\
\text{for}\;k=1,\ldots,q\\
\left\lfloor
\begin{array}{l}
\displaystyle u_{k,n} = \varepsilon_{p+k,n}\Big(\mathsf{J}_{\mathsf{U}_k\mathsf{B}_k^{-1}}\big(v_{k,n}+\mathsf{U}_k(\sum_{j \in \mathbb{L}_k}\mathsf{L}_{k,j} (2y_{j,n}-x_{j,n})- \mathsf{D}_k^{-1}v_{k,n}+d_{k,n})\big)+b_{k,n}\Big)\\
v_{k,n+1} = v_{k,n}+\lambda_n \varepsilon_{p+k,n} (u_{k,n}-v_{k,n}).
\end{array}
\right.
\end{array}
\right.\\
\end{array}
\end{equation}
On the other hand, according to Proposition \ref{p:probmainbis}\ref{p:probmainbisi}-\ref{p:probmainbisii}, 
$\boldsymbol{\mathsf{Q}}$ is maximally monotone, $\boldsymbol{\mathsf R}$ is cocoercive,
and $\boldsymbol{\mathsf{Z}}=\zer(\boldsymbol{\mathsf{Q}}+\boldsymbol{\mathsf R})\neq \emp$.
It can be noticed that \eqref{e:varthetalpha} and \eqref{e:condvarthetaalpha} imply that
$\|\boldsymbol{\mathsf U}^{1/2}\boldsymbol{\mathsf{L}}\boldsymbol{\mathsf W}^{1/2}\| <1$.
Thus, by virtue of Lemma~\ref{lem:resespprod1}, Algorithm \eqref{e:PDcoord1prim}
can be rewritten under the form of Algorithm \eqref{e:FBPrecond}, where $m=p+q$, $\boldsymbol{\mathsf{V}}$ is defined by \eqref{e:defV1}
and,  for every $n\in \NN$,
\begin{align}
& \boldsymbol{z}_n = (\boldsymbol{x}_n,\boldsymbol{v}_n), \label{e:equivFB1}\\
&\gamma_n = 1,\\
& \boldsymbol{\mathsf{J}}_{\boldsymbol{\mathsf{V}}\boldsymbol{\mathsf{Q}}}
\colon\boldsymbol{\mathsf{z}}\mapsto
(\mathsf{T}_{i,n}\boldsymbol{\mathsf{z}})_{1\leq i\leq m},\\
&(\forall j\in\{1,\ldots,p\})\qquad \mathsf{T}_{j,n}\colon \KKK \to \HH_j,\\
&(\forall k\in\{1,\ldots,q\})\qquad \mathsf{T}_{p+k,n}\colon \KKK \to \GG_k, \label{e:equivFB2}\\
& \boldsymbol{t}_n = (\boldsymbol{a}_n,\boldsymbol{b}_n),\label{e:deftn1}\\
&\boldsymbol{s}_n = \big((\boldsymbol{\mathsf{W}}^{-1}-\boldsymbol{\mathsf{L}}^*\boldsymbol{\mathsf{U}}\boldsymbol{\mathsf{L}})^{-1}
(\boldsymbol{\mathsf{L}}^*\boldsymbol{\mathsf{U}}\boldsymbol{e}_n-\boldsymbol{c}_n),
(\boldsymbol{\mathsf{U}}^{-1}-\boldsymbol{\mathsf{L}}\boldsymbol{\mathsf{W}}\boldsymbol{\mathsf{L}}^*)^{-1}
(\boldsymbol{e}_n-\boldsymbol{\mathsf{L}}\boldsymbol{\mathsf{W}}\boldsymbol{c}_n)\big),\\
& \boldsymbol{e}_n = 2 \boldsymbol{\mathsf{L}} \boldsymbol{a}_n + \boldsymbol{d}_n \label{e:defen1}.
\end{align}
Since $\boldsymbol{\mathsf W}$ and $\boldsymbol{\mathsf U}$ are
two strongly positive self-adjoint operators such that $\|\boldsymbol{\mathsf U}^{1/2}\boldsymbol{\mathsf{L}}\boldsymbol{\mathsf W}^{1/2}\| <1$,
Lemma~\ref{le:1}\ref{le:1i} allows us to claim that $\boldsymbol{\mathsf{V}}$ is a strongly positive self-adjoint operator in $\BL(\KKK)$.
In addition, for every $(\boldsymbol{\mathsf{x}},\boldsymbol{\mathsf{x}}')\in \HHH^2$,
\begin{align}
\scal{\boldsymbol{\mathsf{x}}-\boldsymbol{\mathsf{x}}'}
{\boldsymbol{\mathsf W}^{1/2} \boldsymbol{\mathsf{C}} \boldsymbol{\mathsf W}^{1/2}\boldsymbol{\mathsf{x}}-
\boldsymbol{\mathsf W}^{1/2} \boldsymbol{\mathsf{C}} \boldsymbol{\mathsf W}^{1/2}\boldsymbol{\mathsf{x}}'}
= &\sum_{j=1}^p \scal{\mathsf{x}_j-\mathsf{x}'_j}
{\mathsf{W}^{1/2}_j \mathsf{C}_j \mathsf{W}^{1/2}_j \mathsf{x}_j-
{\mathsf{W}^{1/2}_j \mathsf{C}_j \mathsf{W}^{1/2}_j \mathsf{x}_j'}}\nonumber\\
\ge & \sum_{j=1}^p \mu_j \|\mathsf{W}^{1/2}_j \mathsf{C}_j \mathsf{W}^{1/2}_j \mathsf{x}_j-
{\mathsf{W}^{1/2}_j \mathsf{C}_j \mathsf{W}^{1/2}_j \mathsf{x}_j'}\|^2\nonumber\\
\ge & \mu \|\boldsymbol{\mathsf W}^{1/2} \boldsymbol{\mathsf{C}} \boldsymbol{\mathsf W}^{1/2}\boldsymbol{\mathsf{x}}-
\boldsymbol{\mathsf W}^{1/2} \boldsymbol{\mathsf{C}} \boldsymbol{\mathsf W}^{1/2}\boldsymbol{\mathsf{x}}'\|^2.
\end{align}
Thus $\boldsymbol{\mathsf W}^{1/2} \boldsymbol{\mathsf{C}} \boldsymbol{\mathsf W}^{1/2}$ is $\mu$-cocoercive, and similarly,
$\boldsymbol{\mathsf U}^{1/2} \boldsymbol{\mathsf{D}}^{-1} \boldsymbol{\mathsf U}^{1/2}$ is $\nu$-cocoercive.
It follows from Lemma~\ref{le:1}\ref{le:1ii} that $\boldsymbol{\mathsf V}^{1/2} \boldsymbol{\mathsf R} \boldsymbol{\mathsf V}^{1/2}$
is $\vartheta_\alpha$-cocoercive, and our assumptions guarantee that $1 = \sup_{n\in\NN}\gamma_n<2\vartheta_\alpha$.
Moreover, it can be deduced from Condition \ref{c:PDcoord1i} and \eqref{e:deftn1}-\eqref{e:defen1} that
\begin{align}
& \sum_{n\in\NN}\sqrt{\EC{\|\boldsymbol{t}_n\|^2}{\boldsymbol{\XX}_n}} \le 
\sum_{n\in\NN}\sqrt{\EC{\|\boldsymbol{a}_n\|^2}{\boldsymbol{\XX}_n}}+\sum_{n\in\NN}\sqrt{\EC{\|\boldsymbol{b}_n\|^2}{\boldsymbol{\XX}_n}} < \pinf,\\
& \sum_{n\in\NN}\sqrt{\EC{\|\boldsymbol{s}_n\|^2}{\boldsymbol{\XX}_n}}\nonumber\\
&\;\;\le
\|(\boldsymbol{\mathsf{W}}^{-1}-\boldsymbol{\mathsf{L}}^*\boldsymbol{\mathsf{U}}\boldsymbol{\mathsf{L}})^{-1}\|
\Big(\sum_{n\in\NN}\sqrt{\EC{\|\boldsymbol{c}_n\|^2}{\boldsymbol{\XX}_n}}+2\|\boldsymbol{\mathsf{L}}^*\boldsymbol{\mathsf{U}}\boldsymbol{\mathsf{L}}\| 
\sum_{n\in\NN}\sqrt{\EC{\|\boldsymbol{a}_n\|^2}{\boldsymbol{\XX}_n}}\nonumber\\
&\;\;+ \|\boldsymbol{\mathsf{L}}^*\boldsymbol{\mathsf{U}}\|
\sum_{n\in\NN}\sqrt{\EC{\|\boldsymbol{d}_n\|^2}{\boldsymbol{\XX}_n}}\Big)
+\|(\boldsymbol{\mathsf{U}}^{-1}-\boldsymbol{\mathsf{L}}\boldsymbol{\mathsf{W}}\boldsymbol{\mathsf{L}}^*)^{-1}\|
\Big(2 \|\boldsymbol{\mathsf{L}}\| \sum_{n\in\NN}\sqrt{\EC{\|\boldsymbol{a}_n\|^2}{\boldsymbol{\XX}_n}}\nonumber\\
&\;\;+\sum_{n\in\NN}\sqrt{\EC{\|\boldsymbol{d}_n\|^2}{\boldsymbol{\XX}_n}}+\|\boldsymbol{\mathsf{L}}\boldsymbol{\mathsf{W}}\|
\sum_{n\in\NN}\sqrt{\EC{\|\boldsymbol{c}_n\|^2}{\boldsymbol{\XX}_n}}\Big) < \pinf.
\end{align}
In addition, since we have assumed that, for every $j \in \{1,\ldots,p\}$, $\mathbb{L}_j^* \neq \emp$,
\ref{c:PDcoord1ii} and \ref{c:PDcoord1iii} guarantee that Condition \ref{p:nyc2014-04-03iv} in Proposition \ref{p:FBPreconf}
is also fulfilled.
All the assumptions of Proposition \ref{p:FBPreconf} are then satisfied, which allows us to establish the almost sure convergence of $(\boldsymbol{x}_n,\boldsymbol{v}_n)_{n\in \NN}$
to a $\boldsymbol{\mathsf{Z}}$-valued random variable. Finally, Proposition \ref{p:probmainbis}\ref{p:probmainbisiii} ensures that the limit is an 
$\boldsymbol{\mathsf{F}}\times \boldsymbol{\mathsf{F}}^*$-valued random variable.
\end{proof}

\noindent A number of observations can be made on Proposition \ref{p:algopdmon1}.
\begin{remark}\ \label{re:mon1}
\begin{enumerate}
\item The Boolean random variables $(\varepsilon_{i,n})_{1\le i \le p+q}$ signal the variables $(x_{j,n})_{1\le j \le p}$ and $(v_{k,n})_{1\le k \le q}$
that are activated at each iteration $n$. From a computational standpoint, when some of them are zero-valued, no update of the associated variables
must be performed. Note that, in accordance with Condition \ref{c:PDcoord1iii},
for every $j\in \{1,\ldots,p\}$, $y_{j,n}$ needs to be computed not only when $x_{j,n}$ is activated,
but also when there exists $k\in \{1,\ldots,q\}$ such that $v_{k,n}$ is activated and $\mathsf{L}_{k,j} \neq 0$. 
\item  For every $n\in \NN$, $j\in \{1,\ldots,p\}$, and $k\in \{1,\ldots,q\}$, $a_{j,n}$, $b_{k,n}$, $c_{j,n}$,
and $d_{k,n}$ model stochastic errors possibly arising at iteration $n$, when applying $\mathsf{J}_{\mathsf{W}_j\mathsf{A}_j}$,
$\mathsf{J}_{\mathsf{U}_k\mathsf{B}_k^{-1}}$, $\mathsf{C}_j$, and $\mathsf{D}_k^{-1}$, respectively.
\item \label{re:mon1iii} Using the triangle and Cauchy-Schwarz inequalities yields
\begin{align}
(\boldsymbol{\mathsf x} \in \HHH)\qquad
\|\boldsymbol{\mathsf U}^{1/2} \boldsymbol{\mathsf{L}}\boldsymbol{\mathsf W}^{1/2}\boldsymbol{\mathsf x} \|^2 
&= \sum_{k=1}^q \Big\| \sum_{j=1}^p \mathsf{U}_k^{1/2}  \mathsf{L}_{k,j} \mathsf{W}_j^{1/2} \mathsf{x}_j\Big\|^2\nonumber\\
&\le \sum_{k=1}^q \Big(\sum_{j=1}^p \|\mathsf{U}_k^{1/2} \mathsf{L}_{k,j} \mathsf{W}_j^{1/2}\| \|\mathsf{x}_j\|\Big)^2\nonumber\\
&\le \sum_{k=1}^q 
\Big(\sum_{j=1}^p \|\mathsf{U}_k^{1/2} \mathsf{L}_{k,j} \mathsf{W}_j^{1/2} \|^2\Big) \Big(\sum_{j=1}^p \|\mathsf{x}_j \|^2\Big),
\end{align} 
which shows that
\begin{equation}\label{e:boundnormULW}
\|\boldsymbol{\mathsf U}^{1/2}\boldsymbol{\mathsf{L}}\boldsymbol{\mathsf W}^{1/2}\| 
\le \Big(\sum_{j=1}^p \sum_{k=1}^q \|\mathsf{U}_k^{1/2} \mathsf{L}_{k,j} \mathsf{W}_j^{1/2} \|^2\Big)^{1/2}.
\end{equation} 
For every $j\in \{1,\ldots,p\}$, let a cocoercivity constant of $\mathsf{C}_j$ be denoted by $\widetilde{\mu}_j\in \RPP$ and, for every $k\in \{1,\ldots,q\}$,
let a strong monotonicity constant of $\mathsf{D}_k$ be denoted by $\widetilde{\nu}_k\in \RPP$. Then, one can choose
\begin{equation}
\mu = \min\{(\|\mathsf{W}_j\|^{-1}\widetilde{\mu}_j)_{1\le j \le p}\},\qquad 
\nu = \min\{(\|\mathsf{U}_k\|^{-1}\widetilde{\nu}_k)_{1\le k \le q}\}.\label{e:mumutj}
\end{equation}
Therefore, by using Remark \ref{re:condFB1}\ref{re:condFB1i}, a sufficient condition for \eqref{e:condvarthetaalpha} to be satisfied with $\alpha=1$ is
\begin{align}
&\left(1-\Big(\sum_{j=1}^p \sum_{k=1}^q \|\mathsf{U}_k^{1/2} \mathsf{L}_{k,j} \mathsf{W}_j^{1/2} \|^2\Big)^{1/2}\right)
\min\{(\|\mathsf{W}_j\|^{-1}\widetilde{\mu}_j)_{1\le j \le p},(\|\mathsf{U}_k\|^{-1}\widetilde{\nu}_k)_{1\le k \le q}\}> \frac12. \label{e:condvarthetaalphabis}
\end{align}
When $\boldsymbol{\mathsf{D}}^{-1}=\boldsymbol{\mathsf{0}}$, in accordance with Remark~\ref{re:condFB1}\ref{re:condFB1ii}, this condition
can be replaced by
\begin{equation}
\left(1-\sum_{j=1}^p \sum_{k=1}^q \|\mathsf{U}_k^{1/2} \mathsf{L}_{k,j} \mathsf{W}_j^{1/2} \|^2\right)
\min\{(\|\mathsf{W}_j\|^{-1}\widetilde{\mu}_j)_{1\le j \le p}\}> \frac12. \label{e:condvarthetaalphater}
\end{equation}
\item The above algorithm extends a number of results existing in a deterministic setting, when $p=1$ and no random sweeping is performed.
In most of these works, $\mathsf{W}_1 = \tau \Id$ and, for every $k\in \{1,\ldots,q\}$, $\mathsf{U}_k = \rho_k \Id$ where $(\tau,\rho_1,\ldots,\rho_q) \in \RPP^{q+1}$.
In particular, in \cite{Vu_B_2013_j-acm_spl_adm}, a sufficient condition for \eqref{e:condvarthetaalphabis} to be satisfied is employed, while in \cite{Condat_L_2013_j-ota-primal-dsm} 
it is assumed that $\boldsymbol{\mathsf{D}}^{-1}=\boldsymbol{\mathsf{0}}$ and a condition similar to \eqref{e:condvarthetaalphater} is used.
The proposed block-coordinate algorithm also extends the results in \cite[Section 6]{Combettes_P_2014_j-optim_Variable_mfb} when a constant metric is considered.
\end{enumerate}
\end{remark}
Due to the symmetry existing between the primal and the dual problems, we can swap the roles of these two problems,
so leading to a symmetric form of Algorithm \eqref{e:PDcoord1}:
\begin{proposition}\label{p:algopdmon1sym}
Let $\boldsymbol{\mathsf{W}}$, $\boldsymbol{\mathsf{U}}$, $\mu$, and $\nu$
be defined as in Proposition \ref{p:algopdmon1}.
Suppose that \eqref{e:condvarthetaalpha} holds
where $\vartheta_\alpha$ is defined by \eqref{e:varthetalpha}.
Let $(\lambda_n)_{n\in\NN}$ be a sequence in $\left]0,1\right]$ such that 
$\inf_{n\in\NN}\lambda_n>0$,
let $\boldsymbol{x}_0$, 
$(\boldsymbol{a}_n)_{n\in\NN}$, and 
$(\boldsymbol{c}_n)_{n\in\NN}$ be $\HHH$-valued random variables,
let $\boldsymbol{v}_0$, 
$(\boldsymbol{b}_n)_{n\in\NN}$, and 
$(\boldsymbol{d}_n)_{n\in\NN}$ be $\GGG$-valued random 
variables, and let $(\boldsymbol{\varepsilon}_n)_{n\in\NN}$ be identically distributed 
$\mathbb{D}_{p+q}$-valued random variables. 
Iterate
\begin{equation}\label{e:PDcoord1sym}
\begin{array}{l}
\text{for}\;n=0,1,\ldots\\
\left\lfloor
\begin{array}{l}
\text{for}\;k=1,\ldots,q\\
\left\lfloor
\begin{array}{l}
\displaystyle u_{k,n} = \varepsilon_{p+k,n}\Big(\mathsf{J}_{\mathsf{U}_k\mathsf{B}_k^{-1}}\big(v_{k,n}+\mathsf{U}_k(\sum_{j\in\mathbb{L}_k}\mathsf{L}_{k,j} x_{j,n}- \mathsf{D}_k^{-1}v_{k,n}+d_{k,n})\big)+b_{k,n}\Big)\\
v_{k,n+1} = v_{k,n}+\lambda_n \varepsilon_{p+k,n} (u_{k,n}-v_{k,n})
\end{array}
\right.\\
\text{for}\;j=1,\ldots,p\\
\left\lfloor
\begin{array}{l}
\displaystyle y_{j,n} =
\varepsilon_{j,n}\Big(\mathsf{J}_{\mathsf{W}_j\mathsf{A}_j}\big(x_{j,n}-\mathsf{W}_j(\sum_{k\in \mathbb{L}_j^*} {\mathsf{L}^*_{k,j} (2u_{k,n}-v_{k,n})}
+ \mathsf{C}_j x_{j,n} +c_{j,n})\big)+a_{j,n}\Big)\\
x_{j,n+1} = x_{j,n}+\lambda_n \varepsilon_{j,n}  (y_{j,n}- x_{j,n}).
\end{array}
\right.\\
\end{array}
\right.
\end{array}
\end{equation}
In addition, assume that Condition~\ref{c:PDcoord1i} 
in Proposition \ref{p:algopdmon1} is satisfied where
$(\forall n\in\NN)$ $\EEE_n=\sigma(\boldsymbol{\varepsilon}_n)$ and $\boldsymbol{\XX}_n=
\sigma(\boldsymbol{x}_{n'},\boldsymbol{v}_{n'})_{0\leq n'\leq n}\,$, and that
the following hold:
\begin{enumerate}
\setcounter{enumi}{1}
\item
\label{c:PDcoord1iisym}
For every $n\in\NN$, $\EEE_n$ and $\XXX_n$ are independent, and 
$(\forall j\in\{1,\ldots,p\})$ $\PP[\varepsilon_{j,0}=1]>0$.
\item \label{c:PDcoord1iiisym} For every $k \in \{1,\ldots,q\}$ and $n\in \NN$,
$\displaystyle\bigcup_{j\in \mathbb{L}_k} \menge{\omega\in \Omega}{\varepsilon_{j,n}(\omega) = 1}
\subset \menge{\omega\in \Omega}{\varepsilon_{p+k,n}(\omega) = 1}$.
\end{enumerate}
Then, $(\boldsymbol{x}_n)_{n\in\NN}$ converges weakly $\as$ to an 
$\boldsymbol{\mathsf{F}}$-valued random variable, and
$(\boldsymbol{v}_n)_{n\in\NN}$ 
converges weakly $\as$ to an
$\boldsymbol{\mathsf{F}}^*$-valued random variable.
\end{proposition}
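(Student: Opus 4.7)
The plan is to mirror the proof of Proposition \ref{p:algopdmon1}, simply swapping the primal and dual blocks in the product-space formulation. On the reordered product space $\widetilde{\KKK}=\GGG\oplus\HHH$, I introduce the maximally monotone operator
\[
\widetilde{\boldsymbol{\mathsf Q}}\colon (\boldsymbol{\mathsf v},\boldsymbol{\mathsf x}) \mapsto (\boldsymbol{\mathsf B}^{-1}\boldsymbol{\mathsf v}-\boldsymbol{\mathsf L}\boldsymbol{\mathsf x}) \times (\boldsymbol{\mathsf A}\boldsymbol{\mathsf x}+\boldsymbol{\mathsf L}^*\boldsymbol{\mathsf v})
\]
and the cocoercive operator $\widetilde{\boldsymbol{\mathsf R}}\colon (\boldsymbol{\mathsf v},\boldsymbol{\mathsf x}) \mapsto (\boldsymbol{\mathsf D}^{-1}\boldsymbol{\mathsf v},\boldsymbol{\mathsf C}\boldsymbol{\mathsf x})$. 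Monotonicity of $\widetilde{\boldsymbol{\mathsf Q}}$ and cocoercivity of $\widetilde{\boldsymbol{\mathsf R}}$ are immediate (the skew cross-terms cancel), and by the argument of Proposition \ref{p:probmainbis} the set $\zer(\widetilde{\boldsymbol{\mathsf Q}}+\widetilde{\boldsymbol{\mathsf R}})$ coincides with $\boldsymbol{\mathsf F}^*\times\boldsymbol{\mathsf F}$ and is therefore nonempty.

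Next, I take as preconditioner the strongly positive self-adjoint inverse of
\[
\widetilde{\boldsymbol{\mathsf V}}^{-1}\colon (\boldsymbol{\mathsf v},\boldsymbol{\mathsf x})\mapsto (\boldsymbol{\mathsf U}^{-1}\boldsymbol{\mathsf v}-\boldsymbol{\mathsf L}\boldsymbol{\mathsf x},\,-\boldsymbol{\mathsf L}^*\boldsymbol{\mathsf v}+\boldsymbol{\mathsf W}^{-1}\boldsymbol{\mathsf x}),
\]
which is the twin of \eqref{e:invV1} with the two blocks reversed. Lemma \ref{le:1}\ref{le:1i} carries over verbatim by interchanging the roles of $\boldsymbol{\mathsf W}$ and $\boldsymbol{\mathsf U}$ and replacing $\boldsymbol{\mathsf L}$ by $\boldsymbol{\mathsf L}^*$, and the resulting cocoercivity constant of $\widetilde{\boldsymbol{\mathsf V}}^{1/2}\widetilde{\boldsymbol{\mathsf R}}\widetilde{\boldsymbol{\mathsf V}}^{1/2}$ is again $\vartheta_\alpha$ of \eqref{e:varthetalpha} (the constants $\mu,\nu$ enter symmetrically through the parameter $\alpha$, which can be rescaled).

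The analogue of Lemma \ref{lem:resespprod1} obtained by the same algebraic manipulation — but doing the dual block first — shows that
\[
\boldsymbol{\mathsf u}=\boldsymbol{\mathsf J}_{\boldsymbol{\mathsf U}\boldsymbol{\mathsf B}^{-1}}\bigl(\boldsymbol{\mathsf v}+\boldsymbol{\mathsf U}(\boldsymbol{\mathsf L}\boldsymbol{\mathsf x}-\boldsymbol{\mathsf D}^{-1}\boldsymbol{\mathsf v}+\boldsymbol{\mathsf e})\bigr),\quad
\boldsymbol{\mathsf y}=\boldsymbol{\mathsf J}_{\boldsymbol{\mathsf W}\boldsymbol{\mathsf A}}\bigl(\boldsymbol{\mathsf x}-\boldsymbol{\mathsf W}(\boldsymbol{\mathsf L}^*(2\boldsymbol{\mathsf u}-\boldsymbol{\mathsf v})+\boldsymbol{\mathsf C}\boldsymbol{\mathsf x}+\boldsymbol{\mathsf c})\bigr)
\]
can be written as $(\boldsymbol{\mathsf u},\boldsymbol{\mathsf y})=\boldsymbol{\mathsf J}_{\widetilde{\boldsymbol{\mathsf V}}\widetilde{\boldsymbol{\mathsf Q}}}\bigl((\boldsymbol{\mathsf v},\boldsymbol{\mathsf x})-\widetilde{\boldsymbol{\mathsf V}}\widetilde{\boldsymbol{\mathsf R}}(\boldsymbol{\mathsf v},\boldsymbol{\mathsf x})+\widetilde{\boldsymbol{\mathsf s}}\bigr)$ for an explicit $\widetilde{\boldsymbol{\mathsf s}}$ linear in $(\boldsymbol{\mathsf c},\boldsymbol{\mathsf e})$. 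Consequently, iteration \eqref{e:PDcoord1sym} fits the abstract template \eqref{e:FBPrecond} on $\widetilde{\KKK}$, with $m=p+q$, $\gamma_n=1$, and error sequences $\widetilde{\boldsymbol s}_n,\widetilde{\boldsymbol t}_n$ whose summability in conditional norm follows, by the triangle inequality and the boundedness of $\widetilde{\boldsymbol{\mathsf V}}$, from Condition \ref{c:PDcoord1i} on $(\boldsymbol a_n,\boldsymbol b_n,\boldsymbol c_n,\boldsymbol d_n)$, exactly as at the end of the proof of Proposition \ref{p:algopdmon1}.

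Finally, Condition \ref{c:PDcoord1iiisym} plays the symmetric role of Condition \ref{c:PDcoord1iii}: it ensures that whenever the update of a primal block $y_{j,n}$ is performed, all dual blocks $u_{k,n}$ with $k\in\mathbb{L}_j^*$ entering through the reflected term $2u_{k,n}-v_{k,n}$ have just been computed. This lets the scheme be rewritten, as in \eqref{e:PDcoord1prim}, with activation indicators defined by $\max\{\varepsilon_{p+k,n},(\varepsilon_{j,n})_{j\in\mathbb{L}_k}\}$ for the dual blocks, so that the standing requirement \ref{p:nyc2014-04-03iv} of Proposition \ref{p:FBPreconf} is met under the assumption $\PP[\varepsilon_{j,0}=1]>0$ for all $j\in\{1,\ldots,p\}$ (combined with $\mathbb{L}_k\neq\emp$). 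Proposition \ref{p:FBPreconf} then yields weak $\as$ convergence of $(\boldsymbol v_n,\boldsymbol x_n)_{n\in\NN}$ to a $\zer(\widetilde{\boldsymbol{\mathsf Q}}+\widetilde{\boldsymbol{\mathsf R}})$-valued random variable, which unpacks to the claimed convergence to $\boldsymbol{\mathsf F}$- and $\boldsymbol{\mathsf F}^*$-valued limits. The only delicate point is the careful tracking of signs and index swaps in the adaptation of Lemmas \ref{le:1} and \ref{lem:resespprod1}; once these are in place, every other estimate is mechanical.
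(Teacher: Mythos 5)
Your proposal is correct and is precisely the argument the paper intends: the paper offers no explicit proof of this proposition, merely invoking "the symmetry existing between the primal and the dual problems" to swap their roles in Algorithm \eqref{e:PDcoord1}, and your write-up carries out exactly that swap (reordered product space, transposed preconditioner, the $\alpha\mapsto\alpha^{-1}$ rescaling in $\vartheta_\alpha$, and Condition \ref{c:PDcoord1iiisym} playing the mirror role of Condition \ref{c:PDcoord1iii}). Nothing further is needed.
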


\subsection{Second algorithm subclass}
We now consider a diagonal form of the operator $\boldsymbol{\mathsf V}$, for which we 
proceed similarly to the approach followed in Section \ref{se:firstalgomon}.
\begin{lemma}\label{le:2}
Let $\boldsymbol{\mathsf W} \in \BL(\HHH)$ and $\boldsymbol{\mathsf U} \in \BL(\GGG)$
be two strongly positive self-adjoint operators such that $\|\boldsymbol{\mathsf U}^{1/2}\boldsymbol{\mathsf{L}}\boldsymbol{\mathsf W}^{1/2}\| <1$.
\begin{enumerate}
\item \label{le:2i} The operator defined by
\begin{align} \label{e:defV2}
\boldsymbol{\mathsf V}\colon\qquad\;\;\KKK&\to\KKK\nonumber\\
(\boldsymbol{\mathsf{x}},\boldsymbol{\mathsf{v}})&\mapsto 
\big(\boldsymbol{\mathsf W} \boldsymbol{\mathsf{x}},
(\boldsymbol{\mathsf U}^{-1}-\boldsymbol{\mathsf{L}}\boldsymbol{\mathsf W}\boldsymbol{\mathsf{L}}^*)^{-1}\boldsymbol{\mathsf{v}}\big)
\end{align}
is a strongly positive self-adjoint operator in $\BL(\KKK)$. 
\item \label{le:2ii} 
 Let $\boldsymbol{\mathsf{C}}\colon \HHH \to \HHH$, $\boldsymbol{\mathsf{D}}\colon \GGG \to 2^\GGG$, and
$\boldsymbol{\mathsf{R}}\colon \KKK \to \KKK$ be the operators defined in Proposition \ref{p:probmainbis}.
If $\boldsymbol{\mathsf W}^{1/2} \boldsymbol{\mathsf{C}} \boldsymbol{\mathsf W}^{1/2}$ is $\mu$-cocoercive with $\mu \in \RPP$ and
$\boldsymbol{\mathsf U}^{1/2} \boldsymbol{\mathsf{D}}^{-1} \boldsymbol{\mathsf U}^{1/2}$ is $\nu$-cocoercive with $\nu \in \RPP$, then
$\boldsymbol{\mathsf V}^{1/2} \boldsymbol{\mathsf{R}} \boldsymbol{\mathsf V}^{1/2}$ is $\vartheta$-cocoercive,
where
\begin{equation}\label{e:vartheta2}
\vartheta = \min\big\{\mu,\nu(1-\|\boldsymbol{\mathsf U}^{1/2}\boldsymbol{\mathsf{L}}\boldsymbol{\mathsf W}^{1/2}\|^2)\big\}.
\end{equation}
\end{enumerate} 
\end{lemma}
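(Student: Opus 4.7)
The plan is to mirror closely the strategy used in Lemma~\ref{le:1}, but exploiting the much simpler block-diagonal structure of $\boldsymbol{\mathsf V}$ in \eqref{e:defV2}.

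For part~\ref{le:2i}, I would argue that $\boldsymbol{\mathsf V}$ is linear, bounded, and self-adjoint block by block: $\boldsymbol{\mathsf W}$ is strongly positive self-adjoint on $\HHH$ by assumption, and for the second block it suffices to check that $\boldsymbol{\mathsf U}^{-1}-\boldsymbol{\mathsf L}\boldsymbol{\mathsf W}\boldsymbol{\mathsf L}^*$ is strongly positive self-adjoint on $\GGG$, so that its inverse exists and inherits these properties. But this is exactly the content of inequality \eqref{e:strongposUinv1} established inside the proof of Lemma~\ref{le:1}\ref{le:1i} under the hypothesis $\|\boldsymbol{\mathsf U}^{1/2}\boldsymbol{\mathsf L}\boldsymbol{\mathsf W}^{1/2}\|<1$; no new calculation is needed. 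Strong positivity of $\boldsymbol{\mathsf V}$ itself then follows by summing the two component lower bounds.

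For part~\ref{le:2ii}, I proceed as in Lemma~\ref{le:1}\ref{le:1ii}: showing $\vartheta$-cocoercivity of $\boldsymbol{\mathsf V}^{1/2}\boldsymbol{\mathsf R}\boldsymbol{\mathsf V}^{1/2}$ is equivalent to the inequality
\begin{equation*}
\big(\forall (\boldsymbol{\mathsf z},\boldsymbol{\mathsf z}')\in\KKK^2\big)\quad
\scal{\boldsymbol{\mathsf z}-\boldsymbol{\mathsf z}'}{\boldsymbol{\mathsf R}\boldsymbol{\mathsf z}-\boldsymbol{\mathsf R}\boldsymbol{\mathsf z}'}\ge \vartheta\,\|\boldsymbol{\mathsf R}\boldsymbol{\mathsf z}-\boldsymbol{\mathsf R}\boldsymbol{\mathsf z}'\|_{\boldsymbol{\mathsf V}}^2.
\end{equation*}
Because $\boldsymbol{\mathsf V}$ is now block-diagonal, the cross term that plagued \eqref{e:cocomaj1} disappears, and with $\boldsymbol{\mathsf z}=(\boldsymbol{\mathsf x},\boldsymbol{\mathsf v})$, $\boldsymbol{\mathsf z}'=(\boldsymbol{\mathsf x}',\boldsymbol{\mathsf v}')$ one obtains directly
\begin{equation*}
\|\boldsymbol{\mathsf R}\boldsymbol{\mathsf z}-\boldsymbol{\mathsf R}\boldsymbol{\mathsf z}'\|_{\boldsymbol{\mathsf V}}^2 = \|\boldsymbol{\mathsf C}\boldsymbol{\mathsf x}-\boldsymbol{\mathsf C}\boldsymbol{\mathsf x}'\|_{\boldsymbol{\mathsf W}}^2 + \scal{\boldsymbol{\mathsf D}^{-1}\boldsymbol{\mathsf v}-\boldsymbol{\mathsf D}^{-1}\boldsymbol{\mathsf v}'}{(\boldsymbol{\mathsf U}^{-1}-\boldsymbol{\mathsf L}\boldsymbol{\mathsf W}\boldsymbol{\mathsf L}^*)^{-1}(\boldsymbol{\mathsf D}^{-1}\boldsymbol{\mathsf v}-\boldsymbol{\mathsf D}^{-1}\boldsymbol{\mathsf v}')}.
\end{equation*}

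The first summand is handled by the cocoercivity of $\boldsymbol{\mathsf W}^{1/2}\boldsymbol{\mathsf C}\boldsymbol{\mathsf W}^{1/2}$, exactly as in the proof of Lemma~\ref{le:1}\ref{le:1ii}, giving a bound of $\mu^{-1}\scal{\boldsymbol{\mathsf x}-\boldsymbol{\mathsf x}'}{\boldsymbol{\mathsf C}\boldsymbol{\mathsf x}-\boldsymbol{\mathsf C}\boldsymbol{\mathsf x}'}$. The second summand is bounded by conjugating with $\boldsymbol{\mathsf U}^{1/2}$ and invoking $\|(\ID-\boldsymbol{\mathsf U}^{1/2}\boldsymbol{\mathsf L}\boldsymbol{\mathsf W}\boldsymbol{\mathsf L}^*\boldsymbol{\mathsf U}^{1/2})^{-1}\| = (1-\|\boldsymbol{\mathsf U}^{1/2}\boldsymbol{\mathsf L}\boldsymbol{\mathsf W}^{1/2}\|^2)^{-1}$, precisely as in \eqref{e:cocomaj3}, then using the cocoercivity of $\boldsymbol{\mathsf U}^{1/2}\boldsymbol{\mathsf D}^{-1}\boldsymbol{\mathsf U}^{1/2}$ to obtain a bound of $\nu^{-1}(1-\|\boldsymbol{\mathsf U}^{1/2}\boldsymbol{\mathsf L}\boldsymbol{\mathsf W}^{1/2}\|^2)^{-1}\scal{\boldsymbol{\mathsf v}-\boldsymbol{\mathsf v}'}{\boldsymbol{\mathsf D}^{-1}\boldsymbol{\mathsf v}-\boldsymbol{\mathsf D}^{-1}\boldsymbol{\mathsf v}'}$.

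Summing the two estimates and using that $\scal{\boldsymbol{\mathsf z}-\boldsymbol{\mathsf z}'}{\boldsymbol{\mathsf R}\boldsymbol{\mathsf z}-\boldsymbol{\mathsf R}\boldsymbol{\mathsf z}'}$ equals the sum of the corresponding inner products over the two blocks, the announced value $\vartheta=\min\{\mu,\nu(1-\|\boldsymbol{\mathsf U}^{1/2}\boldsymbol{\mathsf L}\boldsymbol{\mathsf W}^{1/2}\|^2)\}$ is obtained by taking the larger of the two multiplicative constants $\mu^{-1}$ and $\nu^{-1}(1-\|\boldsymbol{\mathsf U}^{1/2}\boldsymbol{\mathsf L}\boldsymbol{\mathsf W}^{1/2}\|^2)^{-1}$. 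There is no real obstacle here: unlike in Lemma~\ref{le:1}, the diagonal form of $\boldsymbol{\mathsf V}$ eliminates any need for a cross-term estimate and for the auxiliary parameter $\alpha$. The only delicate point is to justify carefully that the operator $\boldsymbol{\mathsf U}^{-1}-\boldsymbol{\mathsf L}\boldsymbol{\mathsf W}\boldsymbol{\mathsf L}^*$ appearing in \eqref{e:defV2} is invertible, which is already packaged in \eqref{e:strongposUinv1}.
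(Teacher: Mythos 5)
Your proposal is correct and follows essentially the same route as the paper's own proof: part~\ref{le:2i} is handled blockwise via the strong positivity of $\boldsymbol{\mathsf U}^{-1}-\boldsymbol{\mathsf{L}}\boldsymbol{\mathsf W}\boldsymbol{\mathsf{L}}^*$ already established in \eqref{e:strongposUinv1}, and part~\ref{le:2ii} reduces cocoercivity to the metric inequality, splits $\|\boldsymbol{\mathsf R}\boldsymbol{\mathsf{z}}-\boldsymbol{\mathsf R}\boldsymbol{\mathsf{z}}'\|_{\boldsymbol{\mathsf V}}^2$ into the two diagonal blocks with no cross term, and bounds them exactly as in \eqref{e:cocomaj2}--\eqref{e:cocomaj3} before taking the maximum of the two constants. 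No gaps; this matches the published argument.
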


\begin{proof}
\ref{le:2i} First note that, as we have already shown in \eqref{e:strongposUinv1}, if $\|\boldsymbol{\mathsf U}^{1/2}\boldsymbol{\mathsf{L}}\boldsymbol{\mathsf W}^{1/2}\| <1$,
then $\boldsymbol{\mathsf U}^{-1}-\boldsymbol{\mathsf{L}}\boldsymbol{\mathsf W}\boldsymbol{\mathsf{L}}^*$ is a strongly positive operator in $\BL(\GGG)$ and it is thus 
an isomorphism. We have then, for every $(\boldsymbol{\mathsf{x}},\boldsymbol{\mathsf{v}}) \in \KKK$,
\begin{align}
\scal{(\boldsymbol{\mathsf{x}},\boldsymbol{\mathsf{v}})}{\boldsymbol{\mathsf V}(\boldsymbol{\mathsf{x}},\boldsymbol{\mathsf{v}})}
&=\scal{\boldsymbol{\mathsf{x}}}{\boldsymbol{\mathsf W}\boldsymbol{\mathsf{x}}}
+\scal{\boldsymbol{\mathsf{v}}}{(\boldsymbol{\mathsf U}^{-1}-\boldsymbol{\mathsf{L}}\boldsymbol{\mathsf W}\boldsymbol{\mathsf{L}}^*)^{-1}\boldsymbol{\mathsf{v}}}\nonumber\\
&\ge \|\boldsymbol{\mathsf W}^{-1}\|^{-1}\|\boldsymbol{\mathsf{x}}\|^2+\|\boldsymbol{\mathsf{U}}^{-1}-\boldsymbol{\mathsf{L}}\boldsymbol{\mathsf W}\boldsymbol{\mathsf{L}}^*\|^{-1} \|\boldsymbol{\mathsf{v}}\|^2\nonumber\\
&\ge \min\big\{\|\boldsymbol{\mathsf W}^{-1}\|^{-1},\|\boldsymbol{\mathsf{U}}^{-1}-\boldsymbol{\mathsf{L}}\boldsymbol{\mathsf W}\boldsymbol{\mathsf{L}}^*\|^{-1} \big\}(\|\boldsymbol{\mathsf{x}}\|^2+\|\boldsymbol{\mathsf{v}}\|^2).
\end{align}
Hence, $\boldsymbol{\mathsf V}$ is a strongly positive self-adjoint operator.

\noindent\ref{le:2ii} Let $\boldsymbol{\mathsf{z}} = (\boldsymbol{\mathsf{x}},\boldsymbol{\mathsf{v}})\in \KKK$ and $\boldsymbol{\mathsf{z}}' = (\boldsymbol{\mathsf{x}}',\boldsymbol{\mathsf{v}}')\in \KKK$.
We have
\begin{align}
\|\boldsymbol{\mathsf R} \boldsymbol{\mathsf{z}}&-\boldsymbol{\mathsf R}\boldsymbol{\mathsf{z}}'\|_{\boldsymbol{\mathsf V}}^2\nonumber\\
= \; & \scal{\boldsymbol{\mathsf C}\boldsymbol{\mathsf{x}} - \boldsymbol{\mathsf C}\boldsymbol{\mathsf{x}}'}{\boldsymbol{\mathsf W}(\boldsymbol{\mathsf C}\boldsymbol{\mathsf{x}} - \boldsymbol{\mathsf C}\boldsymbol{\mathsf{x}}')}+ \scal{\boldsymbol{\mathsf D}^{-1}\boldsymbol{\mathsf{v}} - \boldsymbol{\mathsf D}^{-1}\boldsymbol{\mathsf{v}}'}{(\boldsymbol{\mathsf U}^{-1}-\boldsymbol{\mathsf{L}}\boldsymbol{\mathsf W}\boldsymbol{\mathsf{L}}^*)^{-1}(\boldsymbol{\mathsf D}^{-1}\boldsymbol{\mathsf{v}} - \boldsymbol{\mathsf D}^{-1}\boldsymbol{\mathsf{v}}')}\nonumber\\
\le \; & \|\boldsymbol{\mathsf C}\boldsymbol{\mathsf{x}} - \boldsymbol{\mathsf C}\boldsymbol{\mathsf{x}}'\|_{\boldsymbol{\mathsf W}}^2
+\|(\ID-\boldsymbol{\mathsf U}^{1/2}\boldsymbol{\mathsf{L}}\boldsymbol{\mathsf W}\boldsymbol{\mathsf{L}}^*\boldsymbol{\mathsf U}^{1/2})^{-1}\| \|\boldsymbol{\mathsf D}^{-1}\boldsymbol{\mathsf{v}} - \boldsymbol{\mathsf D}^{-1}\boldsymbol{\mathsf{v}}'\|_{\boldsymbol{\mathsf U}}^2\nonumber\\
\le \; &  \mu^{-1} \scal{\boldsymbol{\mathsf{x}} - \boldsymbol{\mathsf{x}}'}{\boldsymbol{\mathsf C}\boldsymbol{\mathsf{x}} - \boldsymbol{\mathsf C}\boldsymbol{\mathsf{x}}'}
+\nu^{-1}(1-\|\boldsymbol{\mathsf U}^{1/2}\boldsymbol{\mathsf{L}}\boldsymbol{\mathsf W}^{1/2}\|^2)^{-1} 
\scal{\boldsymbol{\mathsf{v}} - \boldsymbol{\mathsf{v}}'}{\boldsymbol{\mathsf D}^{-1}\boldsymbol{\mathsf{v}} - \boldsymbol{\mathsf D}^{-1}\boldsymbol{\mathsf{v}}'}\nonumber\\
\le \; &  \max\big\{\mu^{-1},\nu^{-1}(1-\|\boldsymbol{\mathsf U}^{1/2}\boldsymbol{\mathsf{L}}\boldsymbol{\mathsf W}^{1/2}\|^2)^{-1}\big\} 
\scal{\boldsymbol{\mathsf{z}} - \boldsymbol{\mathsf{z}}'}{\boldsymbol{\mathsf R}\boldsymbol{\mathsf{z}} - \boldsymbol{\mathsf R}\boldsymbol{\mathsf{z}}'},
\end{align}
which, in view of the remark made at the beginning of the proof of Lemma~\ref{le:1}\ref{le:1ii}, shows that $\boldsymbol{\mathsf V}^{1/2}\boldsymbol{\mathsf R}\boldsymbol{\mathsf V}^{1/2}$
is $\vartheta$-cocoercive.
\end{proof}

\begin{lemma}\label{lem:resespprod2}
Let $\boldsymbol{\mathsf{B}}\colon \GGG \to 2^\GGG$, 
$\boldsymbol{\mathsf{C}}\colon \HHH \to \HHH$, $\boldsymbol{\mathsf{D}}\colon \GGG \to 2^\GGG$, $\boldsymbol{\mathsf{Q}}\colon \KKK \to 2^\KKK$, and $\boldsymbol{\mathsf{R}}\colon \KKK \to \KKK$.
Assume that the operator $\boldsymbol{\mathsf{A}}$ defined in Proposition \ref{p:probmainbis} is zero.
Let $\boldsymbol{\mathsf W} \in \BL(\HHH)$ and $\boldsymbol{\mathsf U} \in \BL(\GGG)$
be two strongly positive self-adjoint operators such that $\|\boldsymbol{\mathsf U}^{1/2}\boldsymbol{\mathsf{L}}\boldsymbol{\mathsf W}^{1/2}\| <1$.
Let $\boldsymbol{\mathsf{V}} \in \BL(\KKK)$ be defined by \eqref{e:defV2}.
For every $\boldsymbol{\mathsf{z}}= (\boldsymbol{\mathsf{x}},\boldsymbol{\mathsf{v}})\in \KKK$ and 
$(\boldsymbol{\mathsf{e}}_1,\boldsymbol{\mathsf{e}}_2)\in \KKK$, let
\begin{equation}\label{e:expresprod2}
\begin{cases}
\boldsymbol{\mathsf{u}} = \boldsymbol{\mathsf{J}}_{\boldsymbol{\mathsf{U}}\boldsymbol{\mathsf{B}}^{-1}}\left(\boldsymbol{\mathsf{v}}+\boldsymbol{\mathsf{U}}\big(\boldsymbol{\mathsf{L}}(
\boldsymbol{\mathsf{x}}-\boldsymbol{\mathsf{W}}(\boldsymbol{\mathsf{C}}\boldsymbol{\mathsf{x}}+\boldsymbol{\mathsf{L}}^*\boldsymbol{\mathsf{v}}))
- \boldsymbol{\mathsf{D}}^{-1}\boldsymbol{\mathsf{v}}+\boldsymbol{\mathsf{e}}_2\big)\right)\\
\boldsymbol{\mathsf{y}} = \boldsymbol{\mathsf{x}}-\boldsymbol{\mathsf{W}}(\boldsymbol{\mathsf{C}}\boldsymbol{\mathsf{x}}+\boldsymbol{\mathsf{L}}^*\boldsymbol{\mathsf{u}}
+\boldsymbol{\mathsf{e}}_1).
\end{cases}
\end{equation}
Then,
$(\boldsymbol{\mathsf{y}},\boldsymbol{\mathsf{u}}) =\boldsymbol{\mathsf{J}}_{\boldsymbol{\mathsf{V}}\boldsymbol{\mathsf{Q}}}(\boldsymbol{\mathsf{z}}-\boldsymbol{\mathsf{V}}\boldsymbol{\mathsf{R}}\boldsymbol{\mathsf{z}}+\boldsymbol{\mathsf{s}})$
where 
\begin{equation}\label{e:exps2}
\boldsymbol{\mathsf{s}} = \big(-\boldsymbol{\mathsf{W}}\boldsymbol{\mathsf{e}}_1,
(\boldsymbol{\mathsf{U}}^{-1}-\boldsymbol{\mathsf{L}}\boldsymbol{\mathsf{W}}\boldsymbol{\mathsf{L}}^*)^{-1}
(\boldsymbol{\mathsf{e}}_2+\boldsymbol{\mathsf{L}}\boldsymbol{\mathsf{W}}\boldsymbol{\mathsf{e}}_1)\big).
\end{equation}
\end{lemma}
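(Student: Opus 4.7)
The plan is to mimic the derivation of Lemma~\ref{lem:resespprod1}, exploiting the fact that the preconditioning operator $\boldsymbol{\mathsf{V}}$ in \eqref{e:defV2} is block-diagonal rather than of Schur-complement type. Introduce a generic parameter $\boldsymbol{\mathsf{s}}=(\boldsymbol{\mathsf{c}}',\boldsymbol{\mathsf{e}}')\in\KKK$ and use the standard equivalence
\[
(\boldsymbol{\mathsf{y}},\boldsymbol{\mathsf{u}})=\boldsymbol{\mathsf{J}}_{\boldsymbol{\mathsf{V}}\boldsymbol{\mathsf{Q}}}(\boldsymbol{\mathsf{z}}-\boldsymbol{\mathsf{V}}\boldsymbol{\mathsf{R}}\boldsymbol{\mathsf{z}}+\boldsymbol{\mathsf{s}})\ \Longleftrightarrow\ \boldsymbol{\mathsf{V}}^{-1}\big(\boldsymbol{\mathsf{z}}+\boldsymbol{\mathsf{s}}-(\boldsymbol{\mathsf{y}},\boldsymbol{\mathsf{u}})\big)-\boldsymbol{\mathsf{R}}\boldsymbol{\mathsf{z}}\in\boldsymbol{\mathsf{Q}}(\boldsymbol{\mathsf{y}},\boldsymbol{\mathsf{u}}).
\]
By Lemma~\ref{le:2}\ref{le:2i}, $\boldsymbol{\mathsf{V}}$ is invertible with $\boldsymbol{\mathsf{V}}^{-1}\colon(\boldsymbol{\mathsf{x}},\boldsymbol{\mathsf{v}})\mapsto\big(\boldsymbol{\mathsf{W}}^{-1}\boldsymbol{\mathsf{x}},(\boldsymbol{\mathsf{U}}^{-1}-\boldsymbol{\mathsf{L}}\boldsymbol{\mathsf{W}}\boldsymbol{\mathsf{L}}^*)\boldsymbol{\mathsf{v}}\big)$, and the assumption $\boldsymbol{\mathsf{A}}=\boldsymbol{\mathsf{0}}$ in \eqref{e:maximal1} reduces the above to the decoupled system
\[
\boldsymbol{\mathsf{W}}^{-1}(\boldsymbol{\mathsf{x}}+\boldsymbol{\mathsf{c}}'-\boldsymbol{\mathsf{y}})-\boldsymbol{\mathsf{C}}\boldsymbol{\mathsf{x}}=\boldsymbol{\mathsf{L}}^*\boldsymbol{\mathsf{u}},
\]
\[
(\boldsymbol{\mathsf{U}}^{-1}-\boldsymbol{\mathsf{L}}\boldsymbol{\mathsf{W}}\boldsymbol{\mathsf{L}}^*)(\boldsymbol{\mathsf{v}}+\boldsymbol{\mathsf{e}}'-\boldsymbol{\mathsf{u}})-\boldsymbol{\mathsf{D}}^{-1}\boldsymbol{\mathsf{v}}+\boldsymbol{\mathsf{L}}\boldsymbol{\mathsf{y}}\in\boldsymbol{\mathsf{B}}^{-1}\boldsymbol{\mathsf{u}}.
\]

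Inverting the primal equation yields $\boldsymbol{\mathsf{y}}=\boldsymbol{\mathsf{x}}+\boldsymbol{\mathsf{c}}'-\boldsymbol{\mathsf{W}}(\boldsymbol{\mathsf{C}}\boldsymbol{\mathsf{x}}+\boldsymbol{\mathsf{L}}^*\boldsymbol{\mathsf{u}})$, which agrees with the second line of \eqref{e:expresprod2} precisely when $\boldsymbol{\mathsf{c}}'=-\boldsymbol{\mathsf{W}}\boldsymbol{\mathsf{e}}_1$. Substituting this value of $\boldsymbol{\mathsf{y}}$ into the dual inclusion, the term $-\boldsymbol{\mathsf{L}}\boldsymbol{\mathsf{W}}\boldsymbol{\mathsf{L}}^*\boldsymbol{\mathsf{u}}$ arising from $\boldsymbol{\mathsf{L}}\boldsymbol{\mathsf{y}}$ cancels the $+\boldsymbol{\mathsf{L}}\boldsymbol{\mathsf{W}}\boldsymbol{\mathsf{L}}^*\boldsymbol{\mathsf{u}}$ coming from $(\boldsymbol{\mathsf{U}}^{-1}-\boldsymbol{\mathsf{L}}\boldsymbol{\mathsf{W}}\boldsymbol{\mathsf{L}}^*)\boldsymbol{\mathsf{u}}$. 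On the other hand, the first line of \eqref{e:expresprod2} is equivalent, after applying $\boldsymbol{\mathsf{U}}^{-1}$, to
\[
\boldsymbol{\mathsf{U}}^{-1}(\boldsymbol{\mathsf{v}}-\boldsymbol{\mathsf{u}})+\boldsymbol{\mathsf{L}}\bigl(\boldsymbol{\mathsf{x}}-\boldsymbol{\mathsf{W}}(\boldsymbol{\mathsf{C}}\boldsymbol{\mathsf{x}}+\boldsymbol{\mathsf{L}}^*\boldsymbol{\mathsf{v}})\bigr)-\boldsymbol{\mathsf{D}}^{-1}\boldsymbol{\mathsf{v}}+\boldsymbol{\mathsf{e}}_2\in\boldsymbol{\mathsf{B}}^{-1}\boldsymbol{\mathsf{u}}.
\]
Matching this inclusion term by term with the reduced dual inclusion amounts to the single linear identity $(\boldsymbol{\mathsf{U}}^{-1}-\boldsymbol{\mathsf{L}}\boldsymbol{\mathsf{W}}\boldsymbol{\mathsf{L}}^*)\boldsymbol{\mathsf{e}}'-\boldsymbol{\mathsf{L}}\boldsymbol{\mathsf{W}}\boldsymbol{\mathsf{e}}_1=\boldsymbol{\mathsf{e}}_2$, from which $\boldsymbol{\mathsf{e}}'=(\boldsymbol{\mathsf{U}}^{-1}-\boldsymbol{\mathsf{L}}\boldsymbol{\mathsf{W}}\boldsymbol{\mathsf{L}}^*)^{-1}(\boldsymbol{\mathsf{e}}_2+\boldsymbol{\mathsf{L}}\boldsymbol{\mathsf{W}}\boldsymbol{\mathsf{e}}_1)$; together with $\boldsymbol{\mathsf{c}}'=-\boldsymbol{\mathsf{W}}\boldsymbol{\mathsf{e}}_1$ this is exactly the value of $\boldsymbol{\mathsf{s}}$ prescribed in \eqref{e:exps2}.

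The only nontrivial point along the way is the invertibility of $\boldsymbol{\mathsf{U}}^{-1}-\boldsymbol{\mathsf{L}}\boldsymbol{\mathsf{W}}\boldsymbol{\mathsf{L}}^*$, needed to define $\boldsymbol{\mathsf{V}}^{-1}$ and to solve for $\boldsymbol{\mathsf{e}}'$, but this is immediate from the hypothesis $\|\boldsymbol{\mathsf{U}}^{1/2}\boldsymbol{\mathsf{L}}\boldsymbol{\mathsf{W}}^{1/2}\|<1$ via the strong positivity estimate \eqref{e:strongposUinv1} already established in the proof of Lemma~\ref{le:1}\ref{le:1i} and reused in Lemma~\ref{le:2}\ref{le:2i}. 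Apart from this, the argument is purely algebraic bookkeeping, and is in fact shorter than the proof of Lemma~\ref{lem:resespprod1} because the block-diagonal form of $\boldsymbol{\mathsf{V}}$ eliminates the Schur-complement cross-terms appearing in \eqref{e:defV1}.
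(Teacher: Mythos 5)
Your proof is correct and follows essentially the same route as the paper's: both start from the equivalence $(\boldsymbol{\mathsf{y}},\boldsymbol{\mathsf{u}})=\boldsymbol{\mathsf{J}}_{\boldsymbol{\mathsf{V}}\boldsymbol{\mathsf{Q}}}(\boldsymbol{\mathsf{z}}-\boldsymbol{\mathsf{V}}\boldsymbol{\mathsf{R}}\boldsymbol{\mathsf{z}}+\boldsymbol{\mathsf{s}})\Leftrightarrow\boldsymbol{\mathsf{V}}^{-1}(\boldsymbol{\mathsf{z}}+\boldsymbol{\mathsf{s}}-(\boldsymbol{\mathsf{y}},\boldsymbol{\mathsf{u}}))-\boldsymbol{\mathsf{R}}\boldsymbol{\mathsf{z}}\in\boldsymbol{\mathsf{Q}}(\boldsymbol{\mathsf{y}},\boldsymbol{\mathsf{u}})$ with the block-diagonal $\boldsymbol{\mathsf{V}}^{-1}$, solve the primal equation (exploiting $\boldsymbol{\mathsf{A}}=\boldsymbol{\mathsf{0}}$), substitute into the dual inclusion so that the $\boldsymbol{\mathsf{L}}\boldsymbol{\mathsf{W}}\boldsymbol{\mathsf{L}}^*\boldsymbol{\mathsf{u}}$ terms cancel, and identify the error terms via the same linear system, invoking the invertibility of $\boldsymbol{\mathsf{U}}^{-1}-\boldsymbol{\mathsf{L}}\boldsymbol{\mathsf{W}}\boldsymbol{\mathsf{L}}^*$ established in \eqref{e:strongposUinv1}. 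The only difference is presentational (you match the two inclusions term by term rather than rewriting the chain of equivalences in full), and your identification of $(\boldsymbol{\mathsf{c}}',\boldsymbol{\mathsf{e}}')$ agrees exactly with \eqref{e:exps2}.
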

\begin{proof}
Let $\boldsymbol{\mathsf{z}}= (\boldsymbol{\mathsf{x}},\boldsymbol{\mathsf{v}})\in \KKK$ and let $\boldsymbol{\mathsf{s}}=(\boldsymbol{\mathsf{e}}'_1,\boldsymbol{\mathsf{e}}'_2)\in \KKK$.
The following equivalences are obtained:
\begin{align}
&(\boldsymbol{\mathsf{y}},\boldsymbol{\mathsf{u}}) = \boldsymbol{\mathsf{J}}_{\boldsymbol{\mathsf{V}}\boldsymbol{\mathsf{Q}}}(\boldsymbol{\mathsf{z}}-
\boldsymbol{\mathsf{V}}\boldsymbol{\mathsf{R}}\boldsymbol{\mathsf{z}}+\boldsymbol{\mathsf{s}})\nonumber\\
\Leftrightarrow \quad & 
\boldsymbol{\mathsf{V}}^{-1}(\boldsymbol{\mathsf{z}}+\boldsymbol{\mathsf{s}}-(\boldsymbol{\mathsf{y}},\boldsymbol{\mathsf{u}}))
-\boldsymbol{\mathsf{R}}\boldsymbol{\mathsf{z}} \in \boldsymbol{\mathsf{Q}} (\boldsymbol{\mathsf{y}},\boldsymbol{\mathsf{u}})\nonumber\\
\Leftrightarrow \quad & 
\begin{cases}
\boldsymbol{\mathsf{W}}^{-1}(\boldsymbol{\mathsf{x}}-\boldsymbol{\mathsf{y}}+\boldsymbol{\mathsf{e}}'_1)-\boldsymbol{\mathsf{L}}^*\boldsymbol{\mathsf{u}}
- \boldsymbol{\mathsf{C}}\boldsymbol{\mathsf{x}} = \boldsymbol{0} \\
(\boldsymbol{\mathsf U}^{-1}-\boldsymbol{\mathsf{L}}\boldsymbol{\mathsf W}\boldsymbol{\mathsf{L}}^*)(\boldsymbol{\mathsf{v}}-\boldsymbol{\mathsf{u}}+\boldsymbol{\mathsf{e}}'_2)+\boldsymbol{\mathsf{L}}\boldsymbol{\mathsf{y}}
- \boldsymbol{\mathsf{D}}^{-1}\boldsymbol{\mathsf{v}} \in \boldsymbol{\mathsf{B}}^{-1} \boldsymbol{\mathsf{u}}
\end{cases}\nonumber\\
\Leftrightarrow \quad & 
\begin{cases}
\boldsymbol{\mathsf{y}} = 
\boldsymbol{\mathsf{x}}-\boldsymbol{\mathsf{W}}(\boldsymbol{\mathsf{C}}\boldsymbol{\mathsf{x}}+\boldsymbol{\mathsf{L}}^*\boldsymbol{\mathsf{u}})+\boldsymbol{\mathsf{e}}'_1\\
\boldsymbol{\mathsf{v}}+\boldsymbol{\mathsf{e}}'_2+\boldsymbol{\mathsf U}
\left(
\boldsymbol{\mathsf{L}}\big(\boldsymbol{\mathsf{x}}
-\boldsymbol{\mathsf{W}}(\boldsymbol{\mathsf{C}}\boldsymbol{\mathsf{x}}+\boldsymbol{\mathsf{L}}^*\boldsymbol{\mathsf{v}}+\boldsymbol{\mathsf{L}}^*\boldsymbol{\mathsf{e}}'_2)+\boldsymbol{\mathsf{e}}'_1\big)
- \boldsymbol{\mathsf{D}}^{-1}\boldsymbol{\mathsf{v}}
\right)
\in (\ID+\boldsymbol{\mathsf{U}}\boldsymbol{\mathsf{B}}^{-1})\boldsymbol{\mathsf{u}}
\end{cases}
\nonumber\\
\Leftrightarrow \quad & 
\begin{cases}
\boldsymbol{\mathsf{u}} = \boldsymbol{\mathsf{J}}_{\boldsymbol{\mathsf{U}}\boldsymbol{\mathsf{B}}^{-1}}\left(
\boldsymbol{\mathsf{v}}+\boldsymbol{\mathsf{e}}'_2+\boldsymbol{\mathsf U}
\left(
\boldsymbol{\mathsf{L}}\big(\boldsymbol{\mathsf{x}}
-\boldsymbol{\mathsf{W}}(\boldsymbol{\mathsf{C}}\boldsymbol{\mathsf{x}}+\boldsymbol{\mathsf{L}}^*\boldsymbol{\mathsf{v}}+\boldsymbol{\mathsf{L}}^*
\boldsymbol{\mathsf{e}}'_2)+\boldsymbol{\mathsf{e}}'_1\big)- \boldsymbol{\mathsf{D}}^{-1}\boldsymbol{\mathsf{v}}
\right)
\right)\\
\boldsymbol{\mathsf{y}} = 
\boldsymbol{\mathsf{x}}-\boldsymbol{\mathsf{W}}(\boldsymbol{\mathsf{C}}\boldsymbol{\mathsf{x}}+\boldsymbol{\mathsf{L}}^*\boldsymbol{\mathsf{u}})+\boldsymbol{\mathsf{e}}'_1,
\end{cases}
\end{align} 
which lead to \eqref{e:expresprod2} provided that
\begin{equation}
\begin{cases}
-\boldsymbol{\mathsf{W}}\boldsymbol{\mathsf{e}}_1 = \boldsymbol{\mathsf{e}}'_1\\
\boldsymbol{\mathsf{U}}\boldsymbol{\mathsf{e}}_2 = \boldsymbol{\mathsf U}
\boldsymbol{\mathsf{L}}(\boldsymbol{\mathsf{e}}'_1-\boldsymbol{\mathsf{W}}\boldsymbol{\mathsf{L}}^*
\boldsymbol{\mathsf{e}}'_2)+\boldsymbol{\mathsf{e}}'_2.
\end{cases}
\end{equation}
Since $\boldsymbol{\mathsf U}^{-1}-\boldsymbol{\mathsf{L}}\boldsymbol{\mathsf W}\boldsymbol{\mathsf{L}}^*$ is an isomophism,
the latter equalities are equivalent to \eqref{e:exps2}.
\end{proof}

From the above two lemmas, a second type of block-coordinate primal-dual algorithm can be deduced to solve Problem~\ref{prob:main}
in the case when $\boldsymbol{\mathsf{A}} = \boldsymbol{\mathsf{0}}$.
\begin{proposition}\label{p:algopdmon2}
Let $\boldsymbol{\mathsf{W}}$, $\boldsymbol{\mathsf{U}}$, $\mu$, and $\nu$
be defined as in Proposition \ref{p:algopdmon1}.
Suppose that 
\begin{equation}\label{e:condvartheta2}
\min\big\{\mu,\nu(1-\|\boldsymbol{\mathsf U}^{1/2}\boldsymbol{\mathsf{L}}\boldsymbol{\mathsf W}^{1/2}\|^2)\big\} > \frac12.
\end{equation}
Let $(\lambda_n)_{n\in\NN}$  be a sequence in $\left]0,1\right]$ such that 
$\inf_{n\in\NN}\lambda_n>0$,
let $\boldsymbol{x}_0$ and 
$(\boldsymbol{c}_n)_{n\in\NN}$ be $\HHH$-valued random variables,
let $\boldsymbol{v}_0$, 
$(\boldsymbol{b}_n)_{n\in\NN}$, and 
$(\boldsymbol{d}_n)_{n\in\NN}$ be $\GGG$-valued random 
variables, and let $(\boldsymbol{\varepsilon}_n)_{n\in\NN}$ be identically distributed 
$\mathbb{D}_{p+q}$-valued random variables. 
Iterate
\begin{equation}\label{e:PDcoord2}
\begin{array}{l}
\text{for}\;n=0,1,\ldots\\
\left\lfloor
\begin{array}{l}
\text{for}\;j=1,\ldots,p\\
\left\lfloor
\begin{array}{l}
\eta_{j,n} = \max\menge{\varepsilon_{p+k,n}}{k\in \mathbb{L}_j^*}\\
\displaystyle w_{j,n} =
\eta_{j,n}\big(x_{j,n}-\mathsf{W}_j(\mathsf{C}_j x_{j,n} +c_{j,n})\big)
\end{array}
\right.\\
\text{for}\;k=1,\ldots,q\\
\left\lfloor
\begin{array}{l}
\displaystyle u_{k,n} = \varepsilon_{p+k,n}\Big(\mathsf{J}_{\mathsf{U}_k\mathsf{B}_k^{-1}}\big(v_{k,n}+\mathsf{U}_k(\sum_{j \in \mathbb{L}_k}\mathsf{L}_{k,j} (w_{j,n}-
\mathsf{W}_j\sum_{k'\in \mathbb{L}_j^*} \mathsf{L}_{k',j}^* v_{k',n})- \mathsf{D}_k^{-1}v_{k,n}+d_{k,n})\big)+b_{k,n}\Big)\\
v_{k,n+1} = v_{k,n}+\lambda_n \varepsilon_{p+k,n} (u_{k,n}-v_{k,n})
\end{array}
\right.\\
\text{for}\;j=1,\ldots,p\\
\left\lfloor
\begin{array}{l}
\displaystyle  x_{j,n+1} = x_{j,n}+\lambda_n \varepsilon_{j,n}  \Big(w_{j,n}-\mathsf{W}_j \sum_{k\in \mathbb{L}_j^*} \mathsf{L}^*_{k,j} u_{k,n}- x_{j,n}\Big),
\end{array}
\right.\vspace*{0.1cm}
\end{array}
\right.
\end{array}
\end{equation}
and set $(\forall n\in\NN)$ $\EEE_n=\sigma(\boldsymbol{\varepsilon}_n)$ and $\boldsymbol{\XX}_n=
\sigma(\boldsymbol{x}_{n'},\boldsymbol{v}_{n'})_{0\leq n'\leq n}$.
In addition, assume that
\begin{enumerate}
\item
\label{c:PDcoord2i}
$\sum_{n\in\NN}\sqrt{\EC{\|\boldsymbol{b}_n\|^2}
{\boldsymbol{\XX}_n}}<\pinf$,
$\sum_{n\in\NN}\sqrt{\EC{\|\boldsymbol{c}_n\|^2}
{\boldsymbol{\XX}_n}}<\pinf$, and
$\sum_{n\in\NN}\sqrt{\EC{\|\boldsymbol{d}_n\|^2}
{\boldsymbol{\XX}_n}}<\pinf$ $\as$
\end{enumerate}
and  Conditions \ref{c:PDcoord1iisym}-\ref{c:PDcoord1iiisym} in Proposition \ref{p:algopdmon1sym} hold.\\
If, in Problem~\ref{prob:main}, $(\forall j \in \{1,\ldots,p\})$ $\mathsf{A}_j = 0$,
then $(\boldsymbol{x}_n)_{n\in\NN}$ converges weakly $\as$ to an 
$\boldsymbol{\mathsf{F}}$-valued random variable, and
$(\boldsymbol{v}_n)_{n\in\NN}$ 
converges weakly $\as$ to an
$\boldsymbol{\mathsf{F}}^*$-valued random variable.
\end{proposition}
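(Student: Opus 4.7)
The plan is to mirror the proof of Proposition \ref{p:algopdmon1}, replacing Lemma \ref{le:1} and Lemma \ref{lem:resespprod1} by their diagonal counterparts Lemma \ref{le:2} and Lemma \ref{lem:resespprod2}, and specializing to the case $\boldsymbol{\mathsf A}=\boldsymbol{\mathsf 0}$. The goal is to recast \eqref{e:PDcoord2} as an instance of the forward-backward iteration \eqref{e:FBPrecond} with $m=p+q$, $\gamma_n=1$, $\boldsymbol z_n=(\boldsymbol x_n,\boldsymbol v_n)$, and $\boldsymbol{\mathsf V}$ given by \eqref{e:defV2}, so that Proposition \ref{p:FBPreconf} delivers weak convergence to a $\boldsymbol{\mathsf Z}$-valued random variable and Proposition \ref{p:probmainbis}\ref{p:probmainbisiii} transports this into convergence to an $\boldsymbol{\mathsf F}\times\boldsymbol{\mathsf F}^*$-valued pair.

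First I would use the sweeping assumption \ref{c:PDcoord1iiisym} borrowed from Proposition \ref{p:algopdmon1sym} to show that whenever $\varepsilon_{j,n}=1$ one also has $\eta_{j,n}=1$; indeed, if $\varepsilon_{j,n}=1$ then, by \eqref{e:defLjs}, $\mathbb{L}_j^*\neq\emp$ and any $k\in\mathbb{L}_j^*$ satisfies $j\in\mathbb{L}_k$, so $\varepsilon_{p+k,n}=1$ and $\eta_{j,n}=1$. Consequently, at every iteration $n$, the auxiliary quantity $w_{j,n}$ is computed exactly as $x_{j,n}-\mathsf W_j(\mathsf C_j x_{j,n}+c_{j,n})$ whenever it enters either the dual loop (for the indices $j\in\mathbb L_k$ with $\varepsilon_{p+k,n}=1$) or the primal loop (when $\varepsilon_{j,n}=1$). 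Substituting this expression into the $u_{k,n}$- and $x_{j,n+1}$-updates of \eqref{e:PDcoord2} and comparing with the two lines of \eqref{e:expresprod2} applied to the aggregated operators of Proposition \ref{p:probmainbis}, the iteration assumes the template of \eqref{e:FBPrecond} with error pair $\boldsymbol t_n=(\boldsymbol 0,\boldsymbol b_n)$ and with $\boldsymbol s_n$ of the form \eqref{e:exps2}, where $\boldsymbol e_1$ depends linearly on $\boldsymbol c_n$ and $\boldsymbol e_2$ depends linearly on $\boldsymbol c_n$ and $\boldsymbol d_n$.

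To close the argument I would verify the hypotheses of Proposition \ref{p:FBPreconf}. Strong positivity and self-adjointness of $\boldsymbol{\mathsf V}\in\BL(\KKK)$ follow from Lemma \ref{le:2}\ref{le:2i} (note that \eqref{e:condvartheta2} forces $\|\boldsymbol{\mathsf U}^{1/2}\boldsymbol{\mathsf L}\boldsymbol{\mathsf W}^{1/2}\|<1$). The block-diagonal structure of $\boldsymbol{\mathsf W}$ and $\boldsymbol{\mathsf U}$ yields, exactly as in the proof of Proposition \ref{p:algopdmon1}, that $\boldsymbol{\mathsf W}^{1/2}\boldsymbol{\mathsf C}\boldsymbol{\mathsf W}^{1/2}$ is $\mu$-cocoercive and $\boldsymbol{\mathsf U}^{1/2}\boldsymbol{\mathsf D}^{-1}\boldsymbol{\mathsf U}^{1/2}$ is $\nu$-cocoercive, so Lemma \ref{le:2}\ref{le:2ii} gives $\vartheta$-cocoercivity of $\boldsymbol{\mathsf V}^{1/2}\boldsymbol{\mathsf R}\boldsymbol{\mathsf V}^{1/2}$ with $\vartheta$ as in \eqref{e:vartheta2}; assumption \eqref{e:condvartheta2} is then precisely $\sup_n\gamma_n=1<2\vartheta$. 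The summability assumption \ref{p:nyc2014-04-03ii} of Proposition \ref{p:FBPreconf} is obtained from Assumption \ref{c:PDcoord2i} by triangle inequality estimates using the boundedness of $\boldsymbol{\mathsf L}$, $\boldsymbol{\mathsf W}$ and $(\boldsymbol{\mathsf U}^{-1}-\boldsymbol{\mathsf L}\boldsymbol{\mathsf W}\boldsymbol{\mathsf L}^*)^{-1}$, while the independence/activation requirement \ref{p:nyc2014-04-03iv} follows from Condition \ref{c:PDcoord1iisym} combined with the fact, derived above, that $\varepsilon_{j,0}=1\Rightarrow\varepsilon_{p+k,0}=1$ for some $k\in\mathbb L_j^*$, which, jointly with $\PP[\varepsilon_{j,0}=1]>0$ for every $j$, forces $\PP[\varepsilon_{p+k,0}=1]>0$ for every $k\in\bigcup_j\mathbb L_j^*=\{1,\ldots,q\}$ (the last equality being ensured by \eqref{e:defLk}).

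The main obstacle I anticipate is purely bookkeeping: the iteration \eqref{e:PDcoord2} proceeds in three sequential loops whereas the product template \eqref{e:FBPrecond} is written as a single loop, so one must carefully verify that (i) $w_{j,n}$ is indeed available at every place it is read, (ii) the dual argument $\sum_{j\in\mathbb L_k}\mathsf L_{k,j}(w_{j,n}-\mathsf W_j\sum_{k'\in\mathbb L_j^*}\mathsf L_{k',j}^*v_{k',n})$ coincides componentwise with $[\boldsymbol{\mathsf L}(\boldsymbol x_n-\boldsymbol{\mathsf W}(\boldsymbol{\mathsf C}\boldsymbol x_n+\boldsymbol{\mathsf L}^*\boldsymbol v_n))]_k$ plus an error that matches \eqref{e:exps2}, and (iii) the way $c_{j,n}$ enters the dual step (through $-\boldsymbol{\mathsf L}\boldsymbol{\mathsf W}\boldsymbol c_n$) is consistent with its appearance in the primal step. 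Once this identification is made, all remaining steps are routine specializations of what was done for Proposition \ref{p:algopdmon1}.
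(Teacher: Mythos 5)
Your proposal is correct and follows essentially the same route as the paper: rewrite \eqref{e:PDcoord2} using Condition \ref{c:PDcoord1iiisym} so that $w_{j,n}$ is available wherever it is read, invoke Lemma \ref{lem:resespprod2} to cast the iteration as \eqref{e:FBPrecond} with $\boldsymbol{\mathsf V}$ given by \eqref{e:defV2} and $\gamma_n\equiv 1$, then check cocoercivity via Lemma \ref{le:2}\ref{le:2ii}, summability, and the activation condition before applying Propositions \ref{p:FBPreconf} and \ref{p:probmainbis}\ref{p:probmainbisiii}. The only bookkeeping detail to fix is that the dual error $\boldsymbol b_n$ also propagates into the primal component of $\boldsymbol s_n$ (the paper takes $\boldsymbol e_{1,n}=\boldsymbol{\mathsf L}^*\boldsymbol b_n+\boldsymbol c_n$, not a function of $\boldsymbol c_n$ alone), which is harmless for the summability estimate.
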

\begin{proof} First note that, in view of Condition \ref{c:PDcoord1iiisym} in Proposition \ref{p:algopdmon1sym} (since $(\forall j \in \{1,\ldots,p\})$ $\mathbb{L}_j^*\neq \emp$), Iterations \eqref{e:PDcoord2} are equivalent to
\begin{equation}\label{e:PDcoord2prim}
\begin{array}{l}
\text{for}\;n=0,1,\ldots\\
\left\lfloor
\begin{array}{l}
\text{for}\;k=1,\ldots,q\\
\left\lfloor
\begin{array}{l}
\zeta_{k,n} = \max\big\{\varepsilon_{p+k,n},(\varepsilon_{j,n})_{j\in \mathbb{L}_k}\big\}
\end{array}
\right.\\
\text{for}\;j=1,\ldots,p\\
\left\lfloor
\begin{array}{l}
\eta_{j,n} = \max\big\{\varepsilon_{j,n}, (\zeta_{k,n})_{k \in \mathbb{L}_j^*}\big\}\\
\displaystyle w_{j,n} =
\eta_{j,n}\big(x_{j,n}-\mathsf{W}_j(\mathsf{C}_j x_{j,n} +c_{j,n})\big)
\end{array}
\right.\\
\text{for}\;k=1,\ldots,q\\
\left\lfloor
\begin{array}{l}
\displaystyle u_{k,n} = \zeta_{k,n}\Big(\mathsf{J}_{\mathsf{U}_k\mathsf{B}_k^{-1}}\big(v_{k,n}+\mathsf{U}_k(\sum_{j \in \mathbb{L}_k}\mathsf{L}_{k,j} (w_{j,n}-
\mathsf{W}_j\sum_{k'\in \mathbb{L}_j^*} \mathsf{L}_{k',j}^* v_{k',n})- \mathsf{D}_k^{-1}v_{k,n}+d_{k,n})\big)+b_{k,n}\Big)\\
v_{k,n+1} = v_{k,n}+\lambda_n \varepsilon_{p+k,n} (u_{k,n}-v_{k,n})
\end{array}
\right.\\
\text{for}\;j=1,\ldots,p\\
\left\lfloor
\begin{array}{l}
\displaystyle  x_{j,n+1} = x_{j,n}+\lambda_n \varepsilon_{j,n}  \Big(w_{j,n}-\mathsf{W}_j \sum_{k\in \mathbb{L}_j^*} \mathsf{L}^*_{k,j} u_{k,n}- x_{j,n}\Big).
\end{array}
\right.\vspace*{0.1cm}
\end{array}
\right.
\end{array}
\end{equation}
Furthermore, Condition \eqref{e:condvartheta2} implies that $\|\boldsymbol{\mathsf U}^{1/2}\boldsymbol{\mathsf{L}}\boldsymbol{\mathsf W}^{1/2}\|<1$.
Hence, Lemma \ref{lem:resespprod2} allows us to show the equivalence between
Algorithms \eqref{e:PDcoord2prim} and \eqref{e:FBPrecond} when $\boldsymbol{\mathsf{V}}$
is given by \eqref{e:defV2}, $\boldsymbol{\mathsf{Q}}$ is given by \eqref{e:maximal1} (with $\boldsymbol{\mathsf{A}}=\boldsymbol{\mathsf{0}}$), 
and $\boldsymbol{\mathsf{R}}$ is given by \eqref{e:maximal21},
provided that, for every $n\in \NN$, \eqref{e:equivFB1}-\eqref{e:equivFB2}
hold and
\begin{align}
& \boldsymbol{t}_n = (\boldsymbol{0},\boldsymbol{b}_n),\\
&\boldsymbol{s}_n = \big(-\boldsymbol{\mathsf{W}}\boldsymbol{e}_{1,n},
(\boldsymbol{\mathsf{U}}^{-1}-\boldsymbol{\mathsf{L}}\boldsymbol{\mathsf{W}}\boldsymbol{\mathsf{L}}^*)^{-1}
(\boldsymbol{\mathsf{L}}\boldsymbol{\mathsf{W}}\boldsymbol{\mathsf{L}}^*\boldsymbol{b}_n+\boldsymbol{d}_n)\big),\\
& \boldsymbol{e}_{1,n} = \boldsymbol{\mathsf{L}}^*\boldsymbol{b}_n+\boldsymbol{c}_n.
\end{align}
In the proof of Proposition \ref{p:algopdmon1}, we have seen that $\boldsymbol{\mathsf W}^{1/2} \boldsymbol{\mathsf{C}} \boldsymbol{\mathsf W}^{1/2}$ is $\mu$-cocoercive and
$\boldsymbol{\mathsf U}^{1/2} \boldsymbol{\mathsf{D}}^{-1} \boldsymbol{\mathsf U}^{1/2}$ is $\nu$-cocoercive.
According to Lemma~\ref{le:2}\ref{le:2ii}, $\boldsymbol{\mathsf V}^{1/2} \boldsymbol{\mathsf R} \boldsymbol{\mathsf V}^{1/2}$
is thus $\vartheta$-cocoercive where $\vartheta$ is given by \eqref{e:vartheta2}, and \eqref{e:condvartheta2} means that $1 = \sup_{n\in\NN}\gamma_n<2\vartheta$.
In addition,
\begin{align}
& \sum_{n\in\NN}\sqrt{\EC{\|\boldsymbol{t}_n\|^2}{\boldsymbol{\XX}_n}} =\sum_{n\in\NN}\sqrt{\EC{\|\boldsymbol{b}_n\|^2}{\boldsymbol{\XX}_n}} < \pinf,\\
& \sum_{n\in\NN}\sqrt{\EC{\|\boldsymbol{s}_n\|^2}{\boldsymbol{\XX}_n}}\nonumber\\
&\;\;\le \|\boldsymbol{\mathsf{W}}\boldsymbol{\mathsf{L}}^*\| \sum_{n\in\NN}\sqrt{\EC{\|\boldsymbol{b}_n\|^2}{\boldsymbol{\XX}_n}}
+\|\boldsymbol{\mathsf{W}}\| \sum_{n\in\NN}\sqrt{\EC{\|\boldsymbol{c}_n\|^2}{\boldsymbol{\XX}_n}}\nonumber\\
&\;\;+ \|(\boldsymbol{\mathsf{U}}^{-1}-\boldsymbol{\mathsf{L}}\boldsymbol{\mathsf{W}}\boldsymbol{\mathsf{L}}^*)^{-1}\|
\Big(\|\boldsymbol{\mathsf{L}}\boldsymbol{\mathsf{W}}\boldsymbol{\mathsf{L}}^*\| \sum_{n\in\NN}\sqrt{\EC{\|\boldsymbol{b}_n\|^2}{\boldsymbol{\XX}_n}}+
\sum_{n\in\NN}\sqrt{\EC{\|\boldsymbol{d}_n\|^2}{\boldsymbol{\XX}_n}}\Big) < \pinf.
\end{align}
Since we have assumed that, for every $k \in \{1,\ldots,q\}$, $\mathbb{L}_k \neq \emp$,
Conditions \ref{c:PDcoord1iisym}-\ref{c:PDcoord1iiisym} in Proposition~\ref{p:algopdmon1sym}
guarantee that Condition \ref{p:nyc2014-04-03iv} in Proposition \ref{p:FBPreconf}
is satisfied. The convergence result then follows from this proposition.
\end{proof}
\begin{remark}\  
For every $j\in \{1,\ldots,p\}$, let a cocoercivity constant of $\mathsf{C}_j$ be denoted by $\widetilde{\mu}_j\in \RPP$ and, for every $k\in \{1,\ldots,q\}$,
let a strong monotonicity constant of $\mathsf{D}_k$ be denoted by $\widetilde{\nu}_k\in \RPP$. 
Using \eqref{e:boundnormULW}-\eqref{e:mumutj}, a necessary condition for \eqref{e:condvartheta2} to be satisfied is
\begin{align}
&\min\Big\{(\|\mathsf{W}_j\|^{-1}\widetilde{\mu}_j)_{1\le j \le p},\Big(1-\sum_{j=1}^p \sum_{k=1}^q \|\mathsf{U}_k^{1/2} \mathsf{L}_{k,j} \mathsf{W}_j^{1/2} \|^2\Big)(\|\mathsf{U}_k\|^{-1}\widetilde{\nu}_k)_{1\le k \le q}\Big\}> \frac12. \label{e:condvarthetaalpha4}
\end{align}
In the case when, for every $k\in \{1,\ldots,q\}$, $\mathsf{D}_k^{-1} = \mathsf{0}$, the constants $(\widetilde{\nu}_k)_{1\le k \le q}$ can be chosen arbitrarily large and
the above condition reduces to
\begin{equation}
\left\{
\begin{array}{l}
\displaystyle\sum_{j=1}^p \sum_{k=1}^q \|\mathsf{U}_k^{1/2} \mathsf{L}_{k,j} \mathsf{W}_j^{1/2} \|^2 < 1\\
\displaystyle\min\big\{(\|\mathsf{W}_j\|^{-1}\widetilde{\mu}_j)_{1\le j \le p}\big\} > \frac12.
\end{array}
\right.
\end{equation}
This condition appears to be less restrictive than \eqref{e:condvarthetaalphater}.
\end{remark}

\section{Block-coordinate primal-dual proximal algorithms for convex optimization problems}\label{se:coordopt}
As we will show next, the results obtained in the previous section allow us to deduce a couple of novel primal-dual proximal splitting
algorithms for solving a variety of (possibly nonsmooth) convex optimization problems.
More precisely, we will turn our attention
to the following class of optimization problems, the notation of the previous section being still in force:
\begin{problem}
\label{prob:mainfunc}
For every 
$j\in\{1,\ldots,p\}$, let 
$\mathsf{f}_j\in \Gamma_0(\HH_j)$,
let $\mathsf{h}_j\in \Gamma_0(\HH_j)$ be Lipschitz-differentiable,
and, for every $k\in\{1,\ldots,q\}$, let 
$\mathsf{g}_k\in \Gamma_0(\GG_k)$, 
let $\mathsf{l}_k\in \Gamma_0(\GG_k)$ be strongly convex,
and let $\mathsf{L}_{k,j}\in \BL(\HH_j,\GG_k)$. 
Suppose that \eqref{e:defLk} and \eqref{e:defLjs} hold,
and that there exists $(\overline{\mathsf{x}}_1,\ldots,\overline{\mathsf{x}}_p)\in \HH_1\oplus\cdots\oplus \HH_p$
such that 
\begin{equation}\label{e:qualif}
(\forall j\in\{1,\ldots,p\})\quad 0\in
\partial\mathsf{f}_j(\overline{\mathsf{x}}_j)+\nabla \mathsf{h}_j(\overline{\mathsf{x}}_j)
+\sum_{k=1}^q\mathsf{L}_{k,j}^*(\partial\mathsf{g}_k\infconv\partial\mathsf{l}_k)\bigg(\sum_{j'=1}^p\mathsf{L}_{k,j'}\overline{\mathsf{x}}_{j'}\bigg).
\end{equation}
Let $\widetilde{\boldsymbol{\mathsf{F}}}$ be the set of solutions to the
problem
\begin{equation}\label{e:primopt}
\minimize{\mathsf{x}_1\in\HH_1,\ldots,\mathsf{x}_p\in\HH_p}
{\sum_{j=1}^p\big(\mathsf{f}_j(\mathsf{x}_j)+\mathsf{h}_j(\mathsf{x}_j)\big)+
\sum_{k=1}^q (\mathsf{g}_k\infconv\mathsf{l}_k)
\bigg(\sum_{j=1}^p\mathsf{L}_{k,j}\mathsf{x}_{j}\bigg)}
\end{equation}
and let $\widetilde{\boldsymbol{\mathsf{F}}}^*$ be the set of 
solutions to the dual problem
\begin{equation}
\minimize{\mathsf{v}_1\in\GG_1,\ldots,\mathsf{v}_q\in\GG_q}
{\sum_{j=1}^p (\mathsf{f}_j^*\infconv \mathsf{h}_j^*) \bigg(-\Sum_{k=1}^q
\mathsf{L}_{k,j}^*\mathsf{v}_{k}\bigg)+\sum_{k=1}^q \big(\mathsf{g}_k^*(\mathsf{v}_k)+\mathsf{l}_k^*(\mathsf{v}_k)\big)}.
\end{equation}
Our objective is to find a pair $(\widehat{\boldsymbol{x}},\widehat{\boldsymbol{v}})$ of random variables
such that $\widehat{\boldsymbol{x}}$ is $\widetilde{\boldsymbol{\mathsf{F}}}$-valued and 
$\widehat{\boldsymbol{v}}$ is $\widetilde{\boldsymbol{\mathsf{F}}}^*$-valued.
\end{problem}
Note that the inclusion condition in Problem \ref{prob:mainfunc} is satisfied under a number of relatively weak assumptions:
\begin{proposition} {\rm \cite[Proposition~5.3]{Combettes_P_2013_siam-opt_sys_smi}} 
Consider the setting of Problem \ref{prob:mainfunc}. Suppose that \eqref{e:primopt} has a solution.
Then, the existence of $(\overline{\mathsf{x}}_1,\ldots,\overline{\mathsf{x}}_p)\in \HH_1\oplus\cdots\oplus \HH_p$ satisfying \eqref{e:qualif} is guaranteed in each of the following cases:
\begin{enumerate}
\item For every $j\in \{1,\ldots,p\}$, $\mathsf{f}_j$ is real-valued and, for every $k \in \{1,\ldots,q\}$,
$(\mathsf{x}_j)_{1\le j \le p} \mapsto \sum_{j=1}^p\mathsf{L}_{k,j}\mathsf{x}_{j}$ is surjective.
\item For every $k\in \{1,\ldots,q\}$, $\mathsf{g}_k$ or $\mathsf{l}_k$ is real-valued.
\item $(\HH_j)_{1\le j \le p}$ and $(\GG_k)_{1\le k \le q}$ are finite-dimensional, and
$(\forall j \in \{1,\ldots,p\})$ $(\exists \mathsf{x}_j \in \reli \dom \mathsf{f}_j)$ such that
$(\forall k \in \{1,\ldots,q\})$ $\sum_{j=1}^p\mathsf{L}_{k,j}\mathsf{x}_{j} \in \reli \dom \mathsf{g}_k
+ \reli \dom \mathsf{l}_k$.
\end{enumerate}
\end{proposition}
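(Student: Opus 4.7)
My plan is to read \eqref{e:qualif} as the Fermat optimality condition for the primal objective in \eqref{e:primopt}, applied at any solution $\bar{\boldsymbol{\mathsf{x}}}=(\bar{\mathsf{x}}_j)_{1\le j\le p}$. Setting $\boldsymbol{\mathsf{L}}\colon\HHH\to\GGG\colon\boldsymbol{\mathsf{x}}\mapsto(\sum_{j=1}^p\mathsf{L}_{k,j}\mathsf{x}_j)_{1\le k\le q}$, decompose the objective as $\mathsf{F}(\boldsymbol{\mathsf{x}})=\mathsf{F}_1(\boldsymbol{\mathsf{x}})+\mathsf{F}_2(\boldsymbol{\mathsf{x}})+\mathsf{F}_3(\boldsymbol{\mathsf{L}}\boldsymbol{\mathsf{x}})$ where $\mathsf{F}_1(\boldsymbol{\mathsf{x}})=\sum_j\mathsf{f}_j(\mathsf{x}_j)$, $\mathsf{F}_2(\boldsymbol{\mathsf{x}})=\sum_j\mathsf{h}_j(\mathsf{x}_j)$, and $\mathsf{F}_3(\boldsymbol{\mathsf{y}})=\sum_k(\mathsf{g}_k\infconv\mathsf{l}_k)(\mathsf{y}_k)$. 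Since $\mathsf{F}_2$ is G\^ateaux-differentiable with $\nabla\mathsf{F}_2(\bar{\boldsymbol{\mathsf{x}}})=(\nabla\mathsf{h}_j(\bar{\mathsf{x}}_j))_j$ and since $\mathsf{F}_1$ and $\mathsf{F}_3$ are separable so that $\partial\mathsf{F}_1(\bar{\boldsymbol{\mathsf{x}}})=\cart_j\partial\mathsf{f}_j(\bar{\mathsf{x}}_j)$ and $\partial\mathsf{F}_3(\boldsymbol{\mathsf{L}}\bar{\boldsymbol{\mathsf{x}}})=\cart_k\partial(\mathsf{g}_k\infconv\mathsf{l}_k)(\sum_j\mathsf{L}_{k,j}\bar{\mathsf{x}}_j)$, the condition $0\in\partial\mathsf{F}(\bar{\boldsymbol{\mathsf{x}}})$ given by Fermat's rule will yield \eqref{e:qualif} provided that two calculus identities hold: the sum/chain rule $\partial\mathsf{F}(\bar{\boldsymbol{\mathsf{x}}})=\partial\mathsf{F}_1(\bar{\boldsymbol{\mathsf{x}}})+\nabla\mathsf{F}_2(\bar{\boldsymbol{\mathsf{x}}})+\boldsymbol{\mathsf{L}}^*\partial\mathsf{F}_3(\boldsymbol{\mathsf{L}}\bar{\boldsymbol{\mathsf{x}}})$, and the inf-convolution identity $\partial(\mathsf{g}_k\infconv\mathsf{l}_k)=\partial\mathsf{g}_k\infconv\partial\mathsf{l}_k$ at $\sum_j\mathsf{L}_{k,j}\bar{\mathsf{x}}_j$.

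\textbf{Case-by-case verification.} In case (i), each $\mathsf{f}_j\in\Gamma_0(\HH_j)$ is real-valued and hence continuous on $\HH_j$, so the Moreau--Rockafellar sum rule holds for $\mathsf{F}_1+\mathsf{F}_2+\mathsf{F}_3\circ\boldsymbol{\mathsf{L}}$ without further qualification, while the surjectivity of $\boldsymbol{\mathsf{L}}$ (which follows from the stated surjectivity of each component $\sum_j\mathsf{L}_{k,j}$ once one recognizes that the blocks decouple across $k$) yields the chain rule $\partial(\mathsf{F}_3\circ\boldsymbol{\mathsf{L}})=\boldsymbol{\mathsf{L}}^*(\partial\mathsf{F}_3)\circ\boldsymbol{\mathsf{L}}$. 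In case (ii), real-valuedness of $\mathsf{g}_k$ or $\mathsf{l}_k$ makes $\mathsf{g}_k\infconv\mathsf{l}_k$ real-valued and continuous on $\GG_k$ (the finite value is obtained by bounding one summand and using lower semicontinuity of the conjugate characterization), so $\mathsf{F}_3$ is continuous on $\GGG$; the chain and sum rules then apply without any range condition on $\boldsymbol{\mathsf{L}}$. In case (iii), finite-dimensionality combined with the relative-interior hypothesis on $\reli\dom\mathsf{f}_j$ and on $\reli\dom\mathsf{g}_k+\reli\dom\mathsf{l}_k$ delivers both decompositions via the finite-dimensional versions of the Moreau--Rockafellar qualification (\cite[Corollaries~16.38 and 16.48]{Bauschke_H_2011_book_con_amo}), keeping in mind that $\reli\dom(\mathsf{g}_k\infconv\mathsf{l}_k)\supset\reli\dom\mathsf{g}_k+\reli\dom\mathsf{l}_k$.

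\textbf{Main obstacle.} The delicate step is justifying $\partial(\mathsf{g}_k\infconv\mathsf{l}_k)=\partial\mathsf{g}_k\infconv\partial\mathsf{l}_k$, which requires that the inf-convolution be exact and that $\mathsf{g}_k\infconv\mathsf{l}_k$ lie in $\Gamma_0(\GG_k)$. The strong convexity of $\mathsf{l}_k$ enters crucially here: it guarantees that $\mathsf{l}_k^*$ is single-valued and G\^ateaux-differentiable on $\GG_k$, so $(\mathsf{g}_k^*+\mathsf{l}_k^*)^*=\mathsf{g}_k\infconv\mathsf{l}_k$ is proper, convex, lower-semicontinuous, and the infimum is attained whenever the function is finite. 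Once this is in hand, the Fenchel identity $(\partial\mathsf{g}_k)^{-1}+(\partial\mathsf{l}_k)^{-1}=\partial\mathsf{g}_k^*+\partial\mathsf{l}_k^*\subset\partial(\mathsf{g}_k^*+\mathsf{l}_k^*)$ combined with the qualification supplied by each of (i)--(iii) forces equality upon inversion, yielding the desired identity and closing the argument.
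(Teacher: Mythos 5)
The paper does not prove this proposition at all: it imports it verbatim from \cite[Proposition~5.3]{Combettes_P_2013_siam-opt_sys_smi}, so your argument has to stand on its own. Your overall architecture is the right one: apply Fermat's rule at a primal solution and then show that, under each hypothesis, the subdifferential of the objective splits as $\partial\mathsf{F}_1+\nabla\mathsf{F}_2+\boldsymbol{\mathsf{L}}^*(\partial\boldsymbol{\mathsf{g}}\infconv\partial\boldsymbol{\mathsf{l}})\boldsymbol{\mathsf{L}}$, and your treatments of cases (ii) and (iii) are essentially sound. One inaccuracy in your last paragraph: the identity $\partial(\mathsf{g}_k\infconv\mathsf{l}_k)=\partial\mathsf{g}_k\infconv\partial\mathsf{l}_k$ does not need any of the qualifications (i)--(iii); it already follows from the standing strong convexity of $\mathsf{l}_k$, which forces $\dom\mathsf{l}_k^*=\GG_k$, exactly as the paper itself invokes it (via \cite[Proposition~24.27]{Bauschke_H_2011_book_con_amo}) in the proof of Proposition~\ref{p:algopdopt1}. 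The case hypotheses are needed only for the sum/chain rule.

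The genuine gap is in case (i). You assert that surjectivity of each coordinate map $\mathsf{L}_k\colon(\mathsf{x}_j)_{1\le j\le p}\mapsto\sum_{j=1}^p\mathsf{L}_{k,j}\mathsf{x}_j$ yields surjectivity of the stacked operator $\boldsymbol{\mathsf{L}}\colon\boldsymbol{\mathsf{x}}\mapsto(\mathsf{L}_k\boldsymbol{\mathsf{x}})_{1\le k\le q}$ because ``the blocks decouple across $k$.'' They do not decouple: all $q$ blocks receive the \emph{same} input $\boldsymbol{\mathsf{x}}$. Take $p=1$, $q=2$, $\GG_1=\GG_2=\HH_1$, and $\mathsf{L}_{1,1}=\mathsf{L}_{2,1}=\Id$: each coordinate map is onto, yet $\ran\boldsymbol{\mathsf{L}}$ is only the diagonal of $\GG_1\oplus\GG_2$. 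Your argument for (i) rests entirely on $\ran\boldsymbol{\mathsf{L}}=\GGG$, which together with $\dom(\mathsf{F}_1+\mathsf{F}_2)=\HHH$ gives $0\in\sri\big(\dom\mathsf{F}_3-\boldsymbol{\mathsf{L}}(\dom(\mathsf{F}_1+\mathsf{F}_2))\big)=\sri\GGG$; so the proof collapses at this step, and the per-$k$ hypothesis does not rescue it. Indeed, for $q\ge 2$ the set $\dom\mathsf{F}_3-\ran\boldsymbol{\mathsf{L}}$ can fail to contain $0$ in its strong relative interior even though every $\mathsf{L}_k$ is onto: with the diagonal $\boldsymbol{\mathsf{L}}$ above, $\mathsf{l}_1=\mathsf{l}_2=\iota_{\{0\}}$ (strongly convex and explicitly allowed by the paper), and $\mathsf{g}_1,\mathsf{g}_2$ indicators of two convex sets meeting non-transversally at a single point, the qualification fails and so does the conclusion \eqref{e:qualif}, even though \eqref{e:primopt} is solvable. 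The hypothesis your argument actually uses --- and under which case (i) does go through verbatim --- is surjectivity of $\boldsymbol{\mathsf{L}}$ itself (equivalently, of $(\mathsf{x}_j)_{1\le j\le p}\mapsto(\sum_{j}\mathsf{L}_{k,j}\mathsf{x}_j)_{1\le k\le q}$ as a single map into $\GGG$), which is presumably what the cited source intends; as literally transcribed here, the per-$k$ condition is not sufficient for your proof.
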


The following result can be deduced from Proposition \ref{p:algopdmon1}:
\begin{proposition}\label{p:algopdopt1}
Let $\boldsymbol{\mathsf{W}}$ and $\boldsymbol{\mathsf{U}}$ be defined as in Proposition \ref{p:algopdmon1}.
For every $j\in \{1,\ldots,p\}$, let $\mu_j^{-1}\in \RPP$ be a Lipschitz constant of the gradient of
$\mathsf{h}_j \circ \mathsf{W}_j^{1/2}$ and, for every $k\in \{1,\ldots,q\}$,
let $\nu_k^{-1}\in \RPP$ be a Lipschitz constant of the gradient of $\mathsf{l}_k^* \circ \mathsf{U}_k^{1/2}$.
Suppose that \eqref{e:condvarthetaalpha} holds
where $\vartheta_\alpha$ is defined by \eqref{e:varthetalpha},
$\mu = \min\{\mu_1,\ldots,\mu_p\}$, and $\nu=\min\{\nu_1,\ldots,\nu_q\}$. Let $(\lambda_n)_{n\in\NN}$ 
be a sequence in $\left]0,1\right]$ such that 
$\inf_{n\in\NN}\lambda_n>0$,
let $\boldsymbol{x}_0$, 
$(\boldsymbol{a}_n)_{n\in\NN}$, and 
$(\boldsymbol{c}_n)_{n\in\NN}$ be $\HHH$-valued random variables,
let $\boldsymbol{v}_0$, 
$(\boldsymbol{b}_n)_{n\in\NN}$, and 
$(\boldsymbol{d}_n)_{n\in\NN}$ be $\GGG$-valued random 
variables, and let $(\boldsymbol{\varepsilon}_n)_{n\in\NN}$ be identically distributed 
$\mathbb{D}_{p+q}$-valued random variables. 
Iterate
\begin{equation}\label{e:PDcoordopt1}
\begin{array}{l}
\text{for}\;n=0,1,\ldots\\
\left\lfloor
\begin{array}{l}
\text{for}\;j=1,\ldots,p\\
\left\lfloor
\begin{array}{l}
\displaystyle y_{j,n} =
\varepsilon_{j,n}\Big(\prox_{\mathsf{f}_j}^{\mathsf{W}_j^{-1}}\big(x_{j,n}-\mathsf{W}_j(\sum_{k\in \mathbb{L}_j^*} {\mathsf{L}^*_{k,j} v_{k,n}}
+ \nabla \mathsf{h}_j(x_{j,n}) +c_{j,n})\big)+a_{j,n}\Big)\\
x_{j,n+1} = x_{j,n}+\lambda_n \varepsilon_{j,n}  (y_{j,n}- x_{j,n})
\end{array}
\right.\\
\text{for}\;k=1,\ldots,q\\
\left\lfloor
\begin{array}{l}
\displaystyle u_{k,n} = \varepsilon_{p+k,n}\Big(\prox_{\mathsf{g}_k^*}^{\mathsf{U}_k^{-1}}\big(v_{k,n}+\mathsf{U}_k(\sum_{j \in \mathbb{L}_k}\mathsf{L}_{k,j} (2y_{j,n}-x_{j,n})- 
\nabla\mathsf{l}_k^*(v_{k,n})+d_{k,n})\big)+b_{k,n}\Big)\\
v_{k,n+1} = v_{k,n}+\lambda_n \varepsilon_{p+k,n} (u_{k,n}-v_{k,n}).
\end{array}
\right.
\end{array}
\right.\\
\end{array}
\end{equation}
In addition, assume that Conditions~\ref{c:PDcoord1i}-\ref{c:PDcoord1iii} in Proposition \ref{p:algopdmon1} hold, where
$(\forall n\in\NN)$ $\EEE_n=\sigma(\boldsymbol{\varepsilon}_n)$ and $\boldsymbol{\XX}_n=
\sigma(\boldsymbol{x}_{n'},\boldsymbol{v}_{n'})_{0\leq n'\leq n}$.\\
Then, $(\boldsymbol{x}_n)_{n\in\NN}$ converges weakly $\as$ to a 
$\widetilde{\boldsymbol{\mathsf{F}}}$-valued random variable, and
$(\boldsymbol{v}_n)_{n\in\NN}$ 
converges weakly $\as$ to a
$\widetilde{\boldsymbol{\mathsf{F}}}^*$-valued random variable.
\end{proposition}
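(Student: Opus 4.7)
My plan is to reduce this proposition to Proposition \ref{p:algopdmon1} by identifying the operators appropriately and checking that all the hypotheses transfer. Specifically, I would set $\mathsf{A}_j=\partial \mathsf{f}_j$, $\mathsf{C}_j=\nabla \mathsf{h}_j$, $\mathsf{B}_k=\partial \mathsf{g}_k$, and $\mathsf{D}_k=\partial \mathsf{l}_k$. Since $\mathsf{f}_j,\mathsf{g}_k,\mathsf{l}_k\in\Gamma_0$, these subdifferentials are maximally monotone; since $\mathsf{l}_k$ is strongly convex, $\mathsf{D}_k$ is maximally and strongly monotone; and since $\mathsf{h}_j$ is Lipschitz-differentiable, Baillon--Haddad makes $\mathsf{C}_j$ cocoercive. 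The qualification condition \eqref{e:qualif} is precisely the statement that $\boldsymbol{\mathsf{F}}\ne\emptyset$ in the sense of Problem~\ref{prob:main}, and since $\partial\mathsf{g}_k \infconv \partial\mathsf{l}_k \subset \partial(\mathsf{g}_k\infconv \mathsf{l}_k)$ (with equality in standard settings), any $\boldsymbol{\mathsf{F}}$-valued limit is a minimizer of \eqref{e:primopt}, so $\boldsymbol{\mathsf{F}}\subset\widetilde{\boldsymbol{\mathsf{F}}}$, and symmetrically $\boldsymbol{\mathsf{F}}^*\subset\widetilde{\boldsymbol{\mathsf{F}}}^*$.

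Next I would translate the proximal/gradient steps into resolvent/cocoercive-operator steps. The identity $\prox^{\mathsf{W}_j^{-1}}_{\mathsf{f}_j}=\mathsf{J}_{\mathsf{W}_j\partial\mathsf{f}_j}=\mathsf{J}_{\mathsf{W}_j\mathsf{A}_j}$ is recalled in Section~\ref{se:nota}, and similarly $\prox^{\mathsf{U}_k^{-1}}_{\mathsf{g}_k^*}=\mathsf{J}_{\mathsf{U}_k\partial\mathsf{g}_k^*}=\mathsf{J}_{\mathsf{U}_k\mathsf{B}_k^{-1}}$ since $(\partial\mathsf{g}_k)^{-1}=\partial\mathsf{g}_k^*$. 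Likewise $\nabla\mathsf{l}_k^*=(\partial\mathsf{l}_k)^{-1}=\mathsf{D}_k^{-1}$ is single-valued because $\mathsf{l}_k^*$ is Lipschitz-differentiable. With these substitutions, Algorithm~\eqref{e:PDcoordopt1} becomes Algorithm~\eqref{e:PDcoord1} verbatim.

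The main verification—and the only slightly non-routine step—is that the cocoercivity constants $\mu_j$ and $\nu_k$ given in the statement are valid in the sense required by Proposition~\ref{p:algopdmon1}. Since $\mathsf{W}_j^{1/2}$ is self-adjoint, the chain rule gives $\nabla(\mathsf{h}_j\circ\mathsf{W}_j^{1/2})=\mathsf{W}_j^{1/2}\nabla\mathsf{h}_j\,\mathsf{W}_j^{1/2}=\mathsf{W}_j^{1/2}\mathsf{C}_j\mathsf{W}_j^{1/2}$. As $\mathsf{h}_j\circ\mathsf{W}_j^{1/2}\in\Gamma_0(\HH_j)$ has $\mu_j^{-1}$-Lipschitz gradient by hypothesis, Baillon--Haddad yields that this operator is $\mu_j$-cocoercive. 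The same argument applied to $\mathsf{l}_k^*\circ\mathsf{U}_k^{1/2}$ gives $\nabla(\mathsf{l}_k^*\circ\mathsf{U}_k^{1/2})=\mathsf{U}_k^{1/2}\nabla\mathsf{l}_k^*\,\mathsf{U}_k^{1/2}=\mathsf{U}_k^{1/2}\mathsf{D}_k^{-1}\mathsf{U}_k^{1/2}$, which is $\nu_k$-cocoercive by the stated Lipschitz assumption and Baillon--Haddad.

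With the cocoercivity constants $\mu=\min_j\mu_j$ and $\nu=\min_k\nu_k$, condition \eqref{e:condvarthetaalpha} is exactly the hypothesis of Proposition~\ref{p:algopdmon1}, and Conditions~\ref{c:PDcoord1i}--\ref{c:PDcoord1iii} are assumed to hold. All assumptions of Proposition~\ref{p:algopdmon1} are therefore satisfied, which gives weak almost-sure convergence of $(\boldsymbol{x}_n)_{n\in\NN}$ and $(\boldsymbol{v}_n)_{n\in\NN}$ to $\boldsymbol{\mathsf{F}}$- and $\boldsymbol{\mathsf{F}}^*$-valued random variables respectively; by the inclusions noted above, these limits are $\widetilde{\boldsymbol{\mathsf{F}}}$- and $\widetilde{\boldsymbol{\mathsf{F}}}^*$-valued.
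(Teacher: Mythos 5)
Your proposal is correct and follows essentially the same route as the paper's proof: identify $\mathsf{A}_j=\partial\mathsf{f}_j$, $\mathsf{C}_j=\nabla\mathsf{h}_j$, $\mathsf{B}_k=\partial\mathsf{g}_k$, $\mathsf{D}_k^{-1}=\nabla\mathsf{l}_k^*$, convert the proximal steps to resolvents, obtain the cocoercivity of $\mathsf{W}_j^{1/2}\mathsf{C}_j\mathsf{W}_j^{1/2}$ and $\mathsf{U}_k^{1/2}\mathsf{D}_k^{-1}\mathsf{U}_k^{1/2}$ from the Lipschitz hypotheses via Baillon--Haddad, apply Proposition~\ref{p:algopdmon1}, and conclude with the inclusions $\boldsymbol{\mathsf{F}}\subset\widetilde{\boldsymbol{\mathsf{F}}}$ and $\boldsymbol{\mathsf{F}}^*\subset\widetilde{\boldsymbol{\mathsf{F}}}^*$ obtained from subdifferential calculus and Fermat's rule. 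The only difference is that the paper spells out the last inclusion in more detail (using \eqref{e:qualif} to verify the domain condition needed for the sum rule), but your sketch captures the same argument.
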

\begin{proof} Let us set, for every $j\in \{1,\ldots,p\}$, $\mathsf{A}_j =\partial {\mathsf f}_j$, $\mathsf{C}_j = \nabla \mathsf{h}_j$
and, for every $k\in \{1,\ldots,q\}$, $\mathsf{B}_k = \partial \mathsf{g}_k$, and $\mathsf{D}_k^{-1} = \nabla \mathsf{l}_k^*$. Then, it can be noticed that, 
for every $j\in \{1,\ldots,p\}$ and $k\in \{1,\ldots,q\}$, $\mathsf{J}_{\mathsf{W}_j\mathsf{A}_j} = \prox_{\mathsf{f}_j}^{\mathsf{W}_j^{-1}}$,
$\mathsf{J}_{\mathsf{U}_k\mathsf{B}_k^{-1}} = \prox_{\mathsf{g}_k^*}^{\mathsf{U}_k^{-1}}$, and that the Lipschitz-differentiability assumptions
made on $\mathsf{h}_j$ and $\mathsf{l}_k^*$ are equivalent to the fact that $\mathsf{W}_j^{1/2} \mathsf{C}_j \mathsf{W}_j^{1/2}$ is $\mu_j$-cocoercive
and $\mathsf{U}_k^{1/2} \mathsf{D}_k^{-1} \mathsf{U}_k^{1/2}$ is $\nu_k$-cocoercive \cite[Corollaries~16.42 \& 18.16]{Bauschke_H_2011_book_con_amo}. Proposition \ref{p:algopdmon1} thus allows us to assert that
$(\boldsymbol{x}_n)_{n\in\NN}$ converges weakly $\as$ to an 
$\boldsymbol{\mathsf{F}}$-valued random variable, and
$(\boldsymbol{v}_n)_{n\in\NN}$ converges weakly $\as$ to an
$\boldsymbol{\mathsf{F}}^*$-valued random variable, where $\boldsymbol{\mathsf{F}}$ and $\boldsymbol{\mathsf{F}}^*$ have been defined in Problem \ref{prob:main}.
Let us now show that the first limit is a $\widetilde{\boldsymbol{\mathsf{F}}}$-valued random variable,
and the second one is a $\widetilde{\boldsymbol{\mathsf{F}}}^*$-valued random variable.
Define the separable functions $\boldsymbol{\mathsf{f}}\in \Gamma_0(\HHH)$, $\boldsymbol{\mathsf{h}}\in \Gamma_0(\HHH)$, 
$\boldsymbol{\mathsf{g}}\in \Gamma_0(\GGG)$, and $\boldsymbol{\mathsf{l}}\in \Gamma_0(\GGG)$ as
\begin{align}
& \boldsymbol{\mathsf{f}}\colon \boldsymbol{\mathsf{x}}\mapsto \sum_{j=1}^p \mathsf{f}_j(\mathsf{x}_j), \qquad
\boldsymbol{\mathsf{h}}\colon \boldsymbol{\mathsf{x}}\mapsto \sum_{j=1}^p \mathsf{h}_j(\mathsf{x}_j),\\
& \boldsymbol{\mathsf{g}}\colon \boldsymbol{\mathsf{v}}\mapsto \sum_{k=1}^q \mathsf{g}_k(\mathsf{v}_k), \qquad 
\boldsymbol{\mathsf{l}}\colon \boldsymbol{\mathsf{v}}\mapsto \sum_{k=1}^q \mathsf{l}_k(\mathsf{v}_k).
\end{align}
According to \cite[Proposition 16.8]{Bauschke_H_2011_book_con_amo},
\eqref{e:qualif} can be reexpressed more concisely as
\begin{equation}\label{e:qualifbis} 
\boldsymbol{\mathsf{0}}\in \partial \boldsymbol{\mathsf{f}}(\overline{\boldsymbol{\mathsf{x}}})+\nabla \boldsymbol{\mathsf{h}}(\overline{\boldsymbol{\mathsf{x}}})
+ \boldsymbol{\mathsf{L}}^* (\partial \boldsymbol{\mathsf{g}}\infconv \partial \boldsymbol{\mathsf{l}})(\boldsymbol{\mathsf{L}}\overline{\boldsymbol{\mathsf{x}}}).
\end{equation}
Since $\dom \boldsymbol{\mathsf{h}} = \HHH$, $\partial\boldsymbol{\mathsf{f}}+\nabla \boldsymbol{\mathsf{h}}
= \partial(\boldsymbol{\mathsf{f}}+\boldsymbol{\mathsf{h}})$ \cite[Propositions~16.38 \& 17.26]{Bauschke_H_2011_book_con_amo}
and since $\dom \boldsymbol{\mathsf{l}}^* = \GGG$, $\partial \boldsymbol{\mathsf{g}}\infconv \partial \boldsymbol{\mathsf{l}}
= \partial(\boldsymbol{\mathsf{g}}\infconv \boldsymbol{\mathsf{l}})$ \cite[Proposition~24.27]{Bauschke_H_2011_book_con_amo}.
Equation \eqref{e:qualifbis}  implies that $\boldsymbol{\mathsf{L}}\big(\dom(\boldsymbol{\mathsf{f}}+\boldsymbol{\mathsf{h}})\big) \cap \dom(\boldsymbol{\mathsf{g}}\infconv \boldsymbol{\mathsf{l}})\neq~\emp$
\cite[Proposition 16.3(i)]{Bauschke_H_2011_book_con_amo} and it follows from \cite[Proposition 16.5]{Bauschke_H_2011_book_con_amo} that
\begin{equation}
(\forall \boldsymbol{\mathsf{x}}\in \HHH)\qquad
\partial \boldsymbol{\mathsf{f}}(\boldsymbol{\mathsf{x}})+\nabla \boldsymbol{\mathsf{h}}(\boldsymbol{\mathsf{x}})
+ \boldsymbol{\mathsf{L}}^* (\partial \boldsymbol{\mathsf{g}}\infconv \partial \boldsymbol{\mathsf{l}})(\boldsymbol{\mathsf{L}}\boldsymbol{\mathsf{x}})
\subset \partial\big(\boldsymbol{\mathsf{f}}+\boldsymbol{\mathsf{h}}+(\boldsymbol{\mathsf{g}}\infconv \boldsymbol{\mathsf{l}})\circ \boldsymbol{\mathsf{L}}\big)(\boldsymbol{\mathsf{x}}).
\end{equation}
As a consequence of \eqref{e:primmon} and Fermat's rule \cite[Theorem 16.2]{Bauschke_H_2011_book_con_amo}, this allows us to conclude that 
\begin{equation}
\boldsymbol{\mathsf F} = \zer\big(\partial \boldsymbol{\mathsf{f}}+\nabla \boldsymbol{\mathsf{h}}
+ \boldsymbol{\mathsf{L}}^* (\partial \boldsymbol{\mathsf{g}}\infconv \partial \boldsymbol{\mathsf{l}})\boldsymbol{\mathsf{L}}\big)
\subset \zer(\partial\big(\boldsymbol{\mathsf{f}}+\boldsymbol{\mathsf{h}}+(\boldsymbol{\mathsf{g}}\infconv \boldsymbol{\mathsf{l}})\circ \boldsymbol{\mathsf{L}}\big)) =
\widetilde{\boldsymbol{\mathsf F}}.
\end{equation}
By a similar argument, the fact that $\boldsymbol{\mathsf F}^*=\zer\big(-\boldsymbol{\mathsf{L}}(\partial \boldsymbol{\mathsf{f}}^*\infconv \partial\boldsymbol{\mathsf{h}}^*)(-\boldsymbol{\mathsf{L}}^*)
+ \partial \boldsymbol{\mathsf{g}}^*+ \nabla \boldsymbol{\mathsf{l}}^*\big)\neq \emp$ allows us to show that $\boldsymbol{\mathsf F}^* \subset \widetilde{\boldsymbol{\mathsf F}}^*$.
\end{proof}

In a quite similar way, Proposition \ref{p:algopdmon2} leads to the following result.
\begin{proposition}
Let $\boldsymbol{\mathsf{W}}$ and $\boldsymbol{\mathsf{U}}$ be defined as in Proposition \ref{p:algopdmon1}.
Let $\mu$ and $\nu$ be defined as in Proposition \ref{p:algopdopt1}.
Suppose that Condition \eqref{e:condvartheta2} holds. 
Let $(\lambda_n)_{n\in\NN}$ be a sequence in $\left]0,1\right]$ such that 
$\inf_{n\in\NN}\lambda_n>0$,
let $\boldsymbol{x}_0$ and 
$(\boldsymbol{c}_n)_{n\in\NN}$ be $\HHH$-valued random variables,
let $\boldsymbol{v}_0$, 
$(\boldsymbol{b}_n)_{n\in\NN}$, and 
$(\boldsymbol{d}_n)_{n\in\NN}$ be $\GGG$-valued random 
variables, and let $(\boldsymbol{\varepsilon}_n)_{n\in\NN}$ be identically distributed 
$\mathbb{D}_{p+q}$-valued random variables. 
Iterate
\begin{equation}\label{e:PDcoordopt2}
\begin{array}{l}
\text{for}\;n=0,1,\ldots\\
\left\lfloor
\begin{array}{l}
\text{for}\;j=1,\ldots,p\\
\left\lfloor
\begin{array}{l}
\eta_{j,n} = \max\menge{\varepsilon_{p+k,n}}{k\in \mathbb{L}_j^*}\\
\displaystyle w_{j,n} =
\eta_{j,n}\big(x_{j,n}-\mathsf{W}_j(\nabla\mathsf{h}_j(x_{j,n}) +c_{j,n})\big)
\end{array}
\right.\\
\text{for}\;k=1,\ldots,q\\
\left\lfloor
\begin{array}{l}
\displaystyle u_{k,n} = \varepsilon_{p+k,n}\Big(\prox_{\mathsf{g}_k^*}^{\mathsf{U}_k^{-1}}\big(v_{k,n}+\mathsf{U}_k(\sum_{j \in \mathbb{L}_k}\mathsf{L}_{k,j} (w_{j,n}-
\mathsf{W}_j\sum_{k'\in \mathbb{L}_j^*} \mathsf{L}_{k',j}^* v_{k',n})- \nabla\mathsf{l}_k^*(v_{k,n})+d_{k,n})\big)+b_{k,n}\Big)\\
v_{k,n+1} = v_{k,n}+\lambda_n \varepsilon_{p+k,n} (u_{k,n}-v_{k,n})
\end{array}
\right.\\
\text{for}\;j=1,\ldots,p\\
\left\lfloor
\begin{array}{l}
\displaystyle  x_{j,n+1} = x_{j,n}+\lambda_n \varepsilon_{j,n}  \Big(w_{j,n}-\mathsf{W}_j \sum_{k\in \mathbb{L}_j^*} \mathsf{L}^*_{k,j} u_{k,n}- x_{j,n}\Big).
\end{array}
\right.\vspace*{0.1cm}
\end{array}
\right.
\end{array}
\end{equation}
In addition, assume that Conditions \ref{c:PDcoord2i} in Proposition \ref{p:algopdmon2}, and
\ref{c:PDcoord1iisym}-\ref{c:PDcoord1iiisym} in Proposition \ref{p:algopdmon1sym} hold, where
$(\forall n\in\NN)$ $\EEE_n=\sigma(\boldsymbol{\varepsilon}_n)$ and $\boldsymbol{\XX}_n=
\sigma(\boldsymbol{x}_{n'},\boldsymbol{v}_{n'})_{0\leq n'\leq n}$.\\
If, in Problem~\ref{prob:mainfunc}, $(\forall j \in \{1,\ldots,p\})$ $\mathsf{f}_j = 0$,
then $(\boldsymbol{x}_n)_{n\in\NN}$ converges weakly $\as$ to a
$\widetilde{\boldsymbol{\mathsf{F}}}$-valued random variable, and
$(\boldsymbol{v}_n)_{n\in\NN}$ 
converges weakly $\as$ to a
$\widetilde{\boldsymbol{\mathsf{F}}}^*$-valued random variable.
\end{proposition}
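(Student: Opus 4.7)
The plan is to mirror the proof of Proposition \ref{p:algopdopt1}, only substituting the variational data into the monotone framework of Proposition \ref{p:algopdmon2} (rather than Proposition \ref{p:algopdmon1}). First, I would set, for every $j\in\{1,\ldots,p\}$, $\mathsf{A}_j=\partial \mathsf{f}_j=0$ (since $\mathsf{f}_j=0$) and $\mathsf{C}_j=\nabla \mathsf{h}_j$, and for every $k\in\{1,\ldots,q\}$, $\mathsf{B}_k=\partial \mathsf{g}_k$ and $\mathsf{D}_k^{-1}=\nabla \mathsf{l}_k^*$. With these identifications, $\mathsf{J}_{\mathsf{U}_k\mathsf{B}_k^{-1}}=\prox_{\mathsf{g}_k^*}^{\mathsf{U}_k^{-1}}$, so that Algorithm \eqref{e:PDcoordopt2} is literally Algorithm \eqref{e:PDcoord2} applied to this data (in particular, with $\mathsf{A}_j=0$ the update $w_{j,n}$ reduces to a pure forward step, as required).

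Next I would verify the cocoercivity hypotheses of Proposition \ref{p:algopdmon2}. By the Baillon--Haddad theorem (\cite[Corollaries~16.42 \& 18.16]{Bauschke_H_2011_book_con_amo}), the Lipschitz continuity of $\nabla(\mathsf{h}_j \circ \mathsf{W}_j^{1/2})$ with constant $\mu_j^{-1}$ is equivalent to $\mu_j$-cocoercivity of $\mathsf{W}_j^{1/2} \mathsf{C}_j \mathsf{W}_j^{1/2}$, and likewise $\nu_k^{-1}$-Lipschitzianity of $\nabla(\mathsf{l}_k^* \circ \mathsf{U}_k^{1/2})$ gives $\nu_k$-cocoercivity of $\mathsf{U}_k^{1/2} \mathsf{D}_k^{-1} \mathsf{U}_k^{1/2}$. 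Since the qualification condition \eqref{e:qualif} guarantees that the set $\boldsymbol{\mathsf{F}}$ associated with the induced monotone inclusion \eqref{e:primmon} is nonempty (and analogously $\boldsymbol{\mathsf{F}}^*\neq\emp$), and since assumption \eqref{e:condvartheta2} together with Conditions \ref{c:PDcoord2i}, \ref{c:PDcoord1iisym}, and \ref{c:PDcoord1iiisym} are exactly what Proposition \ref{p:algopdmon2} requires, I can invoke that proposition to conclude that $(\boldsymbol{x}_n)_{n\in\NN}$ converges weakly $\as$ to an $\boldsymbol{\mathsf{F}}$-valued random variable and $(\boldsymbol{v}_n)_{n\in\NN}$ to an $\boldsymbol{\mathsf{F}}^*$-valued random variable.

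It remains to promote these monotone-inclusion limits to variational limits, i.e.\ to show $\boldsymbol{\mathsf{F}} \subset \widetilde{\boldsymbol{\mathsf{F}}}$ and $\boldsymbol{\mathsf{F}}^* \subset \widetilde{\boldsymbol{\mathsf{F}}}^*$. This is exactly the subdifferential-calculus argument used at the end of the proof of Proposition \ref{p:algopdopt1}: introducing the separable functions $\boldsymbol{\mathsf h}\colon \boldsymbol{\mathsf x}\mapsto \sum_j \mathsf{h}_j(\mathsf{x}_j)$ (so $\dom \boldsymbol{\mathsf h}=\HHH$) and $\boldsymbol{\mathsf l}\colon \boldsymbol{\mathsf v}\mapsto \sum_k \mathsf{l}_k(\mathsf{v}_k)$ (so $\dom \boldsymbol{\mathsf l}^*=\GGG$), one has $\nabla \boldsymbol{\mathsf h}=\partial \boldsymbol{\mathsf h}$ and $\partial \boldsymbol{\mathsf g}\infconv \partial \boldsymbol{\mathsf l}=\partial(\boldsymbol{\mathsf g}\infconv \boldsymbol{\mathsf l})$ by \cite[Propositions~16.38, 17.26, 24.27]{Bauschke_H_2011_book_con_amo}. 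Combined with \eqref{e:qualif} and \cite[Proposition~16.5]{Bauschke_H_2011_book_con_amo}, the subdifferential sum rule yields the required inclusion of zero sets, which via Fermat's rule translates into membership in $\widetilde{\boldsymbol{\mathsf F}}$ and $\widetilde{\boldsymbol{\mathsf F}}^*$.

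No genuine obstacle is expected: all the ingredients---the correspondence of Algorithm \eqref{e:PDcoordopt2} with Algorithm \eqref{e:PDcoord2}, the Baillon--Haddad translation of smoothness into cocoercivity, and the subdifferential sum rule---have already been used in Proposition \ref{p:algopdopt1} and are simply reassembled here under the extra structural hypothesis $\mathsf{f}_j=0$, which is precisely what permits the use of the ``diagonal'' preconditioner of Lemma \ref{le:2} instead of the full block metric of Lemma \ref{le:1}.
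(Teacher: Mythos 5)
Your proposal is correct and follows exactly the route the paper intends: the paper omits the proof of this proposition, stating only that it follows from Proposition \ref{p:algopdmon2} ``in a quite similar way'' to how Proposition \ref{p:algopdopt1} follows from Proposition \ref{p:algopdmon1}, and your argument is precisely that reconstruction (identification $\mathsf{A}_j=\partial\mathsf{f}_j=0$, $\mathsf{C}_j=\nabla\mathsf{h}_j$, $\mathsf{B}_k=\partial\mathsf{g}_k$, $\mathsf{D}_k^{-1}=\nabla\mathsf{l}_k^*$, Baillon--Haddad to convert the Lipschitz hypotheses into the cocoercivity hypotheses, and the subdifferential sum-rule argument to pass from $\boldsymbol{\mathsf{F}}\subset\widetilde{\boldsymbol{\mathsf{F}}}$ and $\boldsymbol{\mathsf{F}}^*\subset\widetilde{\boldsymbol{\mathsf{F}}}^*$). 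No gaps.
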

At this point, it may appear interesting to examine the connections existing between the two proposed block-coordinate proximal
algorithms and published works.
\newpage
\begin{remark}\ 
\begin{enumerate}
\item In practice, one may be interested in problems of the form
\begin{equation}
\minimize{\mathsf{x}_1\in\HH_1,\ldots,\mathsf{x}_p\in\HH_p}
{\sum_{j=1}^p\big(\mathsf{f}_j(\mathsf{x}_j)+\mathsf{h}_j(\mathsf{x}_j)\big)+
\sum_{k=1}^q \mathsf{g}_k\bigg(\sum_{j=1}^p\mathsf{L}_{k,j}\mathsf{x}_{j}\bigg)}.
\end{equation}
These are special cases of  \eqref{e:primopt} where $(\forall k \in\{1,\ldots,q\})$ $\mathsf{l}_k = \iota_{\{0\}}$, i.e. $\mathsf{l}_k^* = 0$.
\item Algorithm \eqref{e:PDcoordopt1} extends the deterministic approaches in 
\cite{Chambolle_A_2010_first_opdacpai,Condat_L_2013_j-ota-primal-dsm,Esser_E_2010_j-siam-is_gen_fcf,He_B_2012_j-siam-is_conv_apd,Vu_B_2013_j-acm_spl_adm}, which deal with the case when $p=1$,
by introducing some random sweeping of the coordinates and by allowing the use of stochastic errors.
Similarly, Algorithm \eqref{e:PDcoordopt2} extends the algorithms in \cite{Chen_P_2013_j-inv-prob_prim_dfp,Loris_I_2011_generalization_ist} which were developed in a deterministic setting
in the absence of errors, in the scenario where $p = q = 1$, 
$\HH_1$ and $\GG_1$ are finite dimensional spaces,
$\mathsf{l}_1 = \iota_{\{0\}}$, $\mathsf{W}_1 = \tau \Id$ with $\tau \in \RPP$,
$\mathsf{U}_1 = \rho \Id$ with $\rho \in \RPP$, and
no relaxation ($\lambda_n \equiv 1$) or a constant one ($\lambda_n \equiv \lambda_0 < 1$) is performed.
Recently, these works have been generalized to possibly infinite-dimensional Hilbert spaces when $p=1$ and $q > 1$, arbitrary preconditioning operators
are employed, and deterministic summable errors are allowed \cite{Combettes_P_2014_p-icip_forward_bvo}.
The practical interest of introducing preconditioning operators for accelerating the convergence of primal-dual
proximal methods was emphasized in \cite{Combettes_P_2014_p-icip_forward_bvo,Pock_T_2008_p-iccv_diagonal_pffo,Repetti_A_2012_p-eusipco_penalized_wlsardcsdn}.
\item In \cite[Corollary 5.5]{Combettes_P_2014_stochastic_qfbc}, another random block-coordinate primal-dual algorithm was proposed to solve an instance of Problem \ref{prob:mainfunc}
obtained when $(\forall j \in \{1,\ldots,p\})$ $\mathsf{h}_j = \mathsf{0}$ and $(\forall k \in \{1,\ldots,q\})$
$\mathsf{l}_k = \iota_{\{\mathsf{0}\}}$. This algorithm is based on the Douglas-Rachford iteration which is also at the origin of
the randomized Alternating Direction Method of Multipliers (ADMM) developed in finite dimensional spaces in \cite{Iutzeler_F_2013_cdc_asynchronous_dora}.
Note however that the algorithm in \cite[Corollary 5.5]{Combettes_P_2014_stochastic_qfbc} requires to invert $\ID+\boldsymbol{\mathsf{L}}\boldsymbol{\mathsf{L}}^*$
or $\ID+\boldsymbol{\mathsf{L}}^*\boldsymbol{\mathsf{L}}$ (see \cite[Remark 5.4]{Combettes_P_2014_stochastic_qfbc}). By contrast, Algorithms \eqref{e:PDcoordopt1}
and \eqref{e:PDcoordopt2} do not make it necessary to perform any linear operator inversion.
\end{enumerate}
\end{remark}

\section{Asynchronous distributed algorithms}\label{se:dist}
In this part, $\HH$, $\GG_1,\ldots,\GG_m$ are separable real Hilbert spaces, $\GGG = \GG_1\oplus \cdots \oplus \GG_m$,
and the following problem is addressed:
\begin{problem}\label{pr:distmon}
For every  $i\in\{1,\ldots,m\}$, let 
$\mathsf{A}_i\colon\HH\to 2^{\HH}$ be maximally monotone,
let $\mathsf{C}_i\colon\HH\to \HH$ be cocoercive, let 
$\mathsf{B}_i\colon\GG_i\to 2^{\GG_i}$ be maximally monotone, 
let $\mathsf{D}_i\colon\GG_i\to 2^{\GG_i}$ be maximally monotone and strongly monotone,
and let $\mathsf{M}_{i}$ be a nonzero operator in $\BL(\HH,\GG_i)$. 
We assume that the set $\widehat{\mathsf{F}}$ of solutions to the problem:
\newpage
\begin{equation}
\label{e:prdist}
\text{find}\;\;\mathsf{x}\in\HH
\;\;\text{such that}\;\;0\in
\sum_{i=1}^m \mathsf{A}_i\mathsf{x}+\mathsf{C}_i\mathsf{x}+\mathsf{M}_{i}^*(\mathsf{B}_i\infconv\mathsf{D}_i)
(\mathsf{M}_{i}\mathsf{x})
\end{equation}
is nonempty. 
Our objective is to find a $\widehat{\mathsf{F}}$-valued random variable $\widehat{x}$.
\end{problem}
Problem \eqref{e:prdist} can be reformulated in the product space $\HH^m$ 
as
\begin{equation}\label{e:formparallel}
\mbox{find $(\mathsf{x}_1,\ldots,\mathsf{x}_m)\in \Lambda_m$ such that}\;\; 0 \in 
\sum_{i=1}^m \mathsf{A}_i\mathsf{x}_i+\mathsf{C}_i\mathsf{x}_i+\mathsf{M}_{i}^*(\mathsf{B}_i\infconv\mathsf{D}_i)
(\mathsf{M}_{i}\mathsf{x}_i)
\end{equation}
where
\begin{equation}
\Lambda_m = \menge{(\mathsf{x}_1,\ldots,\mathsf{x}_m)\in \HH^m}{\mathsf{x}_1 = \ldots = \mathsf{x}_m}.
\end{equation}
This kind of reformulation was employed in \cite{Combettes_PL_2008_j-ip_proximal_apdmfscvip,Pesquet_J_2012_j-pjpjoo_par_ipo} to obtain parallel algorithms
for finding a zero of a sum of maximal operators and it is also popular in consensus problems \cite{Boyd_S_2011_j-found-tml_distributed_osl_admm,Nedic_A_2010_inbook_cooperative_mao}.
To devise distributed algorithms, the involved linear constraint is further split in a set of
similar constraints, each of them involving a reduced subset of variables. In this context indeed, each index
$i\in \{1,\ldots,m\}$ corresponds to a given agent and a modeling of the topological relationships existing between the different
agents is needed. To do so, we define nonempty
subsets $(\mathbb{V}_\ell)_{1\le \ell \le r}$ of $\{1,\ldots,m\}$, with cardinalities 
$(\kappa_\ell)_{1 \le \ell \le r}$, which are such that:
\begin{assumption}\label{a:distrib}
For every $\boldsymbol{\mathsf{x}}=(\mathsf{x}_i)_{1\le i \le m} \in \HH^m$,
\begin{equation}
\boldsymbol{\mathsf{x}} \in \Lambda_m \qquad \Leftrightarrow \qquad (\forall \ell \in \{1,\ldots,r\})\quad (\mathsf{x}_i)_{i \in \mathbb{V}_\ell} \in \Lambda_{\kappa_{\ell}}.
\end{equation}
\end{assumption}
This assumption is obviously satisfied if $r = 1$ and $\mathbb{V}_1 = \{1,\ldots,m\}$, or
if $r=m-1$ and $(\forall \ell \in \{1,\ldots,m-1\})$ $\mathbb{V}_\ell = \{\ell,\ell+1\}$.
More generally if the sets $(\mathbb{V}_\ell)_{1\le \ell \le r}$ correspond to
the hyperedges of a hypergraph with vertices $\{1,\ldots,m\}$, then the assumption is equivalent to the fact
that the hypergraph is connected.

In the following, we will need to introduce the notation: 
\begin{align}
&\HHH = \HH^{\kappa_1} \oplus \cdots \oplus \HH^{\kappa_r}, \qquad\qquad\qquad\;\; \boldsymbol{\Lambda} = \Lambda_{\kappa_1}\oplus \cdots \oplus \Lambda_{\kappa_r},\\
&\boldsymbol{\mathsf A}=\cart_{\!i=1}^{\!m}\mathsf{A}_i, \qquad\qquad\qquad\qquad\qquad
\boldsymbol{\mathsf C}=\cart_{\!i=1}^{\!m}\mathsf{C}_i,\\
&\boldsymbol{\mathsf B}=\cart_{\!i=1}^{\!m}\mathsf{B}_i,\qquad\qquad\qquad\qquad\qquad
\boldsymbol{\mathsf D}=\cart_{\!i=1}^{\!m}\mathsf{D}_i,\\
&\boldsymbol{\mathsf{S}}\colon \HH^m\to \HHH\colon \boldsymbol{\mathsf{x}}\mapsto (\mathsf{S}_\ell \boldsymbol{\mathsf{x}})_{1\le \ell \le r},\qquad\quad
\boldsymbol{\mathsf{M}}\colon \HH^m \to \GGG\colon \boldsymbol{\mathsf{x}}\mapsto (\mathsf{M}_i\mathsf{x}_i)_{1\le i \le m},
\end{align}
where, for every $\ell \in \{1,\ldots,r\}$,
\begin{equation}\label{e:defSell}
\mathsf{S}_\ell \colon \HH^m \to \HH^{\kappa_\ell}\colon \boldsymbol{\mathsf{x}}\mapsto (\mathsf{x}_i)_{i\in \mathbb{V}_\ell}
=  (\mathsf{x}_{\mathsf{i}(\ell,j)})_{1\le j \le \kappa_\ell}
\end{equation}
and $\mathsf{i}(\ell,1),\ldots,\mathsf{i}(\ell,\kappa_\ell)$ denote the elements of $\mathbb{V}_\ell$ ordered in an increasing manner.
Note that, for every $\ell \in \{1,\ldots,r\}$, the adjoint of $\mathsf{S}_\ell$ is
\begin{equation}\label{e:Ladjdis}
\mathsf{S}_\ell^* \colon \HH^{\kappa_\ell}\to \HH^m\colon \mathsf{z}_\ell = (\mathsf{z}_{\ell,j})_{1\le j \le \kappa_\ell} \mapsto (\mathsf{x}_i)_{1\le i \le m}
\end{equation}
where
\begin{equation}\label{e:Ladjdisb}
(\forall i \in \{1,\ldots,m\})\qquad
\mathsf{x}_i = 
\begin{cases}
\mathsf{z}_{\ell,j} & \mbox{if $i = \mathsf{i}(\ell,j)$ with $j\in \{1,\ldots,\kappa_\ell\}$}\\
0 & \mbox{otherwise.}
\end{cases}
\end{equation}
The adjoint of $\boldsymbol{\mathsf{S}}$ is thus given by
\begin{equation}\label{e:defSs1}
\boldsymbol{\mathsf{S}}^*\colon \HHH\to \HH^m\colon (\mathsf{z}_\ell)_{1\le \ell \le r}\mapsto \sum_{\ell=1}^r \mathsf{S}_\ell^* \mathsf{z}_\ell = (\mathsf{x}_i)_{1\le i \le m}
\end{equation}
where, for every $i \in \{1,\ldots,m\}$,
\begin{equation}\label{e:defSs2}
\mathsf{x}_i = \sum_{(\ell,j) \in \mathbb{V}_i^*} \mathsf{z}_{\ell,j}
\end{equation}
with 
\begin{equation}
\mathbb{V}_i^* = \menge{(\ell,j)}{\ell\in \{1,\ldots,r\}, j\in \{1,\ldots,\kappa_\ell\}, \text{and}\;\,\mathsf{i}(\ell,j) = i}.
\end{equation}
As a consequence of Assumption~\ref{a:distrib}, the cardinality of $\mathbb{V}_i^*$ (i.e. the number of sets  $(\mathbb{V}_\ell)_{1\le \ell\le r}$ containing index $i$)
is nonzero.

The link between Problems \ref{pr:distmon} and \ref{prob:main} is now 
enlightened by the next result:
\begin{proposition}\label{co:formparallelvectdis}
Under Assumption \ref{a:distrib}, Problem \eqref{e:formparallel} is equivalent to
\begin{equation}\label{e:formparallelvect}
\text{find}\;\;\boldsymbol{\mathsf{x}}\in\HH^m
\;\;\text{such that}\;\;\boldsymbol{\mathsf{0}} \in  \boldsymbol{\mathsf{A}}\boldsymbol{\mathsf{x}}+\boldsymbol{\mathsf{C}}\boldsymbol{\mathsf{x}}+\boldsymbol{\mathsf{M}}^*(\boldsymbol{\mathsf{B}}\infconv\boldsymbol{\mathsf{D}})(\boldsymbol{\mathsf{M}}\boldsymbol{\mathsf{x}})+
\boldsymbol{\mathsf{S}}^*\boldsymbol{\mathsf{N}}_{\boldsymbol{\Lambda}}(\boldsymbol{\mathsf{S}}\boldsymbol{\mathsf{x}}).
\end{equation}
\end{proposition}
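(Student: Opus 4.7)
The plan is to reformulate \eqref{e:formparallel} as a monotone inclusion in $\HH^m$ featuring the normal cone to the diagonal $\Lambda_m$, and then to express that normal cone through $\boldsymbol{\mathsf{S}}$ by means of Assumption~\ref{a:distrib}.

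First I would observe that $\Lambda_m$ is a closed linear subspace of $\HH^m$ whose orthogonal complement is $\Lambda_m^\perp = \big\{(\mathsf{u}_i)_{1\le i\le m}\in\HH^m : \sum_{i=1}^m \mathsf{u}_i=0\big\}$. Hence $\mathsf{N}_{\Lambda_m}(\boldsymbol{\mathsf{x}})=\Lambda_m^\perp$ when $\boldsymbol{\mathsf{x}}\in\Lambda_m$ and $\mathsf{N}_{\Lambda_m}(\boldsymbol{\mathsf{x}})=\emp$ otherwise. The statement of \eqref{e:formparallel}, which demands $\boldsymbol{\mathsf{x}}\in\Lambda_m$ together with the existence of some $\boldsymbol{\mathsf{u}}\in\boldsymbol{\mathsf{A}}\boldsymbol{\mathsf{x}}+\boldsymbol{\mathsf{C}}\boldsymbol{\mathsf{x}}+\boldsymbol{\mathsf{M}}^*(\boldsymbol{\mathsf{B}}\infconv\boldsymbol{\mathsf{D}})(\boldsymbol{\mathsf{M}}\boldsymbol{\mathsf{x}})$ whose components sum to zero, is thus equivalent to
\begin{equation*}
\boldsymbol{\mathsf{0}}\in \boldsymbol{\mathsf{A}}\boldsymbol{\mathsf{x}}+\boldsymbol{\mathsf{C}}\boldsymbol{\mathsf{x}}+\boldsymbol{\mathsf{M}}^*(\boldsymbol{\mathsf{B}}\infconv\boldsymbol{\mathsf{D}})(\boldsymbol{\mathsf{M}}\boldsymbol{\mathsf{x}})+\mathsf{N}_{\Lambda_m}(\boldsymbol{\mathsf{x}}).
\end{equation*}

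Next I would prove the normal-cone chain rule $\mathsf{N}_{\Lambda_m}(\boldsymbol{\mathsf{x}}) = \boldsymbol{\mathsf{S}}^*\mathsf{N}_{\boldsymbol{\Lambda}}(\boldsymbol{\mathsf{S}}\boldsymbol{\mathsf{x}})$ for every $\boldsymbol{\mathsf{x}}\in\HH^m$. Assumption~\ref{a:distrib} states precisely that $\iota_{\Lambda_m}=\iota_{\boldsymbol{\Lambda}}\circ\boldsymbol{\mathsf{S}}$, so both sides of the claimed identity are empty unless $\boldsymbol{\mathsf{x}}\in\Lambda_m$; in that case they reduce to $\Lambda_m^\perp$ and $\boldsymbol{\mathsf{S}}^*\boldsymbol{\Lambda}^\perp$, where $\boldsymbol{\Lambda}^\perp=\bigoplus_{\ell=1}^{r}\big\{(\mathsf{w}_{\ell,j})_{1\le j\le\kappa_\ell}: \sum_{j=1}^{\kappa_\ell}\mathsf{w}_{\ell,j}=0\big\}$. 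The inclusion $\boldsymbol{\mathsf{S}}^*\boldsymbol{\Lambda}^\perp\subset \Lambda_m^\perp$ is then immediate from \eqref{e:defSs1}--\eqref{e:defSs2}: for $\boldsymbol{\mathsf{w}}=(\mathsf{w}_\ell)_\ell\in\boldsymbol{\Lambda}^\perp$, one has $\sum_{i=1}^m(\boldsymbol{\mathsf{S}}^*\boldsymbol{\mathsf{w}})_i=\sum_{\ell=1}^r\sum_{j=1}^{\kappa_\ell}\mathsf{w}_{\ell,j}=0$.

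The hard part is the reverse inclusion $\Lambda_m^\perp\subset\boldsymbol{\mathsf{S}}^*\boldsymbol{\Lambda}^\perp$, where the connectedness of the hypergraph with vertex set $\{1,\ldots,m\}$ and hyperedges $(\mathbb{V}_\ell)_{1\le\ell\le r}$—which is exactly what Assumption~\ref{a:distrib} encodes—is genuinely needed. Given $\boldsymbol{\mathsf{u}}\in\Lambda_m^\perp$, I would construct the preimage $\boldsymbol{\mathsf{w}}\in\boldsymbol{\Lambda}^\perp$ by fixing a spanning tree of the bipartite incidence graph with vertex classes $\{1,\ldots,m\}$ and $\{1,\ldots,r\}$, declaring $\mathsf{w}_{\ell,j}=0$ on every non-tree edge, and determining the remaining $\mathsf{w}_{\ell,j}$'s inductively from the leaves toward a chosen root: at each newly visited vertex (agent or hyperedge) the corresponding linear constraint (either $\sum_{(\ell,j)\in\mathbb{V}_i^*}\mathsf{w}_{\ell,j}=\mathsf{u}_i$ or $\sum_j \mathsf{w}_{\ell,j}=0$) involves a single unresolved incident unknown and hence fixes its value. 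The balance $\sum_i\mathsf{u}_i=0$ ensures that the residual constraint at the root is automatically consistent, so the construction is well defined and produces $\boldsymbol{\mathsf{w}}\in\boldsymbol{\Lambda}^\perp$ with $\boldsymbol{\mathsf{S}}^*\boldsymbol{\mathsf{w}}=\boldsymbol{\mathsf{u}}$. Combining this identity with the reformulation above yields \eqref{e:formparallelvect}.
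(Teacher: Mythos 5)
Your proposal is correct, and its overall skeleton coincides with the paper's: both first rewrite \eqref{e:formparallel} as $\boldsymbol{\mathsf{0}}\in\boldsymbol{\mathsf{A}}\boldsymbol{\mathsf{x}}+\boldsymbol{\mathsf{C}}\boldsymbol{\mathsf{x}}+\boldsymbol{\mathsf{M}}^*(\boldsymbol{\mathsf{B}}\infconv\boldsymbol{\mathsf{D}})(\boldsymbol{\mathsf{M}}\boldsymbol{\mathsf{x}})+\mathsf{N}_{\Lambda_m}(\boldsymbol{\mathsf{x}})$ via the observation that $\mathsf{N}_{\Lambda_m}(\boldsymbol{\mathsf{x}})=\Lambda_m^\perp=\menge{(\mathsf{u}_i)_{1\le i\le m}}{\sum_{i=1}^m\mathsf{u}_i=0}$ on $\Lambda_m$, and then reduce everything to the identity $\mathsf{N}_{\Lambda_m}=\boldsymbol{\mathsf{S}}^*\mathsf{N}_{\boldsymbol{\Lambda}}\boldsymbol{\mathsf{S}}$. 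Where you genuinely diverge is in how that identity is justified. The paper treats it as an instance of the subdifferential chain rule, $\partial(\iota_{\boldsymbol{\Lambda}}\circ\boldsymbol{\mathsf{S}})=\boldsymbol{\mathsf{S}}^*\,\partial\iota_{\boldsymbol{\Lambda}}\,\boldsymbol{\mathsf{S}}$, invoking the qualification condition that $\boldsymbol{\Lambda}+\ran(\boldsymbol{\mathsf{S}})$ is a closed subspace of $\HHH$ and citing the corresponding results in Bauschke--Combettes; this is short but leaves the exact surjectivity of $\boldsymbol{\mathsf{S}}^*$ from $\boldsymbol{\Lambda}^\perp$ onto $\Lambda_m^\perp$ implicit in the cited machinery. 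You instead prove $\Lambda_m^\perp=\boldsymbol{\mathsf{S}}^*\boldsymbol{\Lambda}^\perp$ by hand: the easy inclusion by summing components, and the hard one by an explicit spanning-tree elimination on the bipartite agent--hyperedge incidence graph, using the single linear dependency created by $\sum_i\mathsf{u}_i=0$ to absorb the residual constraint at the root. Your construction is sound (the constraints are $\HH$-valued with integer coefficients, so the tree argument applies verbatim), it yields exact equality rather than density without any closed-range discussion, and it makes transparent precisely where the connectedness encoded in Assumption \ref{a:distrib} (together with $\mathbb{V}_i^*\neq\emp$ for every $i$) is used. The price is length and the need to set up the combinatorial bookkeeping; the paper's citation-based route is more economical but less self-contained.
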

\begin{proof}
For every $\boldsymbol{\mathsf{x}}\in \HH^m$, we have the following simple equivalences:
\begin{align}
&\begin{cases}
\displaystyle 0 \in  \sum_{i=1}^m \mathsf{A}_i\mathsf{x}_i+\mathsf{C}_i\mathsf{x}_i+\mathsf{M}_{i}^*(\mathsf{B}_i\infconv\mathsf{D}_i)(\mathsf{M}_{i}\mathsf{x}_i)\\
\boldsymbol{\mathsf{x}} \in \Lambda_m
\end{cases}\nonumber\\
\Leftrightarrow\quad &\begin{cases}
(\forall i \in \{1,\ldots,m\})\quad 0 \in  \mathsf{A}_i\mathsf{x}_i+\mathsf{C}_i\mathsf{x}_i+\mathsf{M}_{i}^*(\mathsf{B}_i\infconv\mathsf{D}_i)(\mathsf{M}_{i}\mathsf{x}_i)+\mathsf{u}_i\\
\boldsymbol{\mathsf{x}} \in \Lambda_m\\
\displaystyle \sum_{i=1}^m \mathsf{u}_i = 0
\end{cases}\nonumber\\
\Leftrightarrow\quad &\begin{cases}
\boldsymbol{\mathsf{0}} \in  \boldsymbol{\mathsf{A}}\boldsymbol{\mathsf{x}}+\boldsymbol{\mathsf{C}}\boldsymbol{\mathsf{x}}+\boldsymbol{\mathsf{M}}^*(\boldsymbol{\mathsf{B}}\infconv\boldsymbol{\mathsf{D}})(\boldsymbol{\mathsf{M}}\boldsymbol{\mathsf{x}})+\boldsymbol{\mathsf{u}}\\
\boldsymbol{\mathsf{x}} \in \Lambda_m\\
\boldsymbol{\mathsf{u}} \in \Lambda_m^\perp
\end{cases}\nonumber\\
\Leftrightarrow\quad  & \boldsymbol{\mathsf{0}} \in  \boldsymbol{\mathsf{A}}\boldsymbol{\mathsf{x}}+\boldsymbol{\mathsf{C}}\boldsymbol{\mathsf{x}}+\boldsymbol{\mathsf{M}}^*(\boldsymbol{\mathsf{B}}\infconv\boldsymbol{\mathsf{D}})(\boldsymbol{\mathsf{M}}\boldsymbol{\mathsf{x}})+
\boldsymbol{\mathsf{S}}^*\boldsymbol{\mathsf{N}}_{\boldsymbol{\Lambda}}(\boldsymbol{\mathsf{S}}\boldsymbol{\mathsf{x}}),
\end{align}
\newpage
\noindent where we have used the fact that $\boldsymbol{\mathsf{N}}_{\Lambda_m} = \partial \iota_{\Lambda_m} = 
\partial(\iota_{\boldsymbol{\Lambda}}\circ \boldsymbol{\mathsf{S}}) = \boldsymbol{\mathsf{S}}^* \partial \iota_{\boldsymbol{\Lambda}} \boldsymbol{\mathsf{S}}
= \boldsymbol{\mathsf{S}}^*\boldsymbol{\mathsf{N}}_{\boldsymbol{\Lambda}} \boldsymbol{\mathsf{S}}$ since
$\boldsymbol{\Lambda} + \ran(\boldsymbol{\mathsf{S}})$ is a closed subspace of $\HHH$ \cite[Propositions 6.19 \& 16.42]{Bauschke_H_2011_book_con_amo}.
\end{proof}

\begin{remark}\label{re:formparallelvectdis}
If we now reexpress \eqref{e:formparallelvect} in terms of the notation used in Problem \ref{prob:main},
we see that the equivalence of Problem \ref{pr:distmon} with  Problem \ref{prob:main} is obtained by setting $p = m$, $q = m+r$, 
$\HH_1=\ldots=\HH_m = \HH$,
$(\forall \ell \in \{1,\ldots,r\})$ $\GG_{m+\ell} = \HH^{\kappa_\ell}$, $\mathsf{B}_{m+\ell} = \mathsf{N}_{\Lambda_{\kappa_\ell}}$,
$\mathsf{D}_{m+\ell} = \mathsf{N}_{\{0\}}$, and
\begin{align}
&\big(\forall (k,i) \in \{1,\ldots,m\}^2)\qquad \mathsf{L}_{k,i} =
\begin{cases}
\mathsf{M}_i & \mbox{if $k=i$}\\
\mathsf{0} & \mbox{otherwise,}
\end{cases}\\
&(\forall \ell \in \{1,\ldots,r\})(\forall \boldsymbol{\mathsf{x}}\in \HH^m)\qquad
\sum_{i=1}^m\mathsf{L}_{m+\ell,i}\,\mathsf{x}_i = \mathsf{S}_\ell \boldsymbol{\mathsf{x}}
\end{align}
(hence, \eqref{e:defLk} and \eqref{e:defLjs} are satisfied).
\end{remark}

Our goal now is to develop asynchronous distributed algorithms
for solving Problem~\ref{pr:distmon} in the sense that,  at each iteration of these algorithms,
a limited number of operators $(\mathsf{A}_i)_{1\le i \le m}$, $(\mathsf{B}_i)_{1\le i \le m}$, $(\mathsf{C}_i)_{1\le i \le m}$,
and $(\mathsf{D}_i)_{1\le i \le m}$ are activated in a random manner.
Based on the above remark, the following convergence result can be deduced from
Proposition~\ref{p:algopdmon1sym}.
\begin{proposition}\label{p:algopdmon1dist}
Let $(\theta_\ell)_{1\le \ell \le r} \in \RPP^r$.
For every $i\in \{1,\ldots,m\}$, let $\mathsf{W}_i$
be a strongly positive self-adjoint operator in $\BL(\HH)$
such that $\mathsf{W}_i^{1/2} \mathsf{C}_i \mathsf{W}_i^{1/2}$
is $\mu_i$-cocoercive with $\mu_i \in \RPP$, let $\mathsf{U}_i$
be a strongly positive self-adjoint operator in $\BL(\GG_i)$
such that $\mathsf{U}_i^{1/2} \mathsf{D}_i^{-1} \mathsf{U}_i^{1/2}$
is $\nu_i$-cocoercive with $\nu_i \in \RPP$, and let
\begin{equation}
\overline{\theta}_i= \sum_{\ell\in \{\ell'\in\{1,\ldots,r\}\mid i\in \mathbb{V}_{\ell'}\}} \theta_\ell.
\end{equation}
Suppose that
\begin{align}
&(\exists \alpha\in \RPP)\qquad  
(1-\chi)
\min\{\mu (1+\alpha \sqrt{\chi})^{-1},
\nu (1+\alpha^{-1} \sqrt{\chi})^{-1}\}> \frac12 \label{e:condvarthetaalphadist}
\end{align}
where 
\begin{equation}\label{e:defchi}
\chi = \max_{i\in\{1,\ldots,m\}}  \|\mathsf{U}_i^{1/2} \mathsf{M}_i \mathsf{W}_i^{1/2}\|^2+  \overline{\theta}_i \|\mathsf{W}_i\| ,
\end{equation}
$\mu = \min\{\mu_1,\ldots,\mu_m\}$, and $\nu=\min\{\nu_1,\ldots,\nu_m\}$.
Let $(\lambda_n)_{n\in\NN}$  be a sequence in $\left]0,1\right]$ such that 
$\inf_{n\in\NN}\lambda_n>0$,
let $\boldsymbol{x}_0$, 
$(\boldsymbol{a}_n)_{n\in\NN}$, and 
$(\boldsymbol{c}_n)_{n\in\NN}$ be $\HH^m$-valued random variables,
let $(v_{i,0})_{1\le i \le m}$, $(\boldsymbol{b}_n)_{n\in\NN}$, and 
$(\boldsymbol{d}_n)_{n\in\NN}$ be $\GGG$-valued random 
variables, for every $\ell \in \{1,\ldots,r\}$ let 
$v_{m+\ell,0} = (v_{m+\ell,j,0})_{1\le j \le \kappa_\ell}$ be a $\HH^{\kappa_\ell}$-valued random variable,
and let $(\boldsymbol{\varepsilon}_n)_{n\in\NN}$ be identically distributed 
$\mathbb{D}_{2m+r}$-valued random variables. 
Set $(\forall \ell \in \{1,\ldots,r\})$ $\overline{x}_{\ell,0} = \kappa_{\ell}^{-1}\sum_{i\in \mathbb{V}_\ell} x_{i,0}\,$, iterate
\begin{equation}\label{e:PDcoord1symdister}
\begin{array}{l}
\text{for}\;n=0,1,\ldots\\
\left\lfloor
\begin{array}{l}
\text{for}\;\ell=1,\ldots,r\\
\left\lfloor
\begin{array}{l}
\displaystyle \overline{u}_{\ell,n} = \varepsilon_{2m+\ell,n}\Big(\frac{1}{\kappa_\ell}
\sum_{j=1}^{\kappa_\ell} v_{m+\ell,j,n}+ \theta_\ell\, \overline{x}_{\ell,n}\Big)\\
\text{for}\;j=1,\ldots,\kappa_\ell\\
\left\lfloor
\begin{array}{l}
\displaystyle w_{\ell,j,n} = \varepsilon_{2m+\ell,n}\big(2(\theta_\ell\, x_{\mathsf{i}(\ell,j),n}-\overline{u}_{\ell,n})+v_{m+\ell,j,n})\\
\end{array}
\right.\\[2mm]
\end{array}
\right.\\
\text{for}\;i=1,\ldots,m\\
\left\lfloor
\begin{array}{l}
\displaystyle u_{i,n} = \varepsilon_{m+i,n}\Big(\mathsf{J}_{\mathsf{U}_i\mathsf{B}_i^{-1}}\big(v_{i,n}+\mathsf{U}_i\big(\mathsf{M}_i
x_{i,n}- \mathsf{D}_i^{-1}v_{i,n}+d_{i,n})\big)+b_{i,n}\Big)\\
\displaystyle y_{i,n} =
\varepsilon_{i,n}\Big(\mathsf{J}_{\mathsf{W}_i\mathsf{A}_i}\big(x_{i,n}-\mathsf{W}_i(\mathsf{M}_i^* (2u_{i,n}-v_{i,n})+\sum_{(\ell,j) \in \mathbb{V}_i^*}w_{\ell,j,n}+ \mathsf{C}_i x_{i,n} +c_{i,n})\big)+a_{i,n}\Big)\\
v_{i,n+1} = v_{i,n}+\lambda_n \varepsilon_{m+i,n} (u_{i,n}-v_{i,n})\\
x_{i,n+1} = x_{i,n}+\lambda_n \varepsilon_{i,n}\,  (y_{i,n}- x_{i,n})
\end{array}
\right.\\
\text{for}\;\ell=1,\ldots,r\\
\left\lfloor
\begin{array}{l}
\displaystyle v_{m+\ell,n+1} = v_{m+\ell,n}+\frac{\lambda_n}{2} \varepsilon_{2m+\ell,n} (w_{\ell,n}-v_{m+\ell,n})\\
 \eta_{\ell,n} = \max\menge{\varepsilon_{i,n}}{i\in \mathbb{V}_\ell}\\
\displaystyle \overline{x}_{\ell,n+1} = \overline{x}_{\ell,n}+\eta_{\ell,n}\Big(\frac{1}{\kappa_\ell}
\sum_{i\in \mathbb{V}_\ell} x_{i,n+1}-\overline{x}_{\ell,n}\Big),
\vspace*{0.1cm}
\end{array}
\right.
\vspace*{0.1cm}
\end{array}
\right.\\
\end{array}
\end{equation}
and set $(\forall n\in\NN)$ $\EEE_n=\sigma(\boldsymbol{\varepsilon}_n)$ and $\boldsymbol{\XX}_n=
\sigma(\boldsymbol{x}_{n'},\boldsymbol{v}_{n'})_{0\leq n'\leq n}$.
In addition, assume that the following hold:
\begin{enumerate}
\item
\label{c:PDcoord1idist}
$\sum_{n\in\NN}\sqrt{\EC{\|\boldsymbol{a}_n\|^2}
{\boldsymbol{\XX}_n}}<\pinf$,
$\sum_{n\in\NN}\sqrt{\EC{\|\boldsymbol{b}_n\|^2}
{\boldsymbol{\XX}_n}}<\pinf$,
$\sum_{n\in\NN}\sqrt{\EC{\|\boldsymbol{c}_n\|^2}
{\boldsymbol{\XX}_n}}<\pinf$, and
$\sum_{n\in\NN}\sqrt{\EC{\|\boldsymbol{d}_n\|^2}
{\boldsymbol{\XX}_n}}<\pinf$ $\as$
\item \label{c:PDcoord1iidist}
For every $n\in\NN$, $\EEE_n$ and $\XXX_n$ are independent, and
$(\forall i\in\{1,\ldots,m\})$ $\PP[\varepsilon_{i,0}=1] > 0$. 
\item \label{c:PDcoord1iiidist} For every $n\in \NN$,
\begin{equation}\label{incluicPDcoord2iii}
(\forall i\in \{1,\ldots,m\})\qquad \menge{\omega\in \Omega}{\varepsilon_{i,n}(\omega) = 1}
\subset \menge{\omega\in \Omega}{\varepsilon_{m+i,n}(\omega) = 1}
\end{equation}
and
\begin{equation}\label{incluicPDcoord2iiii}
(\forall \ell \in \{1,\ldots,r\})\;\; \bigcup_{i\in \mathbb{V}_\ell} \menge{\omega\in \Omega}{\varepsilon_{i,n}(\omega) = 1}
\subset \menge{\omega\in \Omega}{\varepsilon_{2m+\ell,n}(\omega) = 1}.
\end{equation}
\end{enumerate}
Then, under Assumption \ref{a:distrib}, for every $i\in \{1,\ldots,m\}$, $(x_{i,n})_{n\in\NN}$ converges weakly $\as$ to a 
$\widehat{\mathsf{F}}$-valued random variable $\widehat{x}$ and, for every $\ell\in \{1,\ldots,r\}$, $(\overline{x}_{\ell,n})_{n\in\NN}$ converges weakly $\as$
to $\widehat{x}$.
\end{proposition}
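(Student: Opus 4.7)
The strategy is to cast Problem~\ref{pr:distmon} as an instance of Problem~\ref{prob:main} using the reformulation recorded in Remark~\ref{re:formparallelvectdis} (with $p=m$, $q=m+r$, $\mathsf{B}_{m+\ell}=\mathsf{N}_{\Lambda_{\kappa_\ell}}$, $\mathsf{D}_{m+\ell}=\mathsf{N}_{\{0\}}$), and then instantiate Proposition~\ref{p:algopdmon1sym}. The preconditioners are the given $\mathsf{W}_i$ and $\mathsf{U}_i$ for $i\in\{1,\ldots,m\}$, supplemented by the choice $\mathsf{U}_{m+\ell}=\theta_\ell\,\Id$ for $\ell\in\{1,\ldots,r\}$. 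Since $\mathsf{D}_{m+\ell}^{-1}=0$, the cocoercivity of $\boldsymbol{\mathsf U}^{1/2}\boldsymbol{\mathsf D}^{-1}\boldsymbol{\mathsf U}^{1/2}$ reduces to that on the first $m$ blocks with constant $\nu=\min\{\nu_1,\ldots,\nu_m\}$, while $\boldsymbol{\mathsf W}^{1/2}\boldsymbol{\mathsf C}\boldsymbol{\mathsf W}^{1/2}$ is $\mu$-cocoercive as in Proposition~\ref{p:algopdmon1}.

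I would then bound $\|\boldsymbol{\mathsf U}^{1/2}\boldsymbol{\mathsf L}\boldsymbol{\mathsf W}^{1/2}\|^2$. Splitting $\boldsymbol{\mathsf L}\boldsymbol{\mathsf x}$ into its $(\mathsf M_i\mathsf x_i)_i$ part and its $(\mathsf S_\ell\boldsymbol{\mathsf x})_\ell$ part, and using the definition of $\mathsf S_\ell$ in \eqref{e:defSell} together with $\mathsf{U}_{m+\ell}^{1/2}=\sqrt{\theta_\ell}\Id$,
\begin{equation*}
\|\boldsymbol{\mathsf U}^{1/2}\boldsymbol{\mathsf L}\boldsymbol{\mathsf W}^{1/2}\boldsymbol{\mathsf x}\|^2
\le \sum_{i=1}^m\Big(\|\mathsf U_i^{1/2}\mathsf M_i\mathsf W_i^{1/2}\|^2+\overline\theta_i\|\mathsf W_i\|\Big)\|\mathsf x_i\|^2\le \chi\|\boldsymbol{\mathsf x}\|^2,
\end{equation*}
so $\|\boldsymbol{\mathsf U}^{1/2}\boldsymbol{\mathsf L}\boldsymbol{\mathsf W}^{1/2}\|^2\le\chi$ and assumption~\eqref{e:condvarthetaalphadist} implies the general condition~\eqref{e:condvarthetaalpha} of Proposition~\ref{p:algopdmon1sym} (by monotonicity of $\vartheta_\alpha$ in the operator norm).

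The main technical step, and the likely source of friction, is identifying the iteration~\eqref{e:PDcoord1symdister} with~\eqref{e:PDcoord1sym}. For $k=m+\ell$, $\mathsf{J}_{\mathsf{U}_{m+\ell}\mathsf{B}_{m+\ell}^{-1}}=\prox_{\iota_{\Lambda_{\kappa_\ell}^\perp}}^{\mathsf{U}_{m+\ell}^{-1}}=\Pi_{\Lambda_{\kappa_\ell}^\perp}$ (projection onto a subspace is insensitive to the positive scalar $\theta_\ell^{-1}$). Writing $\Pi_{\Lambda_{\kappa_\ell}^\perp}=\Id-\Pi_{\Lambda_{\kappa_\ell}}$ and noting that $\Pi_{\Lambda_{\kappa_\ell}}$ broadcasts the mean, one obtains $u_{m+\ell,j,n}=v_{m+\ell,j,n}+\theta_\ell x_{\mathsf i(\ell,j),n}-\overline v_{m+\ell,n}-\theta_\ell \overline x_{\ell,n}$ with $\overline v_{m+\ell,n}=\kappa_\ell^{-1}\sum_j v_{m+\ell,j,n}$ and $\overline x_{\ell,n}=\kappa_\ell^{-1}\sum_{i\in\mathbb V_\ell}x_{i,n}$. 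This identifies $w_{\ell,n}=2u_{m+\ell,n}-v_{m+\ell,n}$ and explains the factor $\lambda_n/2$ in the update of $v_{m+\ell,n+1}$, since $v_{m+\ell,n+1}=v_{m+\ell,n}+\lambda_n\varepsilon_{2m+\ell,n}(u_{m+\ell,n}-v_{m+\ell,n})$. The term $\sum_{k\in\mathbb L_i^*}\mathsf L_{k,i}^*(2u_{k,n}-v_{k,n})$ in the primal step then decomposes into $\mathsf M_i^*(2u_{i,n}-v_{i,n})+\sum_{(\ell,j)\in\mathbb V_i^*}w_{\ell,j,n}$, by~\eqref{e:Ladjdis}--\eqref{e:defSs2}. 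One last verification is that the invariant $\overline x_{\ell,n}=\kappa_\ell^{-1}\sum_{i\in\mathbb V_\ell}x_{i,n}$ is propagated: when $\eta_{\ell,n}=0$ none of the $(x_i)_{i\in\mathbb V_\ell}$ is updated, so both sides are unchanged, while when $\eta_{\ell,n}=1$ the algorithm assigns the correct new average.

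It remains to check the probabilistic and error conditions. Condition~\ref{c:PDcoord1iiidist} rephrases Condition~\ref{c:PDcoord1iiisym} of Proposition~\ref{p:algopdmon1sym} under the identification $p+k=m+i$ for the first $m$ dual blocks and $p+k=2m+\ell$ for the constraint blocks, after noting $\mathbb L_i=\{i\}$ and $\mathbb L_{m+\ell}=\mathbb V_\ell$; together with \ref{c:PDcoord1iidist} it delivers the independence and positivity required by Proposition~\ref{p:FBPreconf}. The summability of errors transfers directly to the reformulated problem (the constraint dual blocks carry no error terms). Proposition~\ref{p:algopdmon1sym} therefore yields almost-sure weak convergence of $(\boldsymbol x_n,\boldsymbol v_n)$ to an $\boldsymbol{\mathsf F}\times\boldsymbol{\mathsf F}^*$-valued random variable; by Proposition~\ref{co:formparallelvectdis} each component $x_{i,n}$ converges to the same $\widehat{\mathsf F}$-valued random variable $\widehat x$, and since $\overline x_{\ell,n}$ is the invariant average of $(x_{i,n})_{i\in\mathbb V_\ell}$ it also converges weakly to $\widehat x$.
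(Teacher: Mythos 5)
Your proposal is correct and follows essentially the same route as the paper's own proof: the product-space reformulation of Remark~\ref{re:formparallelvectdis} with $\mathsf{U}_{m+\ell}=\theta_\ell\,\Id$, the identification $\mathsf{J}_{\mathsf{U}_{m+\ell}\mathsf{N}_{\Lambda_{\kappa_\ell}}^{-1}}=\Id-\Pi_{\Lambda_{\kappa_\ell}}$, the rewriting with $w_{\ell,n}=2u_{m+\ell,n}-v_{m+\ell,n}$ (whence the factor $\lambda_n/2$), the bound $\|\boldsymbol{\mathsf U}^{1/2}\boldsymbol{\mathsf L}\boldsymbol{\mathsf W}^{1/2}\|^2\le\chi$ via the diagonal operator $\boldsymbol{\mathsf S}^*\boldsymbol{\mathsf U}_2\boldsymbol{\mathsf S}$, and the appeal to Propositions~\ref{p:algopdmon1sym} and~\ref{co:formparallelvectdis}. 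No gaps.
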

\begin{proof}
By using Proposition \ref{co:formparallelvectdis}, Remark \ref{re:formparallelvectdis}, \eqref{e:defSell}, \eqref{e:defSs1}-\eqref{e:defSs2}, 
setting
\begin{align}\label{e:Utheta}
(\forall \ell \in \{1,\ldots,r\}) \quad &\mathsf{U}_{m+\ell} = \theta_\ell \Id\\
&(\forall n \in \NN)\;\; 
\;\;  b_{m+\ell,n} = d_{m+\ell,n} = 0, \label{e:errbdzero}
\end{align}
and noticing that $\mathsf{J}_{\mathsf{U}_{m+\ell} \mathsf{N}_{\Lambda_{\kappa_\ell}}^{-1}} = \Id-\theta_\ell \Pi_{\Lambda_{\kappa_\ell}}(\cdot/\theta_\ell)
= \Id-\Pi_{\Lambda_{\kappa_\ell}}$ (see \eqref{e:moreaugen}), 
Algorithm \eqref{e:PDcoord1sym} for solving Problem \eqref{e:formparallel} reads
\begin{equation}\label{e:revalgodistproj}
\begin{array}{l}
\text{for}\;n=0,1,\ldots\\
\left\lfloor
\begin{array}{l}
\text{for}\;i=1,\ldots,m\\
\left\lfloor
\begin{array}{l}
\displaystyle u_{i,n} = \varepsilon_{m+i,n}\Big(\mathsf{J}_{\mathsf{U}_i\mathsf{B}_i^{-1}}\big(v_{i,n}+\mathsf{U}_i\big(\mathsf{M}_i
x_{i,n}- \mathsf{D}_i^{-1}v_{i,n}+d_{i,n})\big)+b_{i,n}\Big)\\
v_{i,n+1} = v_{i,n}+\lambda_n \varepsilon_{m+i,n} (u_{i,n}-v_{i,n})
\end{array}
\right.\\
\text{for}\;\ell=1,\ldots,r\\
\left\lfloor
\begin{array}{l}
\displaystyle u_{m+\ell,n} = \varepsilon_{2m+\ell,n}\big(v_{m+\ell,n}+\theta_\ell\, (x_{i,n})_{i\in \mathbb{V}_\ell}-\Pi_{\Lambda_{\kappa_\ell}}(v_{m+\ell,n}+\theta_\ell\, (x_{i,n})_{i\in \mathbb{V}_\ell})\big)\\
v_{m+\ell,n+1} = v_{m+\ell,n}+\lambda_n \varepsilon_{2m+\ell,n} \big(u_{m+\ell,n}-v_{m+\ell,n}\big)
\end{array}
\right.\\
\text{for}\;i=1,\ldots,m\\
\left\lfloor
\begin{array}{l}
\displaystyle y_{i,n} =
\varepsilon_{i,n}\Big(\mathsf{J}_{\mathsf{W}_i\mathsf{A}_i}\big(x_{i,n}-\mathsf{W}_i(\mathsf{M}_i^* (2u_{i,n}-v_{i,n})+\sum_{(\ell,j) \in \mathbb{V}_i^*}( 2u_{m+\ell,j,n}-v_{m+\ell,j,n})\\
\qquad\qquad\qquad\quad + \mathsf{C}_i x_{i,n} +c_{i,n})\big)+a_{i,n}\Big)\\
x_{i,n+1} = x_{i,n}+\lambda_n \varepsilon_{i,n}\,  (y_{i,n}- x_{i,n}).
\end{array}
\right.\\
\end{array}
\right.
\end{array}
\end{equation} 
Making explicit the form of the projections onto the vector spaces $(\Lambda_{\kappa_\ell})_{1\le \ell \le r}$ leads to 
\begin{equation}\label{e:PDcoord1symdist}
\begin{array}{l}
\text{for}\;n=0,1,\ldots\\
\left\lfloor
\begin{array}{l}
\text{for}\;i=1,\ldots,m\\
\left\lfloor
\begin{array}{l}
\displaystyle u_{i,n} = \varepsilon_{m+i,n}\Big(\mathsf{J}_{\mathsf{U}_i\mathsf{B}_i^{-1}}\big(v_{i,n}+\mathsf{U}_i\big(\mathsf{M}_i
x_{i,n}- \mathsf{D}_i^{-1}v_{i,n}+d_{i,n})\big)+b_{i,n}\Big)\\
v_{i,n+1} = v_{i,n}+\lambda_n \varepsilon_{m+i,n} (u_{i,n}-v_{i,n})\\
\end{array}
\right.\\
\text{for}\;\ell=1,\ldots,r\\
\left\lfloor
\begin{array}{l}
\displaystyle \overline{u}_{\ell,n} = \varepsilon_{2m+\ell,n}\kappa_\ell^{-1}
\Big(\sum_{j=1}^{\kappa_\ell} v_{m+\ell,j,n}+ \theta_\ell \sum_{i\in \mathbb{V}_\ell} x_{i,n}\Big)\\
\text{for}\;j=1,\ldots,\kappa_\ell\\
\left\lfloor
\begin{array}{l}
\displaystyle u_{m+\ell,j,n} = \varepsilon_{2m+\ell,n}\big(v_{m+\ell,j,n}+\theta_\ell\, x_{\mathsf{i}(\ell,j),n}-\overline{u}_{\ell,n}\big)\\
\end{array}
\right.\\
v_{m+\ell,n+1} = v_{m+\ell,n}+\lambda_n \varepsilon_{2m+\ell,n} \big(u_{m+\ell,n}-v_{m+\ell,n}\big)
\end{array}
\right.\\
\text{for}\;i=1,\ldots,m\\
\left\lfloor
\begin{array}{l}
\displaystyle y_{i,n} =
\varepsilon_{i,n}\Big(\mathsf{J}_{\mathsf{W}_i\mathsf{A}_i}\big(x_{i,n}-\mathsf{W}_i(\mathsf{M}_i^* (2u_{i,n}-v_{i,n})+\sum_{(\ell,j) \in \mathbb{V}_i^*}( 2u_{m+\ell,j,n}-v_{m+\ell,j,n})\\
\qquad\qquad\qquad\quad + \mathsf{C}_i x_{i,n} +c_{i,n})\big)+a_{i,n}\Big)\\
x_{i,n+1} = x_{i,n}+\lambda_n \varepsilon_{i,n}\,  (y_{i,n}- x_{i,n}).
\end{array}
\right.\\
\end{array}
\right.
\end{array}
\end{equation}
By defining now, for every $n\in \NN$ and $\ell\in \{1,\ldots,r\}$,
\begin{align}
&\overline{x}_{\ell,n} = \frac{1}{\kappa_{\ell}}\sum_{i\in \mathbb{V}_\ell} x_{i,n}, \label{e:defxbar}\\
&w_{\ell,n} = \varepsilon_{2m+\ell,n}(2u_{m+\ell,n}-v_{m+\ell,n}), \\
&\eta_{\ell,n} = \max\menge{\varepsilon_{i,n}}{i\in \mathbb{V}_\ell},\label{e:incluepsilondist1}
\end{align}
and using \eqref{incluicPDcoord2iiii} and the update equation
\begin{equation}
\displaystyle \overline{x}_{\ell,n+1} = \overline{x}_{\ell,n}+\eta_{\ell,n}\Big(\frac{1}{\kappa_\ell}
\sum_{i\in \mathbb{V}_\ell} x_{i,n+1}-\overline{x}_{\ell,n}\Big),
\end{equation}
\eqref{e:PDcoord1symdister} is obtained after
reordering the computation steps in  \eqref{e:PDcoord1symdist}.

In order to apply Proposition~\ref{p:algopdmon1sym}, we shall now show that Condition \eqref{e:condvarthetaalpha}
where $\vartheta_\alpha$ is defined by \eqref{e:varthetalpha} is fulfilled. Let 
\begin{equation}
\boldsymbol{\mathsf{W}}\colon \HH^m \to \HH^m\colon \boldsymbol{\mathsf{x}}\mapsto (\mathsf{W}_i \mathsf{x}_i)_{1\le i \le m}\\
\quad\text{and}\quad  \boldsymbol{\mathsf{U}}\colon \GGG\oplus \HHH \to \GGG\oplus \HHH\colon \boldsymbol{\mathsf{v}}\mapsto (\mathsf{U}_k \mathsf{v}_k)_{1\le k \le m+r}.
\end{equation}
According to Remark \ref{re:formparallelvectdis} and \eqref{e:Utheta}, we have
\begin{equation}
(\forall \boldsymbol{\mathsf{x}} \in \HH^m) \qquad 
\boldsymbol{\mathsf{U}}^{1/2} \boldsymbol{\mathsf{L}} \boldsymbol{\mathsf{W}}^{1/2}\boldsymbol{\mathsf{x}}  = 
({\boldsymbol{\mathsf{U}}}_1^{1/2} \boldsymbol{\mathsf{M}} \boldsymbol{\mathsf{W}}^{1/2}\boldsymbol{\mathsf{x}},
{\boldsymbol{\mathsf{U}}}_2^{1/2} \boldsymbol{\mathsf{S}} \boldsymbol{\mathsf{W}}^{1/2}\boldsymbol{\mathsf{x}})
\end{equation}
where ${\boldsymbol{\mathsf{U}}}_1\colon \GGG \to \GGG\colon (\mathsf{v}_i)_{1\le i \le m}\mapsto (\mathsf{U}_i \mathsf{v}_i)_{1\le i \le m}$
and ${\boldsymbol{\mathsf{U}}}_2\colon \HHH \to \HHH\colon (\mathsf{v}_{m+\ell})_{1\le \ell \le r}\mapsto (\theta_\ell \mathsf{v}_{m+\ell})_{1\le \ell \le r}$.
This allows us to deduce that
\begin{align}
\|\boldsymbol{\mathsf{U}}^{1/2} \boldsymbol{\mathsf{L}} \boldsymbol{\mathsf{W}}^{1/2}\boldsymbol{\mathsf{x}}\|^2
&= \|{\boldsymbol{\mathsf{U}}}_1^{1/2} \boldsymbol{\mathsf{M}} \boldsymbol{\mathsf{W}}^{1/2}\boldsymbol{\mathsf{x}}\|^2
+  \|{\boldsymbol{\mathsf{U}}}_2^{1/2}\boldsymbol{\mathsf{S}} \boldsymbol{\mathsf{W}}^{1/2}\boldsymbol{\mathsf{x}}\|^2\nonumber\\
&= \sum_{i=1}^m \|\mathsf{U}_i^{1/2} \mathsf{M}_i \mathsf{W}_i^{1/2}\mathsf{x}_i\|^2+  
\scal{\boldsymbol{\mathsf{W}}^{1/2}\boldsymbol{\mathsf{x}}}{\boldsymbol{\mathsf{S}}^*{\boldsymbol{\mathsf{U}}}_2\boldsymbol{\mathsf{S}}\boldsymbol{\mathsf{W}}^{1/2}\boldsymbol{\mathsf{x}}}.
\end{align}
By using \eqref{e:defSell} and \eqref{e:defSs1}-\eqref{e:defSs2}, it can be further noticed that 
\begin{equation}\label{e:LsLdis}
\boldsymbol{\mathsf{S}}^*{\boldsymbol{\mathsf{U}}}_2\boldsymbol{\mathsf{S}}\colon \HH^m \to \HH^m \colon (\mathsf{x}_i)_{1\le i \le m} \mapsto
(\overline{\theta}_i \mathsf{x}_i)_{1\le i \le m}
\end{equation}
which yields
\begin{align}
\|\boldsymbol{\mathsf{U}}^{1/2} \boldsymbol{\mathsf{L}} \boldsymbol{\mathsf{W}}^{1/2}\boldsymbol{\mathsf{x}}\|^2
&= \sum_{i=1}^m \|\mathsf{U}_i^{1/2} \mathsf{M}_i \mathsf{W}_i^{1/2}\mathsf{x}_i\|^2+ 
\sum_{i=1}^m \overline{\theta}_i \|\mathsf{W}_i^{1/2}\mathsf{x}_i\|^2\nonumber\\
& \le \sum_{i=1}^m (\|\mathsf{U}_i^{1/2} \mathsf{M}_i \mathsf{W}_i^{1/2}\|^2+ \overline{\theta}_i\|\mathsf{W}_i\|)\|\mathsf{x}_i\|^2
\le \chi \|\boldsymbol{\mathsf{x}}\|^2,
\end{align}
so leading to
$\|\boldsymbol{\mathsf{U}}^{1/2} \boldsymbol{\mathsf{L}} \boldsymbol{\mathsf{W}}^{1/2}\|^2
\le \chi$.
This shows that \eqref{e:condvarthetaalphadist} implies \eqref{e:condvarthetaalpha}.

In addition, 
Condition \ref{c:PDcoord1iiisym} in Proposition \ref{p:algopdmon1sym} translates into 
Condition \ref{c:PDcoord1iiidist} in the present proposition.
It then follows from Propositions \ref{p:algopdmon1sym} and \ref{co:formparallelvectdis}
that, for every $i\in \{1,\ldots,m\}$, $(x_{i,n})_{n\in \NN}$ converges weakly $\as$ to a 
$\widehat{\mathsf{F}}$-valued random variable $\widehat{x}$. 
As a straightforward consequence of \eqref{e:defxbar}, for every $\ell \in \{1,\ldots,r\}$,
$(\overline{x}_{\ell,n})_{n\in \NN}$ also converges weakly $\as$ to $\widehat{x}$.
\end{proof}

\begin{remark}\ \label{re:aldist1}
\begin{enumerate}
\item 
The $n$-th iteration of Algorithm \eqref{e:PDcoord1symdister} basically consists of two kind of operations: the first ones update some of the 
variables $(x_{i,n})_{1\le i \le m}$ and $(v_{i,n})_{1\le i \le m}$ using the operators $(\mathsf{J}_{\mathsf{W}_i\mathsf{A}_i})_{1\le i \le m}$,
$(\mathsf{J}_{\mathsf{U}_i\mathsf{B}_i^{-1}})_{1\le i \le m}$, $(\mathsf{C}_i)_{1\le i \le m}$, and $(\mathsf{D}_i^{-1})_{1\le i \le m}$, while the second
ones can be viewed as merging steps performed on the sets $(\mathbb{V}_\ell)_{1\le \ell \le r}$. 
In this context, a simple choice for the Boolean random variables $(\varepsilon_{k,n})_{m+1\le k \le 2m+r}$ to satisfy Condition \ref{c:PDcoord1iiidist} is:
for every $n\in \NN$,
\begin{align}
&(\forall i\in \{1,\ldots,m\})\qquad \varepsilon_{m+i,n} = \varepsilon_{i,n},\\
&(\forall \ell \in \{1,\ldots,r\})\qquad
\varepsilon_{2m+\ell,n} = \eta_{\ell,n} = \max\menge{\varepsilon_{i,n}}{i\in \mathbb{V}_\ell} \label{e:contepseta}.
\end{align}
\item \label{re:aldist1ii} From \eqref{e:revalgodistproj}, it can be noticed that, for every $n\in \NN$ and $\ell\in \{1,\ldots,r\}$,
$\Pi_{\Lambda_{\kappa_\ell}} u_{m+\ell,n} = 0$, which implies that the following recursive relation holds:
\begin{equation}
\sum_{j=1}^{\kappa_\ell} v_{m+\ell,j,n+1} = (1-\lambda_n \varepsilon_{2m+\ell,n})\sum_{j=1}^{\kappa_\ell} v_{m+\ell,j,n}.
\end{equation}
In particular, if the initial values $(v_{m+\ell,0})_{1\le \ell \le r}$ are chosen such that
\begin{equation}\label{e:initzero}
(\forall \ell \in \{1,\ldots,r\})\qquad \sum_{j=1}^{\kappa_\ell} v_{m+\ell,j,0} = 0,
\end{equation}
then Algorithm \eqref{e:PDcoord1symdister} simplifies to
\begin{equation}\label{e:PDcoord1symdistersimp}
\begin{array}{l}
\text{for}\;n=0,1,\ldots\\
\left\lfloor
\begin{array}{l}
\text{for}\;\ell=1,\ldots,r\\
\left\lfloor
\begin{array}{l}
\text{for}\;j=1,\ldots,\kappa_\ell\\
\left\lfloor
\begin{array}{l}
\displaystyle w_{\ell,j,n} = \varepsilon_{2m+\ell,n}\big(2\theta_\ell\,(x_{\mathsf{i}(\ell,j),n}-\overline{x}_{\ell,n})+v_{m+\ell,j,n})\\
\end{array}
\right.\\[2mm]
\end{array}
\right.\\
\text{for}\;i=1,\ldots,m\\
\left\lfloor
\begin{array}{l}
\displaystyle u_{i,n} = \varepsilon_{m+i,n}\Big(\mathsf{J}_{\mathsf{U}_i\mathsf{B}_i^{-1}}\big(v_{i,n}+\mathsf{U}_i\big(\mathsf{M}_i
x_{i,n}- \mathsf{D}_i^{-1}v_{i,n}+d_{i,n})\big)+b_{i,n}\Big)\\
\displaystyle y_{i,n} =
\varepsilon_{i,n}\Big(\mathsf{J}_{\mathsf{W}_i\mathsf{A}_i}\big(x_{i,n}-\mathsf{W}_i(\mathsf{M}_i^* (2u_{i,n}-v_{i,n})+\sum_{(\ell,j) \in \mathbb{V}_i^*}w_{\ell,j,n}+ \mathsf{C}_i x_{i,n} +c_{i,n})\big)+a_{i,n}\Big)\\
v_{i,n+1} = v_{i,n}+\lambda_n \varepsilon_{m+i,n} (u_{i,n}-v_{i,n})\\
x_{i,n+1} = x_{i,n}+\lambda_n \varepsilon_{i,n}\,  (y_{i,n}- x_{i,n})
\end{array}
\right.\\
\text{for}\;\ell=1,\ldots,r\\
\left\lfloor
\begin{array}{l}
\displaystyle v_{m+\ell,n+1} = v_{m+\ell,n}+\frac{\lambda_n}{2} \varepsilon_{2m+\ell,n} (w_{\ell,n}-v_{m+\ell,n})\\
 \eta_{\ell,n} = \max\menge{\varepsilon_{i,n}}{i\in \mathbb{V}_\ell}\\
\displaystyle \overline{x}_{\ell,n+1} = \overline{x}_{\ell,n}+\eta_{\ell,n}\Big(\frac{1}{\kappa_\ell}
\sum_{i\in \mathbb{V}_\ell} x_{i,n+1}-\overline{x}_{\ell,n}\Big).
\vspace*{0.1cm}
\end{array}
\right.
\vspace*{0.1cm}
\end{array}
\right.\\
\end{array}
\end{equation}
\item Similarly to Remark \ref{re:mon1}\ref{re:mon1iii}, a sufficient condition for \eqref{e:condvarthetaalphadist} to be satisfied
is obtained by setting $\alpha =1$:
\begin{equation}
(1-\sqrt{\chi}) \min\{\mu,\nu\}> \frac12.
\end{equation}
\item When, for every $i\in \{1,\ldots,m\}$, $\mathsf{D}_i^{-1} = \mathsf{0}$, a looser condition is
\begin{equation}
(1-\chi) \mu> \frac12.
\end{equation}
In addition, if $(\forall i\in\{1,\ldots,m\})$ $\mathsf{B}_i = \mathsf{0}$, $(\|\mathsf{M}_i\|)_{1\le i \le m}$
can be chosen as small as desired, so that we can set 
$\chi = \max_{i\in\{1,\ldots,m\}}    \overline{\theta}_i \|\mathsf{W}_i\|$.
In this case, Algorithm \eqref{e:PDcoord1symdistersimp} can be simplified, by noting that, for every $n\in \NN$, the computation of variables $(u_{i,n})_{1 \le i \le m}$ and $(v_{i,n})_{1 \le i \le m}$ becomes useless.
By imposing \eqref{e:contepseta} and \eqref{e:initzero}, and by setting 
\begin{equation}
(\forall n \in \NN)
(\forall \ell \in \{1,\ldots,m\}) \quad \widetilde{v}_{\ell,n} = v_{m+\ell,n},
\end{equation}
we get
\begin{equation}
\begin{array}{l}
\text{for}\;n=0,1,\ldots\\
\left\lfloor
\begin{array}{l}
\text{for}\;\ell=1,\ldots,r\\
\left\lfloor
\begin{array}{l}
 \eta_{\ell,n} = \max\menge{\varepsilon_{i,n}}{i\in \mathbb{V}_\ell}\\
\text{for}\;j=1,\ldots,\kappa_\ell\\
\left\lfloor
\begin{array}{l}
\displaystyle w_{\ell,j,n} = \eta_{\ell,n}\big(2\theta_\ell(x_{\mathsf{i}(\ell,j),n}-\overline{x}_{\ell,n})+\widetilde{v}_{\ell,j,n})\\
\end{array}
\right.\\[2mm]
\end{array}
\right.\\
\text{for}\;i=1,\ldots,m\\
\left\lfloor
\begin{array}{l}
\displaystyle y_{i,n} =
\varepsilon_{i,n}\Big(\mathsf{J}_{\mathsf{W}_i\mathsf{A}_i}\big(x_{i,n}-\mathsf{W}_i(\sum_{(\ell,j) \in \mathbb{V}_i^*}w_{\ell,j,n}+ \mathsf{C}_i x_{i,n} +c_{i,n})\big)+a_{i,n}\Big)\\
x_{i,n+1} = x_{i,n}+\lambda_n \varepsilon_{i,n}\,  (y_{i,n}- x_{i,n})\\
\end{array}
\right.\\
\text{for}\;\ell=1,\ldots,r\\
\left\lfloor
\begin{array}{l}
\displaystyle \widetilde{v}_{\ell,n+1} = \widetilde{v}_{\ell,n}+\frac{\lambda_n}{2} \eta_{\ell,n} (w_{\ell,n}-\widetilde{v}_{\ell,n})\\
\displaystyle \overline{x}_{\ell,n+1} = \overline{x}_{\ell,n}+\eta_{\ell,n}\Big(\frac{1}{\kappa_\ell}
\sum_{i\in \mathbb{V}_\ell} x_{i,n+1}-\overline{x}_{\ell,n}\Big).
\vspace*{0.1cm}
\end{array}
\right.
\vspace*{0.1cm}
\end{array}
\right.\\
\end{array}
\end{equation}
\item An alternative distributed algorithm can be deduced from Proposition~\ref{p:algopdmon1},
which however necessitates, at each iteration $n\in \NN$, to update all the variables 
$(x_{i,n})_{i\in \mathbb{V}_\ell}$ corresponding to the sets $\mathbb{V}_\ell$ with $\ell \in \{1,\ldots,r\}$ which are 
randomly activated.
\end{enumerate}
\end{remark}

\newpage
As an offspring of Proposition \ref{p:algopdmon2}, another form of distributed algorithm is obtained:
\begin{proposition}\label{p:algopdmon2dist}
 Let $(\theta_\ell)_{1\le \ell \le r}$, $(\mathsf{W}_i)_{1\le i \le m}$, $(\mathsf{U}_i)_{1\le i \le m}$, 
$\mu$, $\nu$, and $\chi$ be defined as in Proposition~\ref{p:algopdmon1dist}.
Suppose that 
\begin{equation}\label{e:condvartheta2dist}
\min\big\{\mu,\nu(1-\chi)\big\} > \frac12.
\end{equation}
Let $(\lambda_n)_{n\in\NN}$  be a sequence in $\left]0,1\right]$ such that 
$\inf_{n\in\NN}\lambda_n>0$,
let $\boldsymbol{x}_0$ and 
$(\boldsymbol{c}_n)_{n\in\NN}$ be $\HH^m$-valued random variables,
let $(v_{i,0})_{1\le i \le m}$, $(\boldsymbol{b}_n)_{n\in\NN}$, and 
$(\boldsymbol{d}_n)_{n\in\NN}$ be $\GGG$-valued random 
variables, 
let $(v_{m+\ell,0})_{1\le \ell \le r}$ be a $\HHH$-valued random variable satisfying
\eqref{e:initzero},
and let $(\boldsymbol{\varepsilon}_n)_{n\in\NN}$ be identically distributed 
$\mathbb{D}_{2m+r}$-valued random variables. Iterate
\begin{equation}\label{e:PDcoord2dist}
\begin{array}{l}
\text{for}\;n=0,1,\ldots\\
\left\lfloor
\begin{array}{l}
\text{for}\;i=1,\ldots,m\\
\left\lfloor
\begin{array}{l}
\eta_{i,n} = \max\big\{\varepsilon_{m+i,n},(\varepsilon_{2m+\ell,n})_{\ell\in \{\ell'\in \{1,\ldots,r\}\mid i\in \mathbb{V}_{\ell'}\}}\big\}\\
\displaystyle w_{i,n} =
\eta_{i,n}\big(x_{i,n}-\mathsf{W}_i(\mathsf{C}_i x_{i,n} +c_{i,n})\big)\\
\displaystyle \widetilde{w}_{i,n} =
\eta_{i,n}\big(w_{i,n}-
\mathsf{W}_i(\mathsf{M}_i^*v_{i,n}+\sum_{(\ell,j) \in \mathbb{V}_i^*} v_{m+\ell,j,n})\big)\\
\displaystyle u_{i,n} = \varepsilon_{m+i,n}\Big(\mathsf{J}_{\mathsf{U}_i\mathsf{B}_i^{-1}}\big(v_{i,n}+\mathsf{U}_i(\mathsf{M}_i \widetilde{w}_{i,n}- \mathsf{D}_i^{-1}v_{i,n}+d_{i,n})\big)+b_{i,n}\Big)\\
v_{i,n+1} = v_{i,n}+\lambda_n \varepsilon_{m+i,n} (u_{i,n}-v_{i,n})
\end{array}
\right.\\
\text{for}\;\ell=1,\ldots,r\\
\left\lfloor
\begin{array}{l}
\displaystyle \overline{w}_{\ell,n} = \varepsilon_{2m+\ell,n}\,\frac{\theta_\ell}{\kappa_\ell} \sum_{i\in \mathbb{V}_\ell} \widetilde{w}_{i,n}\\
\text{for}\;j=1,\ldots,\kappa_\ell\\
\left\lfloor
\begin{array}{l}
\displaystyle u_{m+\ell,j,n} = \varepsilon_{2m+\ell,n}\Big(v_{m+\ell,j,n}+\theta_\ell\,
\widetilde{w}_{\mathsf{i}(j,\ell),n}-\overline{w}_{\ell,n}\Big)
\end{array}
\right.\\
v_{m+\ell,n+1} = v_{m+\ell,n}+\lambda_n \varepsilon_{2m+\ell,n} (u_{m+\ell,n}-v_{m+\ell,n})\\
\end{array}
\right.\\
\text{for}\;i=1,\ldots,m\\
\left\lfloor
\begin{array}{l}
\displaystyle  x_{i,n+1} = x_{i,n}+\lambda_n \varepsilon_{i,n}  \Big(w_{i,n}-\mathsf{W}_i \big(\mathsf{M}_i^* u_{i,n}+
\sum_{(\ell,j) \in \mathbb{V}_i^*} u_{m+\ell,j,n}\big)- x_{i,n}\Big),
\end{array}
\right.\vspace*{0.1cm}
\end{array}
\right.
\end{array}
\end{equation}
and set $(\forall n\in\NN)$ $\EEE_n=\sigma(\boldsymbol{\varepsilon}_n)$ and $\boldsymbol{\XX}_n=
\sigma(\boldsymbol{x}_{n'},\boldsymbol{v}_{n'})_{0\leq n'\leq n}$.
In addition, assume that
\begin{enumerate}
\item
\label{c:PDcoord2idist}
$\sum_{n\in\NN}\sqrt{\EC{\|\boldsymbol{b}_n\|^2}
{\boldsymbol{\XX}_n}}<\pinf$,
$\sum_{n\in\NN}\sqrt{\EC{\|\boldsymbol{c}_n\|^2}
{\boldsymbol{\XX}_n}}<\pinf$, and
$\sum_{n\in\NN}\sqrt{\EC{\|\boldsymbol{d}_n\|^2}
{\boldsymbol{\XX}_n}}<\pinf$ $\as$
\end{enumerate}
and Conditions \ref{c:PDcoord1iidist}-\ref{c:PDcoord1iiidist} in Proposition \ref{p:algopdmon1dist} hold.\\
If Assumption \ref{a:distrib} holds and, in Problem~\ref{pr:distmon}, $(\forall i \in \{1,\ldots,m\})$ $\mathsf{A}_i = 0$,
then, for every $i\in \{1,\ldots,m\}$, $(x_{i,n})_{n\in\NN}$ converges weakly $\as$ to a 
$\widehat{\mathsf{F}}$-valued random variable $\widehat{x}$.
\end{proposition}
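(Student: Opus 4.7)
The plan is to mimic, step by step, the strategy used for Proposition \ref{p:algopdmon1dist}, but this time starting from Proposition \ref{p:algopdmon2} rather than Proposition \ref{p:algopdmon1sym}. Since in Problem~\ref{pr:distmon} we now assume $\mathsf{A}_i = 0$ for every $i\in\{1,\ldots,m\}$, Proposition~\ref{co:formparallelvectdis} together with Remark~\ref{re:formparallelvectdis} lets us recast the equivalent problem \eqref{e:formparallelvect} as an instance of Problem~\ref{prob:main} with $p=m$, $q=m+r$, $\HH_j=\HH$, $\GG_{m+\ell}=\HH^{\kappa_\ell}$, $\mathsf{B}_{m+\ell}=\mathsf{N}_{\Lambda_{\kappa_\ell}}$, $\mathsf{D}_{m+\ell}^{-1}=\mathsf{0}$, and crucially $\boldsymbol{\mathsf{A}}=\boldsymbol{\mathsf{0}}$, so the assumptions of Proposition~\ref{p:algopdmon2} are fulfilled.

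The second step is to write out \eqref{e:PDcoord2} for this reformulation after setting $\mathsf{U}_{m+\ell}=\theta_\ell\Id$ and $b_{m+\ell,n}=d_{m+\ell,n}=0$. Using \eqref{e:moreaugen}, we get $\mathsf{J}_{\mathsf{U}_{m+\ell}\mathsf{B}_{m+\ell}^{-1}} = \Id-\Pi_{\Lambda_{\kappa_\ell}}$. The inner forward step $w_{j,n}$ of \eqref{e:PDcoord2} is immediately identified with $w_{i,n}$ of \eqref{e:PDcoord2dist}, while the bracketed expression $w_{j,n}-\mathsf{W}_j\sum_{k'\in\mathbb{L}_j^*}\mathsf{L}_{k',j}^*v_{k',n}$ that appears inside the $u_{k,n}$ update in \eqref{e:PDcoord2} is precisely $\widetilde{w}_{i,n}$ in \eqref{e:PDcoord2dist}, because $\mathbb{L}_i^*=\{i\}\cup\{m+\ell:i\in\mathbb{V}_\ell\}$ and, from \eqref{e:Ladjdis}--\eqref{e:defSs2}, $\mathsf{L}_{m+\ell,i}^*v_{m+\ell,n}$ contributes exactly the terms $v_{m+\ell,j,n}$ for $(\ell,j)\in\mathbb{V}_i^*$. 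Finally, the dual update indexed by $k=m+\ell$ requires evaluating $\Pi_{\Lambda_{\kappa_\ell}}$ on $v_{m+\ell,n}+\theta_\ell(\widetilde{w}_{\mathsf{i}(\ell,j),n})_j$, which is an average over coordinates. Exactly as in Remark~\ref{re:aldist1}\ref{re:aldist1ii}, the initialization \eqref{e:initzero} together with the structure $\Pi_{\Lambda_{\kappa_\ell}}u_{m+\ell,n}=0$ (inherited from $\Id-\Pi_{\Lambda_{\kappa_\ell}}$ applied in \eqref{e:PDcoord2dist}) is preserved along iterations, so the projection collapses to $\overline{w}_{\ell,n}=\frac{\theta_\ell}{\kappa_\ell}\sum_{i\in\mathbb{V}_\ell}\widetilde{w}_{i,n}$, yielding the formula for $u_{m+\ell,j,n}$ in \eqref{e:PDcoord2dist}.

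The third step is to verify the cocoercivity requirement of Proposition~\ref{p:algopdmon2}. With the choice $\mathsf{U}_{m+\ell}=\theta_\ell\Id$, the computation carried out in the proof of Proposition~\ref{p:algopdmon1dist}—namely using \eqref{e:LsLdis} to evaluate $\boldsymbol{\mathsf{S}}^*\boldsymbol{\mathsf{U}}_2\boldsymbol{\mathsf{S}}$—gives $\|\boldsymbol{\mathsf{U}}^{1/2}\boldsymbol{\mathsf{L}}\boldsymbol{\mathsf{W}}^{1/2}\|^2\le\chi$. Combined with the $\mu$-cocoercivity of $\boldsymbol{\mathsf{W}}^{1/2}\boldsymbol{\mathsf{C}}\boldsymbol{\mathsf{W}}^{1/2}$ and the $\nu$-cocoercivity of $\boldsymbol{\mathsf{U}}^{1/2}\boldsymbol{\mathsf{D}}^{-1}\boldsymbol{\mathsf{U}}^{1/2}$ (which here is $\boldsymbol{\mathsf{U}}_1^{1/2}\boldsymbol{\mathsf{D}}^{-1}\boldsymbol{\mathsf{U}}_1^{1/2}$ since $\mathsf{D}_{m+\ell}^{-1}=\mathsf{0}$ can be assigned an arbitrarily large cocoercivity constant), \eqref{e:condvartheta2dist} implies \eqref{e:condvartheta2}. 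The activation conditions \ref{c:PDcoord1iidist}--\ref{c:PDcoord1iiidist} of Proposition~\ref{p:algopdmon1dist} translate directly into Conditions~\ref{c:PDcoord1iisym}--\ref{c:PDcoord1iiisym} in Proposition~\ref{p:algopdmon1sym} required by Proposition~\ref{p:algopdmon2}, because $\mathbb{L}_{m+\ell}=\mathbb{V}_\ell$ and $\mathbb{L}_i^*=\{i\}\cup\{m+\ell:i\in\mathbb{V}_\ell\}$. Convergence of $(x_{i,n})_{n\in\NN}$ for every $i$ to a common $\widehat{\mathsf{F}}$-valued random variable follows from Proposition~\ref{p:algopdmon2} together with Proposition~\ref{co:formparallelvectdis}.

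The main obstacle I expect is purely bookkeeping: carefully rearranging the order of operations in \eqref{e:PDcoord2} so that it matches the three inner loops of \eqref{e:PDcoord2dist}, and tracking the Boolean activations $\eta_{i,n}$ in the light of Condition~\ref{c:PDcoord1iiidist}. In particular, because the argument $\widetilde{w}_{i,n}$ is consumed both by the $u_{i,n}$ update (requiring $\varepsilon_{m+i,n}=1$) and by every $u_{m+\ell,j,n}$ with $(\ell,j)\in\mathbb{V}_i^*$ (requiring $\varepsilon_{2m+\ell,n}=1$), the factor $\eta_{i,n}=\max\{\varepsilon_{m+i,n},(\varepsilon_{2m+\ell,n})_{\ell:\,i\in\mathbb{V}_\ell}\}$ is exactly what is needed; checking that this is consistent with the indexing used in \eqref{e:PDcoord2} is the only non-mechanical part of the argument.
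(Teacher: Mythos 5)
Your proposal is correct and follows exactly the route the paper takes: specialize Algorithm \eqref{e:PDcoord2} of Proposition \ref{p:algopdmon2} to the product-space reformulation of Remark \ref{re:formparallelvectdis} with $\mathsf{U}_{m+\ell}=\theta_\ell\Id$ and vanishing errors in the consensus dual blocks, make the projections $\Pi_{\Lambda_{\kappa_\ell}}$ explicit using \eqref{e:initzero} and the invariance $\sum_j v_{m+\ell,j,n}=0$, and then reuse the bound $\|\boldsymbol{\mathsf U}^{1/2}\boldsymbol{\mathsf{L}}\boldsymbol{\mathsf W}^{1/2}\|^2\le\chi$ and the translation of the activation conditions from the proof of Proposition \ref{p:algopdmon1dist}. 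The paper's own proof stops after displaying the intermediate algorithm and explicitly skips the remaining verifications as being identical to that earlier proof; your write-up supplies precisely those omitted details, including the correct handling of the arbitrarily large cocoercivity constants for $\mathsf{D}_{m+\ell}^{-1}=\mathsf{0}$.
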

\begin{proof}
By choosing $(\mathsf{U}_{m+\ell})_{1\le \ell \le r}$ as in \eqref{e:Utheta} and cancelling some error terms as in \eqref{e:errbdzero}, Algorithm~\eqref{e:PDcoord2} for solving Problem \eqref{e:formparallel}
can be expressed as
\begin{equation}
\begin{array}{l}
\text{for}\;n=0,1,\ldots\\
\left\lfloor
\begin{array}{l}
\text{for}\;i=1,\ldots,m\\
\left\lfloor
\begin{array}{l}
\eta_{i,n} = \max\big\{\varepsilon_{m+i,n},\big(\varepsilon_{2m+\ell,n}\big)_{\ell \in\{\ell'\in \{1,\ldots,r\}\mid i \in \mathbb{V}_{\ell'}\}}\big\}\\
\displaystyle w_{i,n} =
\eta_{i,n}\big(x_{i,n}-\mathsf{W}_i(\mathsf{C}_i x_{i,n} +c_{i,n})\big)\\
\displaystyle \widetilde{w}_{i,n} =
\eta_{i,n}\big(w_{i,n}-
\mathsf{W}_i(\mathsf{M}_i^*v_{i,n}+\sum_{(\ell,j) \in \mathbb{V}_i^*} v_{m+\ell,j,n})\big)\\
\displaystyle u_{i,n} = \varepsilon_{m+i,n}\Big(\mathsf{J}_{\mathsf{U}_i\mathsf{B}_i^{-1}}\big(v_{i,n}+\mathsf{U}_i(\mathsf{M}_i \widetilde{w}_{i,n}- \mathsf{D}_i^{-1}v_{i,n}+d_{i,n})\big)+b_{i,n}\Big)\\
v_{i,n+1} = v_{i,n}+\lambda_n \varepsilon_{m+i,n} (u_{i,n}-v_{i,n})
\end{array}
\right.\\
\text{for}\;\ell=1,\ldots,r\\
\left\lfloor
\begin{array}{l}
\displaystyle u_{m+\ell,n} = \varepsilon_{2m+\ell,n}\big(v_{m+\ell,n}+\theta_\ell\,
(\widetilde{w}_{i,n})_{i\in \mathbb{V}_\ell}-\Pi_{\Lambda_{\kappa_\ell}}(v_{m+\ell,n}+\theta_\ell\,
(\widetilde{w}_{i,n})_{i\in \mathbb{V}_\ell})\big)\\
v_{m+\ell,n+1} = v_{m+\ell,n}+\lambda_n \varepsilon_{2m+\ell,n} (u_{m+\ell,n}-v_{m+\ell,n})
\end{array}
\right.\\
\text{for}\;i=1,\ldots,m\\
\left\lfloor
\begin{array}{l}
\displaystyle  x_{i,n+1} = x_{i,n}+\lambda_n \varepsilon_{i,n}  \Big(w_{i,n}-\mathsf{W}_i \big(\mathsf{M}_i^* u_{i,n}+
\sum_{(\ell,j) \in \mathbb{V}_i^*} u_{m+\ell,j,n}\big)- x_{i,n}\Big).
\end{array}
\right.\vspace*{0.1cm}
\end{array}
\right.
\end{array}
\end{equation}
The rest of the proof is skipped due to its similarity with the proof of Proposition \ref{p:algopdmon1dist}.
\end{proof}
\begin{remark}
When $(\forall i \in \{1,\ldots,m\})$ $\mathsf{D}_i^{-1} = \mathsf{0}$, Condition \eqref{e:condvartheta2dist} can be rewritten as
\begin{equation}
(\forall i\in\{1,\ldots,m\})\qquad  \|\mathsf{U}_i^{1/2} \mathsf{M}_i \mathsf{W}_i^{1/2}\|^2+  \overline{\theta}_i \|\mathsf{W}_i\|<1 \;\;\;\text{and}\;\;\; \mu_i > 1/2.
\end{equation}
\end{remark}

As an illustration of the previous results in this section, let us consider variational problems which can be expressed as follows: 
\begin{problem}
\label{prob:mainfuncdist}
For every 
$i\in\{1,\ldots,m\}$, let 
$\mathsf{f}_i\in \Gamma_0(\HH)$,
let $\mathsf{h}_i\in \Gamma_0(\HH)$ be Lipschitz-differentiable,
let $\mathsf{g}_i\in \Gamma_0(\GG_i)$, 
let $\mathsf{l}_i\in \Gamma_0(\GG_i)$ be strongly convex,
and let $\mathsf{M}_i$ be a nonzero operator in $\BL(\HH,\GG_i)$. 
Suppose that there exists $\overline{\mathsf{x}}\in \HH$
such that 
\begin{equation}\label{e:qualifdist}
0\in
\sum_{i=1}^m \partial\mathsf{f}_i(\overline{\mathsf{x}})+\nabla \mathsf{h}_i(\overline{\mathsf{x}})
+\mathsf{M}_{i}^*(\partial\mathsf{g}_i\infconv\partial\mathsf{l}_i)\big(\mathsf{M}_i\overline{\mathsf{x}}\big).
\end{equation}
Let $\check{\mathsf{F}}$ be the set of solutions to the
problem
\begin{equation}\label{e:primoptdist}
\minimize{\mathsf{x}\in\HH}
{\sum_{i=1}^m \mathsf{f}_i(\mathsf{x})+\mathsf{h}_i(\mathsf{x})+
(\mathsf{g}_i\infconv\mathsf{l}_i)(\mathsf{M}_i\mathsf{x})}.
\end{equation}
Our objective is to find a $\check{\mathsf{F}}$-valued random variable $\widehat{x}$.
\end{problem}
A proximal algorithm for solving Problem \ref{prob:mainfuncdist} which results from Proposition \ref{p:algopdmon1dist} is described next:
\begin{proposition}\label{p:algopdfunc1dist}
Let $(\theta_\ell)_{1\le \ell \le r}$, $(\mathsf{W}_i)_{1\le i \le m}$, $(\mathsf{U}_i)_{1\le i \le m}$, 
and $\chi$
be defined as in Proposition~\ref{p:algopdmon1dist}.
For every $i\in \{1,\ldots,m\}$, let $\mu_i^{-1}\in \RPP$ be a Lipschitz constant of the gradient of
$\mathsf{h}_i \circ \mathsf{W}_i^{1/2}$ and
let $\nu_i^{-1}\in \RPP$ be a Lipschitz constant of the gradient of $\mathsf{l}_i^* \circ \mathsf{U}_i^{1/2}$.
Suppose that \eqref{e:condvarthetaalphadist} holds
where 
$\mu = \min\{\mu_1,\ldots,\mu_m\}$ and $\nu=\min\{\nu_1,\ldots,\nu_m\}$.
Let $(\lambda_n)_{n\in\NN}$  be a sequence in $\left]0,1\right]$ such that 
$\inf_{n\in\NN}\lambda_n>0$,
let $\boldsymbol{x}_0$, 
$(\boldsymbol{a}_n)_{n\in\NN}$, and 
$(\boldsymbol{c}_n)_{n\in\NN}$ be $\HH^m$-valued random variables,
let $(v_{i,0})_{1\le i \le m}$, $(\boldsymbol{b}_n)_{n\in\NN}$, and 
$(\boldsymbol{d}_n)_{n\in\NN}$ be $\GGG$-valued random 
variables, 
let $(v_{m+\ell,0})_{1\le \ell \le r}$ be a $\HHH$-valued random variable satisfying
\eqref{e:initzero},
and let $(\boldsymbol{\varepsilon}_n)_{n\in\NN}$ be identically distributed 
$\mathbb{D}_{2m+r}$-valued random variables. 
Iterate
\begin{equation}\label{e:betterPDdist}
\begin{array}{l}
\text{for}\;n=0,1,\ldots\\
\left\lfloor
\begin{array}{l}
\text{for}\;\ell=1,\ldots,r\\
\left\lfloor
\begin{array}{l}
\text{for}\;j=1,\ldots,\kappa_\ell\\
\left\lfloor
\begin{array}{l}
\displaystyle w_{\ell,j,n} = \varepsilon_{2m+\ell,n}\big(2\theta_\ell\, (x_{\mathsf{i}(\ell,j),n}-\overline{x}_{\ell,n})+v_{m+\ell,j,n})\\
\end{array}
\right.\\[2mm]
\end{array}
\right.\\
\text{for}\;i=1,\ldots,m\\
\left\lfloor
\begin{array}{l}
\displaystyle u_{i,n} = \varepsilon_{m+i,n}\Big(\prox^{\mathsf{U}_i^{-1}}_{\mathsf{g}_i^*}\big(v_{i,n}+\mathsf{U}_i\big(\mathsf{M}_i
x_{i,n}- \nabla \mathsf{l}_i^*(v_{i,n})+d_{i,n})\big)+b_{i,n}\Big)\\
\displaystyle y_{i,n} =
\varepsilon_{i,n}\Big(\prox^{\mathsf{W}_i^{-1}}_{\mathsf{f}_i}\big(x_{i,n}-\mathsf{W}_i(\mathsf{M}_i^* (2u_{i,n}-v_{i,n})+\sum_{(\ell,j) \in \mathbb{V}_i^*}w_{\ell,j,n}+ 
\nabla \mathsf{h}_i(x_{i,n}) +c_{i,n})\big)+a_{i,n}\Big)\\
v_{i,n+1} = v_{i,n}+\lambda_n \varepsilon_{m+i,n} (u_{i,n}-v_{i,n})\\
x_{i,n+1} = x_{i,n}+\lambda_n \varepsilon_{i,n}\,  (y_{i,n}- x_{i,n})
\end{array}
\right.\\
\text{for}\;\ell=1,\ldots,r\\
\left\lfloor
\begin{array}{l}
\displaystyle v_{m+\ell,n+1} = v_{m+\ell,n}+\frac{\lambda_n}{2} \varepsilon_{2m+\ell,n} (w_{\ell,n}-v_{m+\ell,n})\\
 \eta_{\ell,n} = \max\menge{\varepsilon_{i,n}}{i\in \mathbb{V}_\ell}\\
\displaystyle \overline{x}_{\ell,n+1} = \overline{x}_{\ell,n}+\eta_{\ell,n}\Big(\frac{1}{\kappa_\ell}
\sum_{i\in \mathbb{V}_\ell} x_{i,n+1}-\overline{x}_{\ell,n}\Big),
\vspace*{0.1cm}
\end{array}
\right.
\vspace*{0.1cm}
\end{array}
\right.\\
\end{array}
\end{equation}
where $(\overline{x}_{\ell,0})_{1\le \ell \le r}$
is initialized as in Proposition \ref{p:algopdmon1dist}.
In addition, assume that Conditions~\ref{c:PDcoord1idist}-\ref{c:PDcoord1iiidist} in Proposition \ref{p:algopdmon1dist} hold, where
$(\forall n\in\NN)$ $\EEE_n=\sigma(\boldsymbol{\varepsilon}_n)$ and $\boldsymbol{\XX}_n=
\sigma(\boldsymbol{x}_{n'},\boldsymbol{v}_{n'})_{0\leq n'\leq n}$.\\
Then, under Assumption \ref{a:distrib}, for every $i\in \{1,\ldots,m\}$, $(x_{i,n})_{n\in\NN}$ converges weakly $\as$ to a 
$\check{\mathsf{F}}$-valued random variable $\widehat{x}$ and, for every $\ell\in \{1,\ldots,r\}$, $(\overline{x}_{\ell,n})_{n\in\NN}$ converges weakly $\as$
to $\widehat{x}$.
\end{proposition}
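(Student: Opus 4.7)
The plan is to reduce Proposition \ref{p:algopdfunc1dist} to Proposition \ref{p:algopdmon1dist} by the same variational-to-monotone translation that was used to derive Proposition \ref{p:algopdopt1} from Proposition \ref{p:algopdmon1}. Concretely, set, for every $i\in\{1,\ldots,m\}$, $\mathsf{A}_i = \partial \mathsf{f}_i$, $\mathsf{C}_i = \nabla \mathsf{h}_i$, $\mathsf{B}_i = \partial \mathsf{g}_i$, and $\mathsf{D}_i^{-1} = \nabla \mathsf{l}_i^*$. All four operators are maximally monotone since $\mathsf{f}_i,\mathsf{g}_i\in \Gamma_0$, since $\mathsf{h}_i$ is convex and Lipschitz-differentiable, and since $\mathsf{l}_i$ is strongly convex (so that $\mathsf{l}_i^*$ is Lipschitz-differentiable and $\mathsf{D}_i$ is strongly monotone). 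With these identifications, $\mathsf{J}_{\mathsf{W}_i\mathsf{A}_i} = \prox_{\mathsf{f}_i}^{\mathsf{W}_i^{-1}}$ and $\mathsf{J}_{\mathsf{U}_i\mathsf{B}_i^{-1}} = \prox_{\mathsf{g}_i^*}^{\mathsf{U}_i^{-1}}$, so Algorithm \eqref{e:betterPDdist} is exactly Algorithm \eqref{e:PDcoord1symdistersimp} of Remark \ref{re:aldist1}\ref{re:aldist1ii}, applied to these operators.

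Next, I would verify the hypotheses of Proposition \ref{p:algopdmon1dist}. By the Baillon--Haddad theorem, the assumption that $\mathsf{h}_i\circ \mathsf{W}_i^{1/2}$ (resp. $\mathsf{l}_i^*\circ \mathsf{U}_i^{1/2}$) has a $\mu_i^{-1}$-Lipschitz (resp. $\nu_i^{-1}$-Lipschitz) gradient translates into $\mathsf{W}_i^{1/2}\mathsf{C}_i\mathsf{W}_i^{1/2}$ being $\mu_i$-cocoercive and $\mathsf{U}_i^{1/2}\mathsf{D}_i^{-1}\mathsf{U}_i^{1/2}$ being $\nu_i$-cocoercive, which is the required hypothesis. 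The cocoercivity condition \eqref{e:condvarthetaalphadist} is assumed, and the error and sweeping conditions are directly inherited from Conditions \ref{c:PDcoord1idist}--\ref{c:PDcoord1iiidist} in Proposition \ref{p:algopdmon1dist}. Finally, the qualification assumption \eqref{e:qualifdist} yields $\overline{\mathsf{x}}\in \zer\big(\sum_i (\partial \mathsf{f}_i + \nabla \mathsf{h}_i + \mathsf{M}_i^*(\partial \mathsf{g}_i\infconv \partial \mathsf{l}_i)\mathsf{M}_i)\big)$, so the set $\widehat{\mathsf{F}}$ of Problem \ref{pr:distmon} (with the above operator identifications) is nonempty. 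Proposition \ref{p:algopdmon1dist} then ensures that each $(x_{i,n})_{n\in \NN}$ converges weakly $\as$ to a common $\widehat{\mathsf{F}}$-valued random variable $\widehat{x}$, and likewise $(\overline{x}_{\ell,n})_{n\in\NN}\to \widehat{x}$.

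It remains to argue that $\widehat{\mathsf{F}}\subset \check{\mathsf{F}}$, which is where the convex-analytic subdifferential calculus enters, exactly as in the proof of Proposition \ref{p:algopdopt1}. Since $\dom \mathsf{h}_i = \HH$, one has $\partial \mathsf{f}_i + \nabla \mathsf{h}_i = \partial(\mathsf{f}_i+\mathsf{h}_i)$; since $\mathsf{l}_i$ is strongly convex, $\dom \mathsf{l}_i^* = \GG_i$ and $\partial \mathsf{g}_i\infconv \partial \mathsf{l}_i = \partial(\mathsf{g}_i\infconv \mathsf{l}_i)$. Using \eqref{e:qualifdist} together with \cite[Proposition 16.5]{Bauschke_H_2011_book_con_amo} one obtains, for every $\mathsf{x}\in \HH$,
\begin{equation}
\sum_{i=1}^m \Big(\partial \mathsf{f}_i(\mathsf{x}) + \nabla \mathsf{h}_i(\mathsf{x}) + \mathsf{M}_i^*(\partial \mathsf{g}_i\infconv \partial \mathsf{l}_i)(\mathsf{M}_i\mathsf{x})\Big)
\subset \partial\Big(\sum_{i=1}^m \mathsf{f}_i + \mathsf{h}_i + (\mathsf{g}_i\infconv \mathsf{l}_i)\circ \mathsf{M}_i\Big)(\mathsf{x}).
\end{equation}
Fermat's rule then gives $\widehat{\mathsf{F}}\subset \check{\mathsf{F}}$, which yields the claim.

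The proof is essentially a bookkeeping exercise once the correct identifications are made; the only conceptually non-trivial point is the inclusion $\widehat{\mathsf{F}}\subset \check{\mathsf{F}}$, which requires the qualification condition \eqref{e:qualifdist} to ensure that the subdifferential sum rule and the subdifferential of the inf-convolution apply, so that a zero of the separated sum of monotone operators is actually a minimizer of the composite objective in \eqref{e:primoptdist}.
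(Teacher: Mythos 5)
Your proposal is correct and follows essentially the same route as the paper: the same operator identifications $\mathsf{A}_i=\partial\mathsf{f}_i$, $\mathsf{C}_i=\nabla\mathsf{h}_i$, $\mathsf{B}_i=\partial\mathsf{g}_i$, $\mathsf{D}_i^{-1}=\nabla\mathsf{l}_i^*$, the identification of \eqref{e:betterPDdist} with the simplified algorithm of Remark \ref{re:aldist1}\ref{re:aldist1ii}, an appeal to Proposition \ref{p:algopdmon1dist}, and the inclusion $\widehat{\mathsf{F}}\subset\check{\mathsf{F}}$ via \eqref{e:qualifdist} and the subdifferential calculus of \cite[Proposition 16.5]{Bauschke_H_2011_book_con_amo}. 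The paper's proof is terser but identical in substance; your additional bookkeeping (Baillon--Haddad, resolvent/prox identities) merely makes explicit what the paper delegates to the proof of Proposition \ref{p:algopdopt1}.
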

\begin{proof}
For every $i\in \{1,\ldots,m\}$, set $\mathsf{A}_i = \partial \mathsf{f}_i$, $\mathsf{B}_i = \partial \mathsf{g}_i$,
$\mathsf{C}_i = \nabla \mathsf{h}_i$, and $\mathsf{D}_i^{-1} = \nabla \mathsf{l}_i^*$.
In view of \eqref{e:qualifdist} and \cite[Proposition 16.5]{Bauschke_H_2011_book_con_amo}, we have 
\begin{equation} 
0\in
\sum_{i=1}^m \mathsf{A}_i \overline{\mathsf{x}}+\mathsf{C}_i \overline{\mathsf{x}}
+\mathsf{M}_{i}^*(\mathsf{B}_i\infconv\mathsf{D}_i)\big(\mathsf{M}_i\overline{\mathsf{x}}\big)
\subset \partial 
\big(\sum_{i=1}^m \mathsf{f}_i+
\mathsf{h}_i+(\mathsf{g}_i\infconv\mathsf{l}_i)\circ \mathsf{M}_i\big)
(\mathsf{\overline{x}}),
\end{equation}
which shows that $\emp \neq \widehat{\mathsf{F}} \subset \check{\mathsf{F}}$.
This allows us to conclude by applying Proposition \ref{p:algopdmon1dist} and using Remark \ref{re:aldist1}\ref{re:aldist1ii}.
\end{proof}

\begin{remark}\
\begin{enumerate}
\item Alternatively, a second distributed convex optimization algorithm can be deduced from Proposition \ref{p:algopdmon2dist}.
\item If $(\forall i \in \{1,\ldots,m\})$ $\mathsf{g}_i = 0$ and $\mathsf{l}_i = \iota_{\{0\}}$,
$(\forall \ell \in \{1,\ldots,r\})$ $\kappa_\ell = 2$, and \eqref{e:contepseta} holds, then Algorithm~\eqref{e:betterPDdist}
reduces to
\begin{equation}\label{e:bianchi}
\begin{array}{l}
\text{for}\;n=0,1,\ldots\\
\left\lfloor
\begin{array}{l}
\text{for}\;\ell=1,\ldots,r\\
\left\lfloor
\begin{array}{l}
 \eta_{\ell,n} = \max\menge{\varepsilon_{i,n}}{i\in \mathbb{V}_\ell}\\
\displaystyle \widetilde{v}_{\ell,1,n+1} = \widetilde{v}_{\ell,1,n}+\frac{\lambda_n}{2} \eta_{\ell,n} \theta_\ell\, 
(x_{\mathsf{i}(\ell,1),n}-x_{\mathsf{i}(\ell,2),n})\\
\widetilde{v}_{\ell,2,n+1} = - \widetilde{v}_{\ell,1,n+1}\\
\end{array}
\right.\\
\text{for}\;i=1,\ldots,m\\
\left\lfloor
\begin{array}{l}
\displaystyle y_{i,n} =
\varepsilon_{i,n}\Big(\prox^{\mathsf{W}_i^{-1}}_{\mathsf{f}_i}\Big(\big(1-\mathsf{W}_i \overline{\theta}_i\big) x_{i,n}-\mathsf{W}_i\big(\sum_{(\ell,j) \in \mathbb{V}_i^*}(\widetilde{v}_{\ell,j,n}-\theta_\ell\, x_{\mathsf{i}(\ell,\overline{\jmath}),n})+ 
\nabla \mathsf{h}_i(x_{i,n}) +c_{i,n} \big)\Big)\\
\qquad\qquad\quad+a_{i,n}\Big)\\
x_{i,n+1} = x_{i,n}+\lambda_n \varepsilon_{i,n}\,  (y_{i,n}- x_{i,n}),
\end{array}
\right.
\vspace*{0.1cm}
\end{array}
\right.\\
\end{array}
\end{equation}
where we have set  $(\forall j \in \{1,2\})$ $\overline{\jmath} = 3-j$, $(\forall i \in \{1,\ldots,m\})$
$v_{i,0} = 0$, $(\forall n \in \NN)$ 
$\widetilde{\boldsymbol{v}}_n = (v_{m+\ell,n})_{1\le \ell \le r}$, and
$\boldsymbol{b}_n=\boldsymbol{0}$.
The particular case when $\HH$ is an Euclidean space, $(\forall n \in \NN)$ $\lambda_n = 1$, $(\forall \ell \in \{1,\ldots,r\})$ $\theta_\ell = \theta_1$,  
$(\forall i \in \{1,\ldots,m\})$ $\mathsf{W}_i = \tau_i \Id$ with $\tau_i \in \RPP$, and no error term is taken into account
appears to be similar to the distributed iterative scheme developed in \cite{Bianchi_P_2014_stochastic_cdpda}. Then, the sets $(\mathbb{V}_\ell)_{1\le \ell \le r}$ can be viewed as the edges of a connected undirected graph,
the nodes of which are indexed by $i\in \{1,\ldots,m\}$.
\end{enumerate}
\end{remark}

\bibliographystyle{plain}
{\small 
\renewcommand{\baselinestretch}{1.0}

}
\end{document}